\newcommand{\overbar}[1]{\mkern 1.5mu\overline{\mkern-3mu#1\mkern-0.5mu}\mkern 0.5mu}
\newcommand{\dd}{\mathrm{d}}
\newcommand\restr[2]{{
  \left.\kern-\nulldelimiterspace 
  #1 
  \vphantom{\big|} 
  \right|_{#2} 
  }}
\DeclareMathOperator{\lcm}{lcm}
\newcommand{\jump}{\vskip 2mm}
\numberwithin{equation}{section}
\theoremstyle{plain}
\newtheorem{theorem}{Theorem}
\newtheorem{proposition}{Proposition}
\newtheorem{lemma}{Lemma}
\newtheorem{corollary}{Corollary}
\newtheorem*{conjecture}{Conjecture}
\theoremstyle{definition}
\newtheorem{remark}{Remark}
\newtheorem{example}{Example}
\numberwithin{theorem}{section}
\numberwithin{proposition}{section}
\numberwithin{lemma}{section}
\numberwithin{corollary}{section}
\numberwithin{remark}{section}
\newcounter{introduction}
\newtheorem{intro-proposition}[introduction]{Proposition}
\newtheorem{intro-corollary}[introduction]{Corollary}
\newtheorem{intro-theorem}[introduction]{Theorem}
\newtheorem{intro-question}[introduction]{Question}
\title[Topological roots of the Bernstein-Sato polynomial of plane curves]{Topological roots of the Bernstein-Sato polynomial of plane curves}
\author[G. Blanco]{Guillem Blanco}
\thanks{}
\address{Departament de Matem\`atiques\\ Universitat Polit\`ecnica de Catalunya,
Diagonal 647 08028 Barcelona, Spain.}
\email{guillem.blanco@upc.edu}
\begin{document}

\begin{abstract}
We study a set of topological roots of the local Bernstein-Sato polynomial of arbitrary plane curve singularities. These roots are characterized in terms of certain divisorial valuations and the numerical data of the minimal log resolution. In particular, this set of roots strictly contains both the opposites of the jumping numbers in \((0, 1)\) and the poles of the motivic zeta function counted with multiplicity. As a consequence, we prove the multiplicity part of the Strong Monodromy Conjecture for \(n = 2\).
\end{abstract}

\maketitle

\section{Introduction} \label{intro}

Let \( f : (\mathbb{C}^2, x) \longrightarrow (\mathbb{C}, 0) \) be a germ of a holomorphic function defining a germ of a plane curve \( C \), not necessarily reduced or irreducible.

\jump

Any plane curve has a unique minimal log resolution that we denote by \( \pi : (Y, E) \longrightarrow (\mathbb{C}^2, x) \), where \( E \) is the exceptional locus. That is, \( \pi \) is a proper birational morphism satisfying that \( Y \) is a smooth complex manifold, \( \pi \) is an isomorphism outside \( \{x\} \) and \( \pi^{-1} C\) has simple normal crossings. Denote
\[ F_\pi = \textnormal{Div}(\pi^* f) = \sum_{i \in T} N_i E_i, \qquad K_\pi = \textnormal{Div}(\det \textnormal{Jac}\, \pi) = \sum_{i \in T} (k_i - 1) E_i, \]
the total transform divisor and relative canonical divisor, respectively. The numbers \( (N_i, k_i)_{i \in T} \) are usually called the numerical data of the log resolution \( \pi \). The set \( T \) is naturally partitioned as \( T_e \cup T_s \), where \( T_e \) runs over the exceptional components and \( T_s \) runs over the components of the strict transform of \( C \).

\jump

It is a classical result \cite{Bra28} that the embedded topological type of \( C \) is determined by the combinatorics of its minimal log resolution. Conversely, two plane curves having the same combinatorics in the minimal resolution, also known as \emph{equisingular}, have the same topological type, see \cite{Zar32}.

\jump

In this work, we are interested in the behavior of the roots of the local Bernstein-Sato polynomial within a topological class of plane curves. Recall that, in the local case, the Bernstein-Sato polynomial is the monic polynomial of the smallest degree in \( \mathbb{C}[s] \) fulfilling the function equation
\[ P(s) \cdot f^{s+1} = b_{f, x}(s) f^s, \]
where \( P(s) \) is a differential operator with coefficients in the ring \( \mathscr{O}_{\mathbb{C}^2, x} \otimes \mathbb{C}[s] \). The existence of the Bernstein-Sato polynomial is due to Bernstein \cite{Ber72} in the algebraic case and Bj\"ork \cite{Bjo74} in the analytic case. Since \(s = -1\) is always a root of \(b_{f, x}(s)\), one defines the reduced local Bernstein-Sato polynomial \(\widetilde{b}_{f,x}(s) = b_{f,x}(s) / (s + 1)\).

\jump

It is well-known that \( b_{f, x}(s) \) is not a topological invariant, that is, some of its roots can change within a topological class of plane curves, see \cite{Yano78} for some examples. Therefore, a number \( \sigma \in \mathbb{Q}_{<0} \) is called a \emph{topological root} of the (local) Bernstein-Sato polynomial for a topological class \( \mathcal{T} \) of plane curves if for every germ of a curve \( (f^{-1}(0), x) \) in \( \mathcal{T} \), \( \sigma \) is a root of the local Bernstein-Sato polynomial \( b_{f, x}(s) \) of \( f \).

\jump

Given an exceptional divisor \( E_i \) from the minimal log resolution of \( C \) we will denote by \( r_i \) the number of divisors, different from \( E_i \), in the support of \( F_\pi \) that intersect \( E_i \). An exceptional divisor is called \emph{rupture} if \( r_i \geq 3 \). \( E_i \) is called a \emph{satellite} divisor if the center of the blow-up producing \( E_i \) is in the intersection of two exceptional divisors. The combinatorics of the divisors appearing in a log resolution are encoded in a tree-shaped graph \(\Delta_\pi\) called the \emph{dual graph} of \(\pi\), see \cref{dual-graph}.

\jump

Let \( v_i: \mathscr{O}_{\mathbb{C}^2, x} \longrightarrow \mathbb{Z}_{\geq 0} \) be the divisorial valuation associated with \( E_i \) and denote by \( C_i \) a germ of a branch such that its strict transform intersects \( E_i \) transversally at a general point. For any \( g \in \mathscr{O}_{\mathbb{C}^2, x} \), let \( \omega = g \dd x \) and define
\[ \sigma_{i}(\omega) = - \frac{k_i + v_i(g)}{N_i}. \] 
After results of Kashiwara \cite{Kas76} and Lichtin \cite{Lich89}, the rational numbers \( \sigma_i(\omega) \) are a subset of the candidate roots for the local Bernstein-Sato polynomial of \( f \). The set of roots of \(b_{f,x}(s)\) that are of the form \( \sigma_{i}(\omega) \) for some differential form \( \omega \in \Omega^2_{\mathbb{C}^2, x} \) and some exceptional divisor \(E_i, i \in T_e,\) will be called roots of \emph{geometric origin}. 

\jump

Let \( E_{i_j}, i_j \in T, j = 1, \dots, m_i\), be the divisors in the support of \( F_\pi \) that are adjacent to \(E_i\). We define the \emph{residue numbers} of the differential form \(\omega\) at \(E_i\) as
\begin{equation} \label{eq-definition-eps}
    \epsilon_{i_j}(\omega) = N_{i_j} \sigma_i(\omega) + k_{i_j} + v_{i_j}(g), \qquad j = 1, \dots, m_i.
\end{equation}
To avoid distinguishing cases we allow some divisors \(E_{i_j}\) to be zero, in which case the residue numbers are equal to one, see \cref{sec-neigh-valuations} for the exact conventions. The residue numbers appear when studying the residues of archimedean and non-archimedean zeta functions, as well as asymptotic expansions of period integrals in the Milnor fiber; see \cite{Meu83, Lich85, Loe88, Loe90, Vey91, Bla19, Bla21} and \cref{expansions}.

\subsection{Main results}

The main result of this work is to characterize the topological roots of the local Bernstein-Sato polynomial that are of geometric origin in terms of  divisorial valuations associated to rupture divisors and the residue numbers at that divisor.

\jump

The bounds for the residue numbers obtained in the following theorem will be key to characterize the topological roots of geometric origin in \cref{thm-main-2}. The exceptional divisors \(E_{i_1}, E_{i_2}\) in the statement of the theorem denote the divisors intersecting \(E_i\) that are in the paths of \(\Delta_\pi\) connecting \(E_i\) with divisors preceding \(E_i\) in the log resolution, see \cref{sec-neigh-valuations} for the adopted conventions. 

\begin{intro-theorem} \label[theorem]{thm-main-1}
    Let \( E_i \) be an exceptional divisor and let \(\omega \in \Omega^2_{\mathbb{C}^2, x}\) be a top differential form. Then, there always exists \( \omega' \in \Omega_{\mathbb{C}^2, x}^2 \) with \( \sigma_i(\omega') = \sigma_i(\omega) \) and such that
    \[ -1 \leq \epsilon_{i_j}(\omega') \leq 1, \qquad \textnormal{for all} \qquad E_{i_j} \cap E_i \neq \emptyset. \]
    Moreover, \( \epsilon_{i_j}(\omega') = -1 \) only occurs if \( r_i = 1, r_i = 2 \), or \( r_i = 3 \) with \( E_i \) satellite and
    \begin{equation} 
        N_i = \lambda N_i(C_i), \quad N_{i_j} = \lambda N_{i_j}(C_i), \qquad \textnormal{for some}\qquad \lambda \in \mathbb{Z}_{>0}. \tag{\dag}
    \end{equation}
    Similarly, \(\epsilon_{i_j}(\omega') = 1 \) only occurs for \(E_{i_1}\), resp. \( E_{i_2}\), if there are no arrowheads in the connected component of \(\Delta_\pi \setminus \{ E_i \} \) containing \(E_{i_1}\), resp. \(E_{i_2}\).
\end{intro-theorem}

This result can be seen as a generalization of \cite[Prop. II.3.1]{Loe88} which can be recovered by letting \( \omega \) be the standard volume form. The exception for the case \(\epsilon_{i_j}(\omega') = -1\) and \( r_i = 3 \) in the statement of \cref{thm-main-1} can be interpreted as the fact that \( C \) does not look like a power of an irreducible plane curve locally around the exceptional divisor \( E_i \). This can be related to the fact that the monodromy of irreducible plane curves is semisimple \cite{Tra72}. For simplicity, we will refer to this exceptional case by \((\dag)\).

\begin{intro-theorem} \label[theorem]{thm-main-2}
    Let \(E_i\) be a rupture divisor and let \(\omega = g \dd x \in \Omega^2_{\mathbb{C}^2, x}\) be a top differential form such that \(0 \leq v_i(g) < N_i\) and satisfying \cref{thm-main-1}. Then, 
    \begin{enumerate}[label=\roman*)]
        \item \( \sigma_i(\omega) \) is a topological root of \( b_{f, x}(s) \) with multiplicity \( 2 \), if there is at least one \( \epsilon_{i_j}(\omega) = 0 \), for some \(j \in \{ 1, \dots, m_i\}\),
              \vskip 1 mm
        \item \( \sigma_i(\omega) \) is a topological root of \( b_{f, x}(s) \) with multiplicity \( 1 \), if \(\epsilon_{i_j}(\omega) \neq 0\), for \(j = 1, \dots, m_i\), and there are at least three \( \epsilon_{i_j}(\omega) \in \mathbb{Q} \setminus \mathbb{Z} \), for some \(j \in \{ 1, \dots, m_i\}\),
    \end{enumerate}
    and the conclusion is independent of the differential form satisfying \cref{thm-main-1}. Additionally, if \( E_i \) is a non-exceptional divisor with multiplicity \( N_i \), then \( -\frac{1}{N_i},\dots, -\frac{N_i - 1}{N_i}, -1 \) are topological roots of geometric origin of \( b_{f, x}(s) \).
\end{intro-theorem}

 Roots contributed by non-exceptional divisors will be topological by setting $g$ equal to suitable powers of the irreducible factors of $f$. See \cref{rmk-multiplicity} for a discussion about the possible multiplicities of the roots coming from non-exceptional divisors. Similar sets of topological roots of the local Bernstein-Sato polynomial for irreducible plane curves with one or two Puiseux pairs are studied in \cite{CN87} and \cite{ACLM17b}. For a closed expression for the topological roots from \cref{thm-main-2} when the curve is irreducible see \cref{cor-thm-2}.

\jump

The easiest nontrivial example of a topological root of the local Bernstein-Sato polynomial is the opposite of the log canonical threshold of \( f \) at \(x \in \mathbb{C}^2\). More generally, it follows from the birational description of the multiplier ideals, see \cite{Laz04-2} or \cref{sec-multipliers}, that the jumping numbers are always topological invariants of plane curve singularities. The fact that the opposites of the jumping numbers in \((0, 1)\) are always roots of \(b_{f,x}(s)\) follows from the results in \cite{ELSV04, BS05}.

\jump

The Monodromy Conjecture was first formulated by Igusa \cite{Igu88} in the \(p\)-adic setting, and later stated by Denef and Loeser \cite{DL92} for complex varieties. The strong version of the Monodromy Conjecture asserts that the poles of the local motivic zeta function \(Z_{\textnormal{mot}, x}(f; s)\) are roots of \(b_{f, x}(s)\), see \cref{sec-motivic}. The description of \(Z_{\textnormal{mot}, x}(f; s)\) in terms of a log resolution implies that, for plane curves, its poles are topological invariants. After the work of Loeser \cite{Loe88} on the Strong Monodromy Conjecture in dimension 2, the poles of \(Z_{\textnormal{mot}, x}(f;s)\) are always roots of \(b_{f,x}(s)\).

\jump

However, for a general topological class, the intersection of these two sets of topological roots is just the opposite of the log canonical threshold at \(x \in \mathbb{C}^2\). \cref{thm-main-2} gives a unified description of both sets.

\begin{intro-proposition} \label[proposition]{main-prop}
    The set of topological roots of the local Bernstein-Sato polynomial from \cref{thm-main-2} contains both the opposites of the jumping numbers in \( (0, 1) \) and the poles of the local motivic zeta function \( Z_{\textnormal{mot}, x}(f; s) \) counted with multiplicities.
\end{intro-proposition}

The Strong Monodromy Conjecture also predicts that the order of a pole of \(Z_{\textnormal{mot}, x}(f; s)\) is less than or equal to its multiplicity as a root of \(b_{f, x}(s)\). In dimension 2, this second part of the Strong Monodromy Conjecture is not completely settled in \cite{Loe88} if \(f\) is not reduced, see the discussions in \cite{MHTV09, MHTV10} regarding the difference between the complexes of vanishing and nearby cycles for non-isolated singularities. In \cite{Loe88}, when \(f\) is reduced, the singularity at \(x \in \mathbb{C}^2\) is isolated and one can obtain enough information about the roots of \(b_{f,x}(s)\) by directly using the minimal polynomial of the monodromy at \(x\) to construct suitable vanishing cycles in the homology \(H_1(X_t, \mathbb{C})\) of the Milnor fiber. 

\jump

The key point of our approach is that one has to allow relative cycles in \(H_1^{lf}(X_t \setminus \partial X_t, \mathbb{C})\) to detect all multiplicities greater than one. Moreover, we can explicitly construct both these relative cycles and the classical vanishing cycles giving rise to roots of \(b_{f, x}(s)\) with multiplicity greater than one. The characterization of the multiplicity is done by constructing the asymptotic expansions of period integrals along these cycles and without resorting to the local monodromy of \(f\), see \cref{prop-log-cycles}.

\begin{intro-corollary}
    For any plane curve singularity, the order of a pole of \( Z_{\textnormal{mot}, x}(f; s) \) is at most its multiplicity as a root of \(b_{f, x}(s)\).
\end{intro-corollary}

Finally, it is possible to find examples where the set of topological roots from \cref{thm-main-2} are all possible topological roots of the local Bernstein-Sato polynomial, see \cref{examples}. It is then reasonable to ask whether the set from \cref{thm-main-2} contains all possible topological roots of \( b_{f, x}(s) \) for a given topological class, see \cref{the-question}.

\jump

\textbf{Acknowledgments.} 
The author would like to thank Maria Alberich-Carramiñana and Josep Àlvarez Montaner for the fruitful discussions that originated this work. He would also like to thank Wim Veys for several conversations about the results of this work, especially for providing the proof of \cref{lemma-valuations-1} and pointing out the gap in the literature regarding the multiplicity part of the Strong Monodromy Conjecture in dimension \(2\).

\jump

The author was supported by a Postdoctoral Fellowship of the Research Foundation -- Flanders (FWO).

\section{Numerical data of plane branches} \label{semigroup}

For this section let \( f : (\mathbb{C}^2, x) \longrightarrow (\mathbb{C}, 0) \) be a germ of a holomorphic function defining an irreducible plane curve \( C \), also called a plane branch.

\subsection{Puiseux series} \label{sec-puiseux-pairs}

To any plane branch, we associate the set of characteristic exponents \( \{\beta_1/n, \beta_2/n, \dots, \beta_g/n\} \) that are defined from a Puiseux series of \( f \). Let \( s(x) \) be any Puiseux series of \( f \), then 
\begin{equation} \label{puiseux-series}
    s(x) = \sum_{\substack{j \in (e_0)\\ 1 \leq j < \beta_1}} a_j x^{j/n} + \sum_{\substack{j \in (e_1) \\ \beta_1 \leq j < \beta_2}} a_j x^{j/n} + \cdots + \sum_{\substack{j \in (e_{g-1})\\ \beta_{g-1} \leq j < \beta_g}} a_j x^{j/n} + \sum_{j \geq \beta_g} a_j x^{j/n},
\end{equation}
where all coefficients \( a_{\beta_i} \) are different from zero and \( e_i = \gcd(e_{i-1}, \beta_i), e_0 = n \). After an analytic change of coordinates we can always assume \( n < \beta_1 < \cdots < \beta_g \), and \( n \) is then the multiplicity of \( f \) at \( x \). Set \( n_i = e_{i-1}/e_i \), for \( i = 1, \dots, g \), and \( \beta_0 = 0, n_0 = 0 \) by convention. For \(i = 1, \dots, g\), these integers are strictly larger than \( 1 \), and we have that \( e_{i-1} = n_i n_{i+1} \cdots n_g \). In particular, \( n = n_1 \cdots n_g \). The fractions \( m_i / n_1 \cdots n_i \), with \( m_i = \beta_i / e_i \), are called the reduced characteristic exponents. Define
\begin{equation} \label{eq-char2puiseux}
     q_i = m_i - n_i m_{i-1}, \quad \textnormal{for}\ i = 1, \dots, g.
\end{equation}
Then, the pairs \( (q_i, n_i), i = 1, \dots, g \), are called the Puiseux pairs of \( C \), and they satisfy \( \gcd(q_i, n_i) = 1\). A Puiseux series \( s(x) \) of \( f \) can then be written as
\begin{equation*}
    \begin{split}
        s(x) &= \sum_{1 \leq j \leq [q_1/n_1]} a_{0,j} x^i \\
        &+ \sum_{0 \leq j \leq [q_2/n_2]} a_{1,j} x^{(q_1 + j)/n_1} \\
        &+ \sum_{0 \leq j \leq [q_3/n_3]} a_{2,j} x^{q_1/n_1 + (q_2 + j)/n_1n_2} \\
        &\ \, \vdots \\
        &+ \sum_{j \geq 0} a_{g,j} x^{q_1/n_1 + q_2/n_1n_2 + \cdots + (q_g + j)/n_1 \cdots n_g},
    \end{split}
\end{equation*}
where \( a_{j, 0} \neq 0 \) if \( j \neq 0 \). In the sequel, it will be convenient to consider enlarged sets of characteristic exponents containing other non-characteristic exponents of a Puiseux series \eqref{puiseux-series}, possibly corresponding to terms with zero coefficient. We will call them extended sets of characteristic exponents of a branch \(C\). Similarly, from an extended set of characteristic exponents, one defines extended Puiseux pairs of \(C\) from the same rules above.

\subsection{The semigroup of a branch}

Let \( \mathscr{O}_f \) be the local ring of \( f \). A Puiseux parameterization of \( f \) gives an injection \( \mathscr{O}_f \hookrightarrow \mathbb{C} \{t\} \). If we denote the \( t \)-adic valuation of \( \mathscr{O}_f \) by \( v_f \), then the semigroup \( \Gamma \subseteq \mathbb{Z}_{\geq 0} \) associated with \( f \) is
\[ \Gamma = \{v_f(\bar{g}) \in \mathbb{Z}_{\geq 0} \ |\ \bar{g} \in \mathscr{O}_f \setminus \{0\}\}. \]
It is well-known that this \( t \)-adic valuation coincides with the valuation defined by the intersection multiplicity with \( C \), that is, \( v_f(\bar{g}) = \dim_{\mathbb{C}} \mathscr{O}_{\mathbb{C}^2, x}/(g, f) \), see \cite[\S 2.6]{Cas00}. Since \( f \) is irreducible there exists a minimum integer \( c \in \mathbb{Z}_{>0} \), the conductor of \( f \), such that \( (t^c) \cdot \mathbb{C}\{t\} \subseteq \mathscr{O}_f \). As a result, any integer in \( [c, \infty) \) belongs to \( \Gamma \). In other words, \( \mathbb{Z}_{\geq 0} \setminus \Gamma \) is finite. Therefore, we can always find a minimal generating set \( \langle \overline{\beta}_0, \overline{\beta}_1, \dots, \overline{\beta}_g \rangle \) of \( \Gamma \), that is, \( \overline{\beta}_0, \dots, \overline{\beta}_g \) are the minimal integers such that \(\overline{\beta}_i \not\in \langle \overline{\beta}_0, \overline{\beta}_1, \dots, \overline{\beta}_{i-1} \rangle \), for \(j = 1, \dots, g\), and with \( \overline{\beta}_0 < \overline{\beta}_1 < \cdots < \overline{\beta}_g \), \( \gcd(\overline{\beta}_0, \overline{\beta}_1, \dots, \overline{\beta}_g) = 1 \).

\jump

The semigroup generators can be computed from the Puiseux pairs by the following formula \cite[\S II.3]{Zar86},
\begin{equation} \label{eq-char2semi}
    \overline{\beta}_i = n_{i-1} \overline{\beta}_{i-1} - \beta_{i-1} + \beta_i = e_i (n_i n_{i-1} \overline{m}_{i-1} + q_i) , \quad i = 2, \dots, g,
\end{equation}
with \( \overline{\beta}_0 = n \), \( \overline{\beta}_1 = \beta_1 \). By \eqref{eq-char2semi}, \( \gcd(e_{i-1}, \overline{\beta}_i) = e_i \) and \( e_{i-1} \centernot\mid \overline{\beta}_i \). Similarly, as before, we define the sequence of integers \( \overline{m}_i = \overline{\beta}_i / e_i \) called the reduced semigroup generators. Notice that \(n_i\) and \(\overline{m}_i\) remain coprime. The conductor \( c(\Gamma)  \) of \( \Gamma \) can also be computed from the numerical data introduced above, see \cite[\S II.3]{Zar86},
\begin{equation} \label{conductor}
    c(\Gamma) = n_g \overline{\beta}_g - \beta_g - (n - 1).
\end{equation}

The main property of plane branch semigroups that we will use is the following.

\begin{lemma}[{\cite[Lemma 2.2.1]{TeiApp86}}] \label{semigroup-prop}
    If \( \Gamma = \langle \overline{\beta}_0, \overline{\beta}_1, \dots, \overline{\beta}_g \rangle \) is the semigroup of a plane branch, then one has
    \[ n_i \overline{\beta}_i \in \langle \overline{\beta}_0, \overline{\beta}_1, \dots, \overline{\beta}_{i-1} \rangle. \]
    In particular, any \( \gamma \in \Gamma \) has an expression \( \gamma = \gamma_0 \overline{\beta}_0 + \cdots + \gamma_g \overline{\beta}_{g} \) with \( \gamma_i < n_i, i = 1, \dots, g \).
\end{lemma}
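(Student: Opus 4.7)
The plan is to prove the first claim by induction on $i \geq 1$, and to deduce the normal form of the second claim from the first by an elementary reduction. The base case $i = 1$ is immediate: since $n_1 = n/e_1$ and $\overline{\beta}_1 = \beta_1 = m_1 e_1$, we have $n_1 \overline{\beta}_1 = m_1 \overline{\beta}_0 \in \langle \overline{\beta}_0 \rangle$.

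For the inductive step with $i \geq 2$, I will use that $e_i \mid \overline{\beta}_i$ and $n_i e_i = e_{i-1}$ to rewrite $n_i \overline{\beta}_i = e_{i-1}\overline{m}_i$. A short induction using \eqref{eq-char2semi} shows that $e_{i-1} = \gcd(\overline{\beta}_0, \ldots, \overline{\beta}_{i-1})$, so dividing through reduces the claim to showing $\overline{m}_i \in \Gamma'_{i-1} := \langle \overline{\beta}_0/e_{i-1}, \ldots, \overline{\beta}_{i-1}/e_{i-1}\rangle$, a numerical semigroup with $\gcd$ equal to $1$. I would next identify $\Gamma'_{i-1}$ with the semigroup of the plane branch whose Puiseux pairs are $(q_1, n_1), \ldots, (q_{i-1}, n_{i-1})$, which reduces to verifying that \eqref{eq-char2semi} is compatible with truncation, i.e.\ $\overline{\beta}'_j = \overline{\beta}_j/e_{i-1}$ for $j \leq i-1$. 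By formula \eqref{conductor} the conductor of $\Gamma'_{i-1}$ is
\[ c'_{i-1} = n_{i-1}\overline{m}_{i-1} - m_{i-1} - (n_1 \cdots n_{i-1} - 1), \]
and any integer at least $c'_{i-1}$ lies in $\Gamma'_{i-1}$. It therefore suffices to prove $\overline{m}_i \geq c'_{i-1}$, which, using $\overline{m}_i = n_i n_{i-1}\overline{m}_{i-1} + q_i$, is equivalent to
\[ (n_i - 1)\, n_{i-1}\,\overline{m}_{i-1} + q_i + m_{i-1} + n_1 \cdots n_{i-1} - 1 \geq 0, \]
whose left-hand side is a sum of non-negative terms (note $n_i \geq 2$ and $q_i \geq 1$).

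For the second claim, the relations $n_i \overline{\beta}_i = \sum_{j<i} c_{ij}\overline{\beta}_j$ with $c_{ij} \in \mathbb{Z}_{\geq 0}$ provided by the first claim allow an iterative reduction. Given any representation $\gamma = \sum_j \gamma_j \overline{\beta}_j$, whenever some $\gamma_i \geq n_i$ with $i \geq 1$ I substitute this relation to decrease $\gamma_i$ by $n_i$ while only enlarging the coefficients $\gamma_0, \ldots, \gamma_{i-1}$. The process terminates by the well-foundedness of the lexicographic order on $(\gamma_g, \gamma_{g-1}, \ldots, \gamma_1)$ in $\mathbb{Z}_{\geq 0}^g$, which strictly decreases at each step, yielding the required normal form. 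The main obstacle I anticipate is the truncation identification $\Gamma'_{i-1} = \langle \overline{\beta}'_0, \ldots, \overline{\beta}'_{i-1}\rangle$, a combinatorial book-keeping step resting on the multiplicativity of the $e_j$ but essential for invoking the conductor formula \eqref{conductor}.
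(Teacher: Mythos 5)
The paper itself cites this lemma from Teissier's appendix to Zariski's book and does not supply a proof, so there is no internal argument to compare yours against; the only question is whether your blind proof is correct.

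Your argument is correct, and it is essentially the standard proof of this fact. The key observations all check out: $e_{i-1} = \gcd(\overline{\beta}_0,\dots,\overline{\beta}_{i-1})$ follows by the chain of gcd identities noted after \eqref{eq-char2semi}; the truncated Puiseux data $(q_1,n_1),\dots,(q_{i-1},n_{i-1})$ indeed produce semigroup generators $\overline{\beta}'_j = \overline{\beta}_j / e_{i-1}$ because the reduced quantities $m_j,\overline{m}_j$ are unchanged under truncation; the conductor of $\Gamma'_{i-1}$ is then $n_{i-1}\overline{m}_{i-1} - m_{i-1} - (n_1\cdots n_{i-1} - 1)$ by \eqref{conductor}, and $\overline{m}_i = n_i n_{i-1}\overline{m}_{i-1} + q_i$ clears it because $n_i\ge 2$, $q_i\ge 1$, and $\overline{m}_{i-1},n_{i-1}\ge 1$. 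Multiplying back by $e_{i-1}$ lifts $\overline{m}_i\in\Gamma'_{i-1}$ to $n_i\overline{\beta}_i\in\langle\overline{\beta}_0,\dots,\overline{\beta}_{i-1}\rangle$, and the base case $n_1\overline{\beta}_1 = m_1\overline{\beta}_0$ is immediate. Your termination argument for the second claim via lexicographic descent on $(\gamma_g,\dots,\gamma_1)$ is also sound: each rewrite lowers a coordinate of index $i$ while possibly raising only lower-indexed ones (and $\gamma_0$, which is not tracked), and the lex order on $\mathbb{Z}_{\ge 0}^g$ is well-founded. One cosmetic remark: the first part is not really an induction on $i$ — the step for $i\ge 2$ never invokes the case $i-1$, it is a direct conductor estimate for each $i$ — so you could drop the induction framing. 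But that is presentation, not a gap.
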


When the characteristic exponents or, equivalently, the Puiseux pairs have been extended, all the above definitions are still valid except that one obtains generators of the semigroup that are not minimal. In this case, the generators of \(\Gamma\) obtained from non-characteristic exponents via \eqref{eq-char2semi} are redundant, and they will be called extended semigroup generators.

\subsection{Divisorial valuations} \label{valuations}

Let \( \varpi : (Y, E) \longrightarrow (\mathbb{C}^2, x)\) be a proper birational morphism and consider \( E_i \) an exceptional component of the morphism \( \varpi \). If \(C\) is a plane branch such that its strict transform through \( \varpi \) intersects \( E_i \) transversally at a general point the valuation \( v_f \) defined by \( f \) coincides with the divisorial valuation \( v_i : \mathscr{O}_{\mathbb{C}^2, x} \longrightarrow \mathbb{Z}_{\geq 0} \) defined by \( E_i \). Hence, studying exceptional divisorial valuations is equivalent to studying plane branch valuations. 

\jump

We will define next the concept of characteristic exponents and Puiseux pairs of a divisorial valuation \(v_i\). The characteristic exponents of a divisorial valuation \(v_i\) are the characteristic exponents of a branch \(C\) extended by the last exponent that is common to all Puiseux series of any such branch \(C\). The Puiseux pairs of \(v_i\) are defined from the characteristic exponents of \(v_i\) following \cref{sec-puiseux-pairs}.

\subsection{Maximal contact elements} \label{max-contact}

A set of elements \( f_0, \dots, f_{g} \in \mathscr{O}_{\mathbb{C}^2, x} \) such that \( v_i(f_j) = \overline{\beta}_j \), for \( j = 0, \dots, g \), is called a set of maximal contact elements for the plane branch \( C \) or, equivalently, for the divisorial valuation \( v_i: \mathscr{O}_{\mathbb{C}^2, x} \longrightarrow \mathbb{Z}_{\geq 0} \). Maximal contact elements can be constructed by truncating a Puiseux series of \( C \). If \( s(x) \) is the Puiseux series of \( C \) from \eqref{puiseux-series}, then \( f_j, j = 1, \dots, g, \) have Puiseux series
\begin{equation*}
    s_j(x) = \sum_{\substack{k \in (n_1 \cdots n_j)\\ 1 \leq k < n_2 \cdots n_j m_1}} \hspace{-10pt} a_k x^{k/n} + \cdots + \sum_{\substack{k \in (n_j)\\ n_j m_{j-1} \leq k < m_j}} \hspace{-10pt} a_k x^{k/n} + \sum_{k \geq m_j} a'_k x^{k/n},
\end{equation*}
with \( a'_k \in \mathbb{C} \) arbitrary. More generally, maximal contact elements for a divisorial valuation \(v_i: \mathscr{O}_{\mathbb{C}^2, x} \longrightarrow \mathbb{Z}_{\geq 0}\) can be chosen so that their strict transforms intersect either dead-end divisors of the dual graph \(\Delta_{\pi}\) or the strict transforms of the maximal contact elements coincide with arrowheads of \(\Delta_{\pi}\).

\section{Numerical data of divisorial valuations} \label{result-valuations}

The goal of this section is to study the relation between different types of numerical data of adjacent divisorial valuations. Throughout this section \(\pi : (Y, E) \longrightarrow (\mathbb{C}^2, x)\) will be a log resolution, not necessarily minimal, of a plane curve \(C\) defined by \( f \in \mathscr{O}_{\mathbb{C}^2, x}\).

\jump

Recall that the dual graph \( \Delta_\pi \) of the log resolution \( \pi \) has as vertices the exceptional divisors \( E_i, i \in T_e \), which we also denote by \( E_i \) for convenience. Each intersection between exceptional components \( E_i \) is indicated by joining the corresponding vertices with an edge. Finally, the irreducible components of the strict transform of \( C \) are denoted by an arrowhead rooted in the vertex corresponding to the exceptional divisor intersecting each component. We denote by \( \Delta^e_{\pi} \) the restriction of \( \Delta_\pi \) to \( T_e \), that is, we remove the arrowheads.

\subsection{Dual graph decorations} \label{dual-graph}

In the sequel it will be useful to use the language of Eisenbud-Neumann diagrams \cite{EN85} associated with the dual graph \( \Delta_\pi \), see also \cite{NV12}. In these diagrams, the edges are decorated according to the following rules. For each \( i \in T_e \), we attach an edge decoration \( a \) next to \( E_i \) along an edge \( e \) adjacent to \( E_i \) by setting \( a \) equal to the absolute value of the determinant of the intersection matrix of all exceptional divisors appearing in the subgraph \( \Delta_{\pi} \setminus \{E_i\} \) in the direction of the edge \( e \).

\jump

The first relevant property is the so-called edge determinant rule. Fix an edge \( e \) between to vertices \( E_i \) and \( E_j \). Let \( \alpha \) and \( \beta \) the decorations along \( e \) next to \( E_i \) and \( E_j \), respectively. Denote also by \( \alpha_1, \dots, \alpha_n \), respectively \( \beta_1, \dots, \beta_m \), the decorations next to \( E_i \), respectively \( E_j \), along the other edges leaving \( E_i \), respectively \( E_j \).

\begin{center}
    \begin{picture}(500,60)(50,-20)
        \put(240,20){\circle*{4}}
        \put(240,20){\line(1,0){80}}
        \put(320,20){\circle*{4}}
        \put(240,20){\line(-2,-1){30}}
        \put(240,20){\line(-2,1){30}}
        \put(220,23){\makebox(0,0){\(\vdots\)}}
        \put(320,20){\line(2,-1){30}} \put(320,20){\line(2,1){30}}
        \put(337,23){\makebox(0,0){\(\vdots\)}}
        \put(242,-10){\makebox(0,0){\(E_i\)}}
        \put(320,-10){\makebox(0,0){\(E_j\)}}
        \put(250,26){\makebox(0,0){\(\alpha\)}} \put(313,27){\makebox(0,0){\(\beta\)}}
        \put(327,32) {\makebox(0,0){\(\beta_1\)}}
        \put(328,8){\makebox(0,0){\(\beta_m\)}}
        \put(233,30) {\makebox(0,0){\(\alpha_1\)}}
        \put(234,8){\makebox(0,0){\(\alpha_n\)}}
    \end{picture}
\end{center}

Then, the edge determinant rule is
\begin{equation} \label{eq-edgedet}
     \alpha \beta - \prod_{k = 1}^n \alpha_k \prod_{k = 1}^m \beta_k = 1,
\end{equation}
with the usual convention that the empty product equals 1. The second property is that decorations are positive pair-wise coprime integers with at most two of them being greater than 1.

\subsection{Diagram calculus}

The second property related to Eisenbud-Neumann diagrams that will be useful is the so-called diagram calculus that computes the numerical data \( (N_i, k_i)_{i \in T} \) of the log resolution \( \pi \) from the decorations.

\begin{proposition}[{\cite{EN85, NV12}}] \label{prop-calculus}
    For any pair \( E_i, E_j, i, j \in T_e, \) of exceptional divisors, we denote by \( \ell_{ij} \) the product of all the decorations that are adjacent to, but not on, the path in \( \Delta_{\pi} \) joining \( E_i \) and \( E_j \). Then,
    \begin{equation} \label{eq-prop-calculus}
        N_i = \sum_{k \in T_s} \ell_{ik} N_k, \qquad \textnormal{and} \qquad k_i = \sum_{k \in T_e} \ell_{ik} (2 - \delta_k),
    \end{equation}
    where \(\delta_k\) is the valency of \(E_k\) in \(\Delta^e_{\pi}\), that is, the number of exceptional components intersecting \(E_i\).
\end{proposition}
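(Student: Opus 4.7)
The plan is to reduce both formulas in \eqref{eq-prop-calculus} to the single linear-algebraic identity $-(B^{-1})_{ij} = \ell_{ij}$, where $B = (E_i \cdot E_j)_{i,j \in T_e}$ denotes the (negative definite) intersection matrix of the exceptional divisors. Once this identification is available, both equations follow from standard intersection theory on $Y$.

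For the multiplicities, start from $\pi^* f \cdot E_j = 0$ for every $j \in T_e$, which holds since $E_j$ is contracted by $\pi$. Decomposing $\pi^* f = \sum_{i \in T_e} N_i E_i + \sum_{k \in T_s} N_k \tilde{C}_k$ and using that each strict transform component $\tilde{C}_k$ meets a unique exceptional divisor $E_{j(k)}$ transversally gives the linear system
\[ \sum_{i \in T_e} N_i B_{ij} = -\sum_{k : j(k) = j} N_k, \qquad j \in T_e, \]
which inverts to $N_i = -\sum_{k \in T_s} (B^{-1})_{i, j(k)} N_k = \sum_{k \in T_s} \ell_{ik} N_k$, with the convention that for $k \in T_s$ the path underlying $\ell_{ik}$ terminates at $E_{j(k)}$ and the arrowhead itself contributes no decoration. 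For the relative canonical divisor, apply adjunction to $E_j \cong \mathbb{P}^1$: $(K_\pi + E_j)\cdot E_j = -2$. Writing $K_\pi = \sum (k_i - 1)E_i$ and using $\sum_i B_{ij} = B_{jj} + \delta_j$, a short rearrangement produces $\sum_{i \in T_e} k_i B_{ij} = -(2 - \delta_j)$, whence $k_i = \sum_{k \in T_e} \ell_{ik}(2 - \delta_k)$ upon inverting.

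It remains to establish $-(B^{-1})_{ij} = \ell_{ij}$. By Cramer's rule this is a statement about minors of $B$ and rests on two classical ingredients. First, $|\det B| = 1$, which can be proved by induction on the number of blow-ups defining $\pi$: a single point blow-up gives $\det = -1$, and each further point blow-up preserves the absolute value of the determinant. Second, since $\Delta^e_\pi$ is a tree, the off-diagonal minor $\det B_{\hat j, \hat i}$ equals, up to a sign governed by the length of the path $P_{ij}$, the principal minor $\det(B|_{\Delta^e_\pi \setminus P_{ij}})$; this identity for tree matrices follows by iterated cofactor expansion along degree-one vertices of $P_{ij}$. By the very definition of the decorations, this principal minor factors across the connected components of $\Delta^e_\pi \setminus P_{ij}$ and equals $\ell_{ij}$, with the edge-determinant rule \eqref{eq-edgedet} serving as the basic consistency check at the scale of a single edge.

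The main obstacle is precisely the sign bookkeeping in the tree-minor identity, so that the absolute-value-based decorations assemble into $\ell_{ij}$ with exactly the sign that matches $-(B^{-1})_{ij}$; an alternative, and perhaps cleaner, route is to bypass Cramer's rule altogether and prove $-(B^{-1})_{ij} = \ell_{ij}$ by induction on the total number of vertices of $\Delta^e_\pi$, contracting a leaf at each step and using \eqref{eq-edgedet} to update the decorations. Either way, once the identification is established, both formulas in \eqref{eq-prop-calculus} follow at once from the two linear systems above.
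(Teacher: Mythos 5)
The paper does not reprove this statement; it is cited from \cite{EN85, NV12}, so there is no internal proof to compare against. Your derivation is, however, a correct and standard route to the result, and it matches the linear-algebraic reformulation that underlies the diagram calculus in both references.

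The reduction to $-(B^{-1})_{ij} = \ell_{ij}$ is exactly right. The linear system for $(N_i)$ from $\pi^* f \cdot E_j = 0$ is correct, including the bookkeeping $\sum_{k : j(k)=j} N_k$ when several arrowheads attach to the same exceptional vertex and the observation that the decoration on the arrowhead edge is $1$ so $\ell_{ik} = \ell_{i,j(k)}$. The adjunction computation for $(k_i)$ also checks out: $\sum_{i \in T_e} (k_i - 1) B_{ij} + B_{jj} = -2$ together with $\sum_i B_{ij} = B_{jj} + \delta_j$ gives $\sum_i k_i B_{ij} = -(2 - \delta_j)$. The two supporting facts — $|\det B| = 1$ for a composition of point blow-ups (unimodularity of the exceptional lattice), and the factorization $|\det B_{\hat j,\hat i}| = \prod_{C} |\det(B|_C)| = \ell_{ij}$ over the connected components $C$ of $\Delta^e_\pi \setminus P_{ij}$, since that restricted matrix is block-diagonal on a forest — are both correct and both classical.

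The one place where you hedge, the ``sign bookkeeping,'' has a clean resolution you could add: $B$ is symmetric negative definite with $B_{ij} \in \{0,1\}$ for $i \neq j$, so $-B$ is a symmetric $M$-matrix. It is a standard property of (irreducible) $M$-matrices that the inverse has strictly positive entries, so $-(B^{-1})_{ij} > 0$ for all $i,j$. This forces the sign of the off-diagonal cofactor to agree with the product of absolute-value decorations, with no case analysis on the parity of the path length. With that observation in place, the induction-on-leaves alternative you mention becomes unnecessary, though it also works. Overall, the proposal is a faithful sketch of the argument that the cited sources carry out in full.
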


More generally, we notice that the same arguments in the proof of \cref{prop-calculus} show that diagram calculus can also be used to compute the valuations \(v_i(g)\), for all \(i \in T_e\) and \(g \in \mathscr{O}_{\mathbb{C}^2, x}\), assuming the minimal log resolution of \(\{g = 0\}\) is dominated by \(\pi\). Namely, let \(\Delta_\pi(g)\) be the graph obtained from \(\Delta_\pi^e\) by adding the arrowheads corresponding to the strict transform components of \(\pi^* g\), and denote them by \(T_s(g)\), then
\begin{equation} \label{eq-calculus-valuations}
    v_i(g) = \sum_{k \in T_s(g)}\ell_{ik} v_k(g).
\end{equation}
In particular, one recovers the left-hand side of \eqref{eq-prop-calculus} by letting \(N_i = v_i(f)\), \(i \in T_e\).

\begin{lemma} \label{lemma-valuations-1}
    With the notations introduced above, for any pair of adjacent exceptional divisors \( E_i \) and \( E_j \) there exist \( \gamma_0, \gamma_1 \in \mathbb{Z}_{\geq 0} \) such that
    \[ N_i = \alpha \gamma_0 + \alpha_1 \cdots \alpha_n \gamma_1, \qquad \textnormal{and} \qquad N_j = \beta_1\cdots \beta_m \gamma_0 + \beta \gamma_1. \]
    Moreover, \(\gamma_0 = 0\), resp. \(\gamma_1 = 0\), if and only if there are no arrowheads in the connected component of \(\Delta_\pi \setminus \{E_j\}\), resp. \(\Delta_\pi \setminus \{E_i\}\), containing \(E_i\), resp. \(E_j\).
\end{lemma}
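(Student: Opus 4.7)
The plan is to apply \cref{prop-calculus} to both \(N_i\) and \(N_j\), and regroup the resulting sums according to which side of the edge \(e\) between \(E_i\) and \(E_j\) each arrowhead lies. Since \(\Delta_\pi\) is a tree, removing \(e\) partitions \(T_s\) as \(T_s^i \sqcup T_s^j\), where \(T_s^i\) consists of the arrowheads in the connected component of \(\Delta_\pi \setminus \{E_j\}\) containing \(E_i\), and \(T_s^j\) is defined symmetrically.

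The key observation is a divisibility pattern for the \(\ell\)'s. For \(k \in T_s^i\), the unique path from \(E_i\) to \(k\) does not traverse \(e\), so the decoration \(\alpha\) at \(E_i\) along \(e\) is adjacent to but not on the path, forcing \(\alpha \mid \ell_{ik}\). For \(k \in T_s^j\), the path traverses \(e\), so \(\alpha\) is on the path while the remaining decorations \(\alpha_1, \dots, \alpha_n\) at \(E_i\) are adjacent but not on, giving \(\alpha_1 \cdots \alpha_n \mid \ell_{ik}\). Symmetrically, \(\beta_1 \cdots \beta_m \mid \ell_{jk}\) for \(k \in T_s^i\) and \(\beta \mid \ell_{jk}\) for \(k \in T_s^j\). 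This motivates setting
\[ \gamma_0 := \sum_{k \in T_s^i} \frac{\ell_{ik}}{\alpha}\, N_k, \qquad \gamma_1 := \sum_{k \in T_s^j} \frac{\ell_{ik}}{\alpha_1 \cdots \alpha_n}\, N_k, \]
which are non-negative integers by construction, and \cref{prop-calculus} applied to \(N_i\) immediately yields \(N_i = \alpha \gamma_0 + \alpha_1 \cdots \alpha_n \gamma_1\).

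The main step is proving the companion identity for \(N_j\) using the \emph{same} \(\gamma_0, \gamma_1\). This reduces to verifying
\[ \frac{\ell_{jk}}{\beta_1 \cdots \beta_m} = \frac{\ell_{ik}}{\alpha} \quad (k \in T_s^i), \qquad \frac{\ell_{jk}}{\beta} = \frac{\ell_{ik}}{\alpha_1 \cdots \alpha_n} \quad (k \in T_s^j), \]
which follows by observing that on the common portion of the two paths lying strictly beyond \(\{E_i, E_j\}\), the sets of adjacent-but-not-on decorations coincide, while the numerators and denominators account precisely for the decorations at the two endpoints \(E_i\) and \(E_j\). The \emph{moreover} clause is then immediate since every summand in \(\gamma_0, \gamma_1\) is strictly positive, so \(\gamma_0 = 0\) iff \(T_s^i = \emptyset\), and analogously for \(\gamma_1\). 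The only real obstacle is the bookkeeping of on-versus-adjacent decorations in the key observation; once this is settled, the identity falls out of diagram calculus. As a sanity check, one can verify that these \(\gamma_0, \gamma_1\) agree with the formulas \(\gamma_0 = \beta N_i - \alpha_1 \cdots \alpha_n N_j\) and \(\gamma_1 = \alpha N_j - \beta_1 \cdots \beta_m N_i\) forced by the edge determinant rule \eqref{eq-edgedet}, though that algebraic route does not directly yield non-negativity.
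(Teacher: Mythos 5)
Your proof is correct and follows essentially the same route as the paper's: apply \cref{prop-calculus}, split the sum over $T_s$ according to the side of the edge, use the divisibility pattern of the $\ell$'s by the decorations at $E_i$ and $E_j$, and observe that $\ell_{ik}/\alpha = \ell_{jk}/(\beta_1\cdots\beta_m)$ for $k \in T_s^i$ (and symmetrically). Your version is slightly more explicit about why the two ratios coincide and adds the edge-determinant sanity check; the substance is the same, and your statement of the ``moreover'' clause ($\gamma_0 = 0$ iff $T_s^i = \emptyset$) is the correct reading, matching the lemma's statement.
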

\begin{proof}
    Using \cref{prop-calculus}, we can decompose the expressions for \( N_i \) and \( N_j \) as
    \[ N_i = \sum_{k \in T_s^{(i)}} \ell_{ik} N_k + \sum_{k \in T_s^{(j)}} \ell_{ik} N_k, \quad \textnormal{and} \quad N_j = \sum_{k \in T_s^{(i)}} \ell_{jk} N_k + \sum_{k \in T_s^{(j)}} \ell_{jk} N_k, \]
    where \( T_s^{(i)} \), respectively \( T_s^{(j)} \), is the subset of \( T_s \) corresponding to the subgraph of \( \Delta_{\pi} \) rooted at \( E_i \), respectively \( E_j \), that arises after deleting the edge joining \( E_i \) and \( E_j \). Now, for all \( k \in T_s^{(i)} \), we have that \( \ell_{ik} \) is divisible by \( \alpha \), and for all \( k \in T_s^{(j)} \), \(\ell_{ik} \) is divisible by \( \alpha_1 \cdots \alpha_n \). Similarly, \( \ell_{jk} \) is divisible by \( \beta_1 \cdots \beta_m \) for \( k \in T_s^{(i)} \), and by \( \beta \) for \( k \in T_s^{(j)} \). By definition of \( \ell_{ik}, \ell_{jk} \),
    \[ \alpha^{-1} \sum_{k \in T_s^{(i)}} \ell_{ik} N_k = (\beta_1 \cdots \beta_m)^{-1} \sum_{k \in T_s^{(i)}} \ell_{jk} N_k, \quad \textnormal{and} \quad (\alpha_1 \cdots \alpha_m)^{-1} \sum_{k \in T_s^{(j)}} \ell_{ik} N_k = \beta^{-1} \sum_{k \in T_s^{(j)}} \ell_{jk} N_k. \]
    Hence, the lemma follows by letting \( \gamma_0, \gamma_1 \in \mathbb{Z}_{\geq 0} \) be the two quantities above. Finally, the summation defining \(\gamma_0\), respectively \(\gamma_1\), consists only of positive terms, so it is zero if and only if \(T_s^{(j)}\) is empty, respectively \(T_s^{(i)}\) is empty.
\end{proof}

We are indebted to Wim Veys for providing the proof of \cref{lemma-valuations-1}.

\subsection{Splice diagrams}

Let us introduce now the concept of splice diagrams from \cite{EN85}. For the particular case of algebraic links defined by plane curve singularities, the splice diagrams are constructed from Puiseux pairs according to the following rules. To one Puiseux pair \( (q_i, n_i) \) we attach the following decorated diagram,

\begin{center}
    \begin{picture}(500,60)(50,-30)
        \put(200,20){\circle*{4}}
        \put(200,20){\line(1,0){60}}
        \put(260,20){\circle*{4}}
        \put(260,20){\vector(1,0){60}}
        \put(260,20){\line(0,-1){40}}
        \put(260,-20){\circle*{4}}

        \put(269,29){\makebox(0,0){\(1\)}}
        \put(251,29){\makebox(0,0){\(q_i\)}}
        \put(270,10){\makebox(0,0){\(n_i\)}}
    \end{picture}
\end{center}

For a plane branch with Puiseux pairs \( (q_1, n_1), \dots, (q_g, n_g) \), the diagrams for each pair are concatenated in the following way,

\begin{center}
    \begin{picture}(500,60)(50,-30)
        \put(100,20){\circle*{4}}
        \put(100,20){\line(1,0){60}}
        \put(160,20){\circle*{4}}

        \put(169,29){\makebox(0,0){\(1\)}}
        \put(151,29){\makebox(0,0){\(q_1\)}}
        \put(170,10){\makebox(0,0){\(n_1\)}}

        \put(160,20){\line(1,0){60}}
        \put(160,20){\line(0,-1){40}}
        \put(160,-20){\circle*{4}}
        \put(240,20){\makebox(-18,0){\(\cdots\)}}
        \put(242,20){\line(1,0){60}}
        \put(302,20){\circle*{4}}

        \put(312,29){\makebox(0,0){\(1\)}}
        \put(290,29){\makebox(0,0){\(\overline{m}_{g-1}\)}}
        \put(317,10){\makebox(0,0){\(n_{g-1}\)}}

        \put(302,20){\line(0,-1){40}}
        \put(302,-20){\circle*{4}}
        \put(302,20){\line(1,0){60}}
        \put(362,20){\circle*{4}}

        \put(370,29){\makebox(0,0){\(1\)}}
        \put(351,29){\makebox(0,0){\(\overline{m}_{g}\)}}
        \put(372,10){\makebox(0,0){\(n_{g}\)}}

        \put(362,20){\line(0,-1){40}}
        \put(362,-20){\circle*{4}}
        \put(362,20){\vector(1,0){60}}
    \end{picture}
\end{center}

where \( \overline{m}_i, i = 1, \dots, g \), are defined in \cref{semigroup}, see \cite[Prop. 1A.1]{EN85}. For non-irreducible curves, it is enough to understand the case of two branches. Assume the branches have Puiseux pairs \( (q_1, n_1), \dots, (q_r, n_r) \) and \( (q'_1, n'_1), \dots, (q'_s, n'_s) \), and that they intersect up to the \( i \)--th pair, that is, \( (q_1, n_1) = (q'_1, n'_1), \dots, (q_i, n_i) = (q'_i, n'_i) \). Then, the diagrams of the two branches are merged by one of the following configurations

\begin{center}
    \begin{picture}(500,225)(50,-210)


        \put(100,0){\makebox(-18,0){\(\cdots\)}}
        \put(100,0){\line(1,0){60}}
        \put(160,0){\circle*{4}}
        \put(160,0){\line(1,0){60}}
        \put(220,0){\makebox(18,0){\(\cdots\)}}
        \put(160,0){\vector(0,-1){40}}

        \put(170,9){\makebox(0,0){\(1\)}}
        \put(148,9){\makebox(0,0){\(\overline{m}_{i+1}\)}}
        \put(177,-10){\makebox(0,0){\(n_{i+1}\)}}

        \put(80,-40){\makebox(0,0){\((1)\)}}


        \put(280,-40){\makebox(-18,0){\(\cdots\)}}
        \put(280,-40){\line(1,0){60}}
        \put(340,-40){\circle*{4}}

        \put(347,-24){\makebox(0,0){\(1\)}}
        \put(350,-58){\makebox(0,0){\(n_{i+1}\)}}
        \put(327,-30){\makebox(0,0){\(\overline{m}_{i+1}\)}}

        \put(340,-40){\line(3,2){60}}
        \put(400,0){\circle*{4}}
        \put(400,0){\line(0,-1){40}}
        \put(400,0){\line(1,0){60}}
        \put(460,0){\makebox(18,0){\(\cdots\)}}
        \put(400,-40){\circle*{4}}

        \put(410,9){\makebox(0,0){\(1\)}}
        \put(417,-10){\makebox(0,0){\(n_{i+2}\)}}
        \put(380,2){\makebox(0,0){\(\overline{m}_{i+2}\)}}

        \put(340,-40){\line(3,-2){60}}
        \put(400,-80){\circle*{4}}
        \put(400,-80){\line(0,-1){40}}
        \put(400,-80){\line(1,0){60}}
        \put(460,-80){\makebox(18,0){\(\cdots\)}}
        \put(400,-120){\circle*{4}}

        \put(410,-71){\makebox(0,0){\(1\)}}
        \put(417,-90){\makebox(0,0){\(n'_{i+1}\)}}
        \put(380,-82){\makebox(0,0){\(\overline{m}'_{i+1}\)}}

        \put(480,-120){\makebox(0,0){\((2)\)}}


        \put(100,-120){\makebox(-18,0){\(\cdots\)}}
        \put(100,-120){\line(1,0){60}}
        \put(160,-120){\circle*{4}}

        \put(170,-100){\makebox(0,0){\(1\)}}
        \put(148,-111){\makebox(0,0){\(\overline{m}_{i+1}\)}}
        \put(146,-132){\makebox(0,0){\(n_{i+1}\)}}
        \put(170,-140){\makebox(0,0){\(1\)}}

        \put(160,-120){\line(0,-1){40}}
        \put(160,-160){\circle*{4}}
        \put(160,-120){\line(3,2){60}}
        \put(220,-80){\circle*{4}}
        \put(220,-80){\line(0,-1){40}}
        \put(220,-80){\line(1,0){60}}
        \put(280,-80){\makebox(18,0){\(\cdots\)}}

        \put(230,-71){\makebox(0,0){\(1\)}}
        \put(238,-90){\makebox(0,0){\(n_{i+2}\)}}
        \put(200,-77){\makebox(0,0){\(\overline{m}_{i+2}\)}}

        \put(220,-120){\circle*{4}}
        \put(160,-120){\line(3,-2){60}}
        \put(220,-160){\circle*{4}}
        \put(220,-160){\line(0,-1){40}}
        \put(220,-160){\line(1,0){60}}
        \put(280,-160){\makebox(18,0){\(\cdots\)}}
        \put(220,-200){\circle*{4}}

        \put(230,-151){\makebox(0,0){\(1\)}}
        \put(238,-170){\makebox(0,0){\(n'_{i+2}\)}}
        \put(200,-160){\makebox(0,0){\(\overline{m}'_{i+2}\)}}

        \put(80,-160){\makebox(0,0){\((3)\)}}
    \end{picture}
\end{center}
The first diagram covers the case \(s = i\). The second is the case \(q_{i+1}/n_{i+1} < q'_{i+1}/n'_{i+1}\). For the last diagram, \(q_{i+1}/n_{i+1} = q'_{i+1}/n'_{i+1}\) but the corresponding coefficients in the Puiseux series are different. 

\jump

By \cite[Thm. 20.1]{EN85}, the splice diagrams can be constructed from the decorated dual graphs by only keeping all the edges between the vertices of valency greater or equal to 3 or supporting arrowheads. In \cite{EN85} there are some extra signs that we are disregarding by taking the absolute value of the determinants when defining the decorations of the dual graphs.

\subsection{Neighboring divisorial valuations} \label{sec-neigh-valuations}
Let \(E_i, i \in T_e,\) be an exceptional divisor. The neighboring valuations to the valuation associated with \( E_i \) are those associated with other divisors \( E_{i_j}, i_j \in T, \) such that \( E_i \cap E_{i_j} \neq \emptyset \). A neighborhood of \(E_i\) in dual graph \(\Delta_\pi\) looks like

\begin{center}
    \begin{picture}(500,90)(0,-60)
        \put(160,23){\makebox(0,0){\(\vdots\)}}
        \put(180,20){\line(-2,-1){30}}
        \put(180,20){\line(-2,1){30}}
        \put(180,20){\circle*{4}}
        \put(180,20){\line(1,0){20}}
        \put(210,20){\makebox(0,0){\(\dots\)}}
        \put(220,20){\line(1,0){20}}
        \put(240,20){\circle*{4}}
        \put(240,20){\line(1,0){30}}
        \put(270,20){\circle*{4}}
        \put(270,20){\line(2,-1){30}}
        \put(270,20){\line(2,1){30}}
        \put(290,23){\makebox(0,0){\(\vdots\)}}

        \put(270,20){\line(0,-1){30}}
        \put(270,-10){\circle*{4}}
        \put(270,-10){\line(0,-1){20}}
        \put(270,-35){\makebox(0,0){\(\vdots\)}}

        \put(268,32){\makebox(0,0){\(E_i\)}}
        \put(238,08){\makebox(0,0){\(E_{i_2}\)}}
        \put(179,32){\makebox(0,0){\(E_0\)}}
        \put(285,-10){\makebox(0,0){\(E_{i_1}\)}}
        \put(315,38){\makebox(0,0){\(E_{i_3}\)}}
        \put(315,2){\makebox(0,0){\(E_{i_{m_i}}\)}}
    \end{picture}
\end{center}

Denote the divisors adjacent to \(E_i\) by \(E_{i_1}, E_{i_3}, \dots, E_{i_{m_i}}\), for a total of \(m_i \geq 3\) divisors. In the sequel, we will always use the convention that \(E_{i_1} \) and \(E_{i_2}\) are the exceptional divisors on the paths connecting \(E_i\) with divisors preceding it in the log resolution. Moreover, \(E_{i_2}\) will be on the path connecting \(E_i\) with the first exceptional divisor \(E_0\). It is possible that there is no divisor \(E_{i_1}\) in which case we will set \(E_{i_1} = 0\). The only case where there is no divisor \(E_{i_2}\), and we set \(E_{i_2} = 0\), is if \(E_i = E_0\). Notice that, in particular, \(m_i \geq r_i\).

\jump

Using the definitions from \cref{valuations}, assume that the valuation associated with \(E_i\) has \(g\) characteristic exponents or Puiseux pairs. The associated Puiseux pairs will be denoted by \((q_1, n_1), \dots, (q_g, n_g)\). We will next construct a set of extended characteristic exponents, or Puiseux pairs, for the neighboring valuations of \(E_i\).

\jump

The valuations of \(E_{i_j}, j = 1, \dots, m_i, \) share the first \(g-1\) characteristic exponents, or Puiseux pairs, with the valuation of \(E_i\). If the valuations of \(E_{i_1}\) or \(E_{i_2}\) have exactly \(g-1\) characteristic exponents, we will trivially extend its set of characteristic exponents with the exponent \(\frac{0}{1}\). Equivalently, in terms of Puiseux pairs, we will add the trivial pair \((0, 1)\). With this convention, the last pair in these extended sets of Puiseux pairs will always be denoted by \((b_g, a_g)\) and \((d_g, c_g)\) for \(E_{i_1}\) and \(E_{i_2}\), respectively.

\jump

Similarly, for any \(E_{i_j}, j \in \{3, \dots, m_i\}\), we will use the following set of extended characteristic exponents for their associated valuations. Take the set of \(g\) characteristic exponents of \(E_i\) and extend it with the last exponent of the set of characteristic exponents of the valuation of \(E_{i_j}\). In terms of Puiseux pairs, the last pair in this particular extended set for the valuation of \(E_{i_j}\) will be denoted by \((q_{g+1,j}, n_{g+1,j})\).

\jump

With these definitions, we have the following description of the decorations of the dual graph \(\Delta_\pi\) in a neighborhood of \(E_i\).

\begin{lemma} \label{lemma-valuations-2}
    The decorations from \cref{lemma-valuations-1} for \( E_{i_1} \) equal
    \[ \alpha = n_g, \qquad \alpha_1 \cdots \alpha_n = \overline{m}_g, \qquad \beta_1 \cdots \beta_m = a_g, \qquad \beta = a_g n_{g-1} \overline{m}_{g-1} + b_g. \]
    Similarly, for \( E_{i_2} \) the decorations are
    \[ \alpha = \overline{m}_g, \qquad \alpha_1 \cdots \alpha_n = n_g, \qquad \beta_1 \cdots \beta_m = c_g n_{g-1} \overline{m}_{g-1} + d_g, \qquad \beta = c_g. \]
    Finally, for any \(E_{i_j}, j \in \{3, \dots, m_i\}\), the decorations are
    \[ \alpha = 1, \qquad \alpha_1 \cdots \alpha_n = n_g \overline{m}_g, \qquad \beta_1 \cdots \beta_m = n_{g+1,j}, \qquad \beta = n_{g+1,j} n_g \overline{m}_g + 1, \]
    with \(q_{g+1,j} = 1\).
\end{lemma}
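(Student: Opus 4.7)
The plan is to combine the splice-diagram construction from Puiseux pairs (described just before the statement) with the correspondence \cite[Thm.~20.1]{EN85} between splice-diagram and dual-graph decorations at rupture divisors. Since $E_i$ is rupture, its three canonical splice-diagram decorations at the node corresponding to the $g$-th Puiseux pair $(q_g, n_g)$ of $v_i$---namely $\overline{m}_g$ on the edge back toward $E_0$, $n_g$ on the satellite edge, and $1$ on each forward edge---coincide with the decorations of $\Delta_\pi$ at $E_i$.

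First, I read off the decorations next to $E_i$ by matching each neighbor $E_{i_j}$ with a direction at the node: by the conventions set up in \cref{sec-neigh-valuations}, the edge to $E_{i_2}$ lies along the path toward $E_0$, so $\alpha = \overline{m}_g$; the edge to $E_{i_1}$ is the satellite direction, so $\alpha = n_g$; and each edge to $E_{i_j}$ with $j \geq 3$ points forward, so $\alpha = 1$. The products $\alpha_1 \cdots \alpha_n$ of the remaining decorations at $E_i$ then equal $n_g$, $\overline{m}_g$, and $n_g \overline{m}_g$ respectively, since every forward edge contributes the factor $1$.

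For the decorations at each $E_{i_j}$ I use the extended Puiseux-pair data of the valuation $v_{i_j}$. When $j \geq 3$, $v_{i_j}$ extends $v_i$ by the pair $(1, n_{g+1,j})$, so by the splice-diagram concatenation rule $E_{i_j}$ becomes a new node whose back-decoration, computed by \cref{eq-char2semi}, is $\overline{m}_{g+1,j} = n_{g+1,j} n_g \overline{m}_g + 1$, giving $\beta$; the remaining two decorations at that node are $n_{g+1,j}$ and $1$, giving $\beta_1 \cdots \beta_m = n_{g+1,j}$. For $E_{i_1}$ (respectively $E_{i_2}$), the valuation $v_{i_1}$ (resp. $v_{i_2}$) shares the first $g-1$ pairs of $v_i$ but has extended $g$-th pair $(b_g, a_g)$ (resp. $(d_g, c_g)$); applying \cref{eq-char2semi} to this extended set and identifying which of the three directions at the node of $E_{i_1}$ (resp. $E_{i_2}$) points back to $E_i$ yields $\beta = a_g n_{g-1}\overline{m}_{g-1} + b_g$ and $\beta_1 \cdots \beta_m = a_g$ for $E_{i_1}$, and $\beta = c_g$ and $\beta_1 \cdots \beta_m = c_g n_{g-1}\overline{m}_{g-1} + d_g$ for $E_{i_2}$.

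The main obstacle is the last identification for $E_{i_1}$ and $E_{i_2}$, whose valuations precede $v_i$ in the log resolution. The extended $g$-th pairs $(b_g, a_g)$ and $(d_g, c_g)$ are constrained by the geometry: they correspond to the two rational approximations of $q_g/n_g$ adjacent to it in the continued-fraction expansion underlying the resolution, and the geometric fact that $E_i$ is obtained by blowing up $E_{i_1} \cap E_{i_2}$ when $E_i$ is satellite forces the coprimality relations $n_g b_g - a_g q_g = 1$ and $c_g q_g - n_g d_g = 1$. These relations are precisely what is needed for the computed decorations to satisfy the edge-determinant rule \cref{eq-edgedet}, which provides a final consistency check.
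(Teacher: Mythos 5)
Your overall strategy matches the paper's: translate the problem into the language of splice diagrams via \cite[Thm.~20.1]{EN85}, read off the decorations at $E_i$ as $n_g$, $\overline{m}_g$, and $1$, and then use the semigroup recursion \eqref{eq-char2semi} to compute the decorations at the neighbors. However, there is a genuine circularity in the third case. You write ``When $j \geq 3$, $v_{i_j}$ extends $v_i$ by the pair $(1, n_{g+1,j})$'', and then apply \eqref{eq-char2semi} to deduce $\beta = n_{g+1,j} n_g \overline{m}_g + 1$. But the extended pair attached to $E_{i_j}$ in \cref{sec-neigh-valuations} is defined to be $(q_{g+1,j}, n_{g+1,j})$ with $q_{g+1,j}$ \emph{a priori} unknown, and the equality $q_{g+1,j} = 1$ is part of the statement you are asked to prove. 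You have therefore assumed one of the conclusions. The paper goes in the opposite direction: attaching an arrowhead to $E_{i_j}$ puts the splice diagram in the first merging configuration, which gives $\alpha = 1$ and $\beta_1\cdots\beta_m = n_{g+1,j}$ directly; the edge determinant rule \eqref{eq-edgedet} then \emph{computes} $\beta = n_{g+1,j} n_g \overline{m}_g + 1$, and only afterwards does comparison with \eqref{eq-char2semi} force $q_{g+1,j} = 1$.

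This also points to a structural issue with your treatment of the edge determinant rule. You relegate it to a ``final consistency check'' and try to get the coprimality relations $n_g b_g - a_g q_g = 1$, $c_g q_g - n_g d_g = 1$ from a separate geometric argument about the blow-up creating $E_i$. That geometric reasoning is not wrong, but the paper never needs it: the edge determinant rule \emph{is} the tool that produces the unknown decoration $\beta$ once $\alpha$, $\alpha_1\cdots\alpha_n$, and $\beta_1\cdots\beta_m$ are pinned down by the splice-diagram configuration. If you use it as a derivation rather than a check, the $q_{g+1,j}=1$ gap closes automatically and the $E_{i_1}$, $E_{i_2}$ cases need no extra geometric input beyond identifying the correct merging configuration. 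One smaller remark: you open with ``Since $E_i$ is rupture,'' but the lemma is stated for an arbitrary exceptional divisor; when $r_i \leq 2$ the third clause is vacuous and the first two are still needed, so the rupture hypothesis should not be assumed.
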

\begin{proof}
    By the previous discussion the decorations surrounding \( E_i \) in the dual graph that are different from \( 1 \) are precisely \( n_g \) and \( \overline{m}_g \). We will do the case of \(E_{i_1}\), the case for \(E_{i_2}\) being similar. Notice that attaching an arrowhead to \(E_{i_1}\) in the dual graph \(\Delta_{\pi}\) does not change the decorations. Moreover, by computing the resulting splice diagram one obtains the configuration in the second case above. Hence, the result follows from \eqref{eq-char2semi}.
    
    \jump
    
    For the case of a divisor \(E_{i_j}, j \in \{3, \dots, m_i\}\), attaching an arrowhead to \(E_{i_j}\) in \(\Delta_\pi\) does not change the decorations. Computing the splice diagrams, one obtains the configuration in the first case above. Hence, \(\alpha = 1\) and the edge determinant rule gives \(\beta = \beta_1 \cdots \beta_m \overline{m}_g n_g + 1\) with \(\beta_1 \cdots \beta_m = n_{g+1,j}\). It then follows from \eqref{eq-char2semi} that \(q_{g+1,j} = 1\).
\end{proof}

The previous results allow us to express the numerical data for \(E_i\) and its neighboring divisors in terms of the numerical data of a branch. 

\begin{lemma} \label{lemma-valuations-3}
Let \(C_i\) be a plane branch whose strict transform intersects an exceptional divisor \(E_i, i \in T_e, \) at a general point. Then, 
\[
    N_i(C_i) = n_g \overline{m}_g, \quad N_{i_1}(C_i) = a_g \overline{m}_g, \quad N_{i_2}(C_i) = n_g(c_g n_{g-1} \overline{m}_{g-1} + d_g).
\]
Moreover,
\[
    k_i = n_1 \cdots n_g + m_g, \quad k_{i_1} = n_1 \cdots n_{g-1} a_g + a_g m_{g-1} + b_g, \quad k_{i_2} = n_1 \cdots n_{g-1} c_g + c_g m_{g-1} + d_g,
\]
and \[k_{i_j} = n_1 \cdots n_{g} n_{g+1,j} + n_{g+1,j} m_{g} + 1, \qquad \textnormal{for} \qquad j = 3, \dots, m_i.\]
\end{lemma}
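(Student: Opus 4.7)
The plan is to compute both sides using the diagram calculus of \cref{prop-calculus} together with the explicit local decorations of $\Delta_\pi$ around $E_i$ provided by \cref{lemma-valuations-2}.

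First I would treat the multiplicities. Since $C_i$ is a smooth curvette meeting $E_i$ transversally at a general point, attaching the arrowhead for $C_i$ to $E_i$ in $\Delta_\pi$ yields a new edge whose two endpoint decorations are forced to be $1$ by pairwise coprimality and the edge determinant rule \eqref{eq-edgedet}. Specializing \eqref{eq-calculus-valuations} to this augmented graph, whose only arrowhead corresponds to $C_i$ with $v_{C_i}(C_i) = 1$, gives $N_k(C_i) = \ell_{k, C_i}$ for each $k \in \{i, i_1, i_2\}$, where $\ell_{k, C_i}$ is the product of the decorations adjacent to but not on the path joining $E_k$ to the arrowhead. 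Reading these off from \cref{lemma-valuations-2}: at $E_i$ the non-trivial decorations away from the arrowhead are $n_g$ and $\overline{m}_g$, yielding $N_i(C_i) = n_g \overline{m}_g$; for $E_{i_1}$ one picks up the additional factors $\overline{m}_g$ (at $E_i$, away from both $E_{i_1}$ and the arrowhead) and $a_g$ (at $E_{i_1}$, away from $E_i$), giving $N_{i_1}(C_i) = a_g \overline{m}_g$; and analogously $N_{i_2}(C_i) = n_g(c_g n_{g-1} \overline{m}_{g-1} + d_g)$.

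For the log discrepancies I would exploit the fact that $k_i$ is an intrinsic invariant of the divisorial valuation $v_i$ and therefore depends only on its Puiseux data. The proof then reduces to establishing the master identity
\[
k \;=\; n_1 \cdots n_g + m_g
\]
for any divisorial valuation on $(\mathbb{C}^2, x)$ with (possibly extended) Puiseux pairs $(q_1, n_1), \dots, (q_g, n_g)$, where $m_g = n_g m_{g-1} + q_g$ is the last reduced characteristic exponent. Once this is known, each of the four claimed formulas follows by substituting the appropriate (extended) Puiseux data for $v_i$, $v_{i_1}$, $v_{i_2}$, and $v_{i_j}$ as read off from \cref{lemma-valuations-2}: for example, $v_{i_1}$ has last extended pair $(b_g, a_g)$, so its multiplicity is $n_1 \cdots n_{g-1} a_g$ and its last reduced exponent is $a_g m_{g-1} + b_g$, which reproduces the claimed expression for $k_{i_1}$; the cases $k_{i_2}$ and $k_{i_j}$ are entirely parallel, with the last extended pair being $(d_g, c_g)$ and $(1, n_{g+1,j})$ respectively.

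The master identity itself I would prove by induction on $g$, using the diagram calculus formula $k = \sum_{k \in T_e} \ell_{ik}(2 - \delta_k)$ applied to a minimal log resolution of a curvette for $v$. Only valency-one and valency-at-least-three vertices contribute, and the splice diagram description of a plane branch with $g$ Puiseux pairs organizes the sum into contributions indexed by the successive Puiseux pairs. The base case $g = 1$ is a direct Hirzebruch--Jung computation on the resolution of $y^{n_1} = x^{q_1}$, and the inductive step relates $k$ at level $g$ to $k$ at level $g-1$ by inserting the T-configuration for the new pair $(q_g, n_g)$ into the splice diagram and tracking how the decorations, and hence the numbers $\ell_{ik}$, update. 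The main obstacle is precisely this telescoping bookkeeping: although the final identity $k = n_1 \cdots n_g + m_g$ is clean, the diagram calculus sum mixes contributions from all exceptional divisors across the entire resolution, and one must verify that the splice diagram recursion $m_g = n_g m_{g-1} + q_g$ together with the multiplicative recursion on the multiplicities reproduces exactly the increment from one inductive level to the next.
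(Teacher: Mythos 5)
Your plan mirrors the paper's proof almost step for step: the $N_\bullet(C_i)$ formulas come from \cref{prop-calculus} applied to the graph with the single arrowhead for $C_i$, reading off the decorations from \cref{lemma-valuations-2}; and the $k_\bullet$ formulas are reduced to the single identity $k = n_1\cdots n_g + m_g$ for the relevant (possibly extended) Puiseux data, after first using that $k_i$ is an invariant of the valuation so one may take $\pi$ to be the minimal resolution of $C_i$ (plus the extra blow-ups to reach $E_i$). The one place you diverge is in how that identity is established: where you propose an induction on $g$ and flag the bookkeeping as the main obstacle, the paper instead evaluates the diagram-calculus sum in closed form, $k_i = \overline{m}_g + \sum_{j=0}^{g-1} n_g\overline{m}_j - \sum_{j=1}^{g} n_g n_j \overline{m}_j$, using that only $\delta_k\in\{1,2,3\}$ occur, and then collapses it by applying the semigroup recursion \eqref{eq-char2semi} repeatedly — exactly the recursion your inductive step would need, so your route would indeed go through.
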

\begin{proof}
Since there is only one arrowhead attached to \(E_{i}\), using \cref{prop-calculus,lemma-valuations-2} one obtains the formulas for \(N_i(C_i), N_{i_1}(C_i)\), and \(N_{i_2}(C_i)\). For the second part of the lemma, it is enough to assume that \(\pi\) is the minimal resolution of \(C_i\) plus, possibly, some extra blow-ups performed on \(C_i\) to reach the exceptional divisor \(E_i\). Then, since \(\delta_i = -1, 0, 1\), for all \(i \in T_e\), \cref{prop-calculus} implies
\[
 k_i = \overline{m}_g + \sum_{j=0}^{g-1} n_g \overline{m}_j - \sum_{j=1}^g n_g n_{j} \overline{m}_j = n_1\cdots n_g + m_g,
\]
where the last equality follows from applying \eqref{eq-char2semi} recursively. The formulas for \(k_{i_j}, j = 1, \dots, m_i \), follow from the one for \(k_i\), \cref{eq-char2semi} and the definition of the extended Puiseux pairs for the divisorial valuations associated with \(E_{i_j}\).

\end{proof}

\section{Proof of Theorem A} \label{section-eps}

In this section, we will work with a fixed exceptional divisor \( E_i, i \in T_e, \) coming from the minimal log resolution \( \pi \) of an arbitrary plane curve \(C\) defined by \(f : (\mathbb{C}^2, x) \longrightarrow (\mathbb{C}, 0)\). We keep using the notations and conventions from previous sections. Let \(\omega \in \Omega^2_{\mathbb{C}^2, x}\) be a top differential form. The residue numbers \(\epsilon_{i_j}(\omega), j = 1, \dots, m_i,\) at \(E_i\) satisfy the following relation. This result is a generalization of \cite[Prop. II.3.1]{Loe88}, see also \cite{Vey91}, which only covers the standard volume form case.

\begin{lemma} \label{lemma-sumeps}
    Let \( E_i \) be an exceptional divisor and \(\omega \in \Omega^2_{\mathbb{C}^2, x}\) such that \(F_\pi + \textnormal{Div}(\pi^* \omega)\) is an SNC divisor. Then, 
    \[ \epsilon_{i_1}(\omega) + \cdots + \epsilon_{i_{m_i}}(\omega) + \delta_{i_1}(\omega) + \cdots + \delta_{i_p}(\omega) = -2 + m_i, \]
    where \( \delta_{i_1}(\omega),\dots , \delta_{i_p}(\omega) \) are the multiplicities of the irreducible components of \( \textnormal{Div}(\pi^* \omega) \) not in \(\textnormal{Supp}(F_\pi)\) that intersect \(E_i\).
\end{lemma}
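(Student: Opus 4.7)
The plan is to cast the identity as an intersection-theoretic relation on $Y$. I introduce the $\mathbb{Q}$-divisor
\[
    D := \textnormal{Div}(\pi^* \omega) + \sigma_i(\omega) F_\pi.
\]
Writing $\omega = g\,\dd x \wedge \dd y$, one has $\textnormal{Div}(\pi^* \omega) = K_\pi + \pi^*\textnormal{Div}(g)$, and the defining relation $\sigma_i(\omega) N_i = -(k_i + v_i(g))$ forces the multiplicity of $D$ along $E_i$ to equal $-1$. The same calculation shows that $D$ has multiplicity $\epsilon_{i_j}(\omega) - 1$ along every adjacent divisor $E_{i_j}$ in $\textnormal{Supp}(F_\pi)$, using the convention $k_{i_j} = 1$ for non-exceptional $E_{i_j}$ so that the formula also applies to strict transform components of $C$. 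Every component $F_k$ of $\textnormal{Div}(\pi^* \omega)$ lying outside $\textnormal{Supp}(F_\pi)$ is necessarily non-exceptional and appears in $D$ with multiplicity $\delta_{i_k}(\omega)$.

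The central step is to evaluate $(D + E_i) \cdot E_i$ in two different ways. For the first, the projection formula and the fact that $E_i$ is $\pi$-exceptional force $F_\pi \cdot E_i = 0$ and $\pi^* \textnormal{Div}(g) \cdot E_i = 0$, so $D \cdot E_i = K_\pi \cdot E_i$. Since $E_i \cong \mathbb{P}^1$ and $K_Y = \pi^* K_X + K_\pi$, adjunction on the smooth surface $Y$ gives $K_\pi \cdot E_i = -2 - E_i^2$, and therefore
\[
    (D + E_i) \cdot E_i \;=\; D \cdot E_i + E_i^2 \;=\; -2.
\]

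For the second evaluation, $D + E_i$ has multiplicity zero along $E_i$, so its intersection with $E_i$ is determined by the remaining components of its support. Under the SNC hypothesis each such component meets $E_i$ transversally in at most one point, so
\[
    (D + E_i) \cdot E_i \;=\; \sum_{j=1}^{m_i} (\epsilon_{i_j}(\omega) - 1) + \sum_{k=1}^{p} \delta_{i_k}(\omega).
\]
Equating the two expressions for $(D+E_i)\cdot E_i$ and rearranging yields the claimed identity; in the placeholder convention $E_{i_j} = 0$ with $\epsilon_{i_j}(\omega) = 1$ the corresponding terms contribute zero, which keeps the neighbour count consistent.

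I do not foresee a serious obstacle. The only point requiring care is the unified bookkeeping for exceptional and strict-transform components of $F_\pi$ adjacent to $E_i$ inside the coefficient $\epsilon_{i_j}(\omega) - 1$, which is handled by adopting $k_{i_j} = 1$ for non-exceptional $E_{i_j}$; everything else is a direct application of the projection formula and adjunction.
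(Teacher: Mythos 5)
Your proof is correct and takes essentially the same approach as the paper: both compute the intersection number of the $\mathbb{Q}$-divisor $\sigma_i(\omega)F_\pi + \textnormal{Div}(\pi^*\omega)$ with $E_i$ once via adjunction on $Y$ together with the projection formula (giving $-E_i^2-2$), and once by expanding the intersection over the components of its support (giving $\sum(\epsilon_{i_j}(\omega)-1) + \sum\delta_{i_k}(\omega) - E_i^2$), then equate. Your addition of $E_i$ to kill the multiplicity $-1$ along $E_i$ so that $E_i^2$ drops out entirely is a purely cosmetic variant of the same computation; the rest, including the handling of the placeholder convention $\epsilon_{i_j}(\omega)=1$ when $E_{i_j}=0$ and the use of $k_{i_j}=1$ for strict-transform components, matches the paper's proof.
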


\begin{proof}
    Consider the \( \mathbb{Q} \)-divisor \( \sigma_{i}(\omega) F_{\pi} + \textnormal{Div}(\pi^* \omega) \) and compute its intersection multiplicity with the divisor \( E_i \). By the adjunction formula for surfaces, this intersection number is equal to \( -E_i^2 - 2 \). On the other hand,
    \[ (\sigma_{i}(\omega) F_{\pi} + \textnormal{Div}(\pi^* \omega))\cdot E_i = \sum_{j=1}^{m_i} (\epsilon_{i_j}(\omega) - 1) + \sum_{j=1}^p \delta_{i_j}(\omega) - E_i^2. \]
    Combining both computations, and using that \(\epsilon_{i_j}(\omega) = 1\) if \(E_{i_j}\) is zero, gives the result.
\end{proof}

The differential forms \(\omega \in \Omega^2_{\mathbb{C}^2, x}\) that will be considered throughout the rest of this work will be as follows. For each vertex \(E_j, j\in T,\) of the dual graph \(\Delta_\pi\), let \(f_j \in \mathscr{O}_{\mathbb{C}^2, x}\) be an equation of a branch \(C_j\) whose strict transform intersects the divisor \(E_j\) at a general point. When \(j \in T_s\), that is, \(E_j\) is an arrowhead, this just means that \(f_j\) is a branch of \(f\). Then,
\[ \omega = g \dd x, \qquad g = \prod_{r_j \neq 2} f_j^{\gamma_j}, \qquad \gamma_j \in \mathbb{Z}_{\geq 0},\]
where the product ranges over all divisors \(E_j\) with valency different from \(2\) in the dual graph \(\Delta_\pi\). After \cref{max-contact}, this set of elements contains a set of maximal contact elements for any divisorial valuation \(E_k, k \in T_e\). Moreover, all these differential forms satisfy \cref{lemma-sumeps} with \(\delta_{i_1}(\omega) = \gamma_i, \) if \(r_i \neq 2\), and \(\delta_{i_1}(\omega) = 0\) otherwise.

\subsection{Formulae for the residue numbers}
The goal of this section is to give closed formulas for the residue numbers at \(E_i\) in terms of the numerical data of the associated valuation.

\begin{lemma} \label{lemma-linear-valuations}
Let \(E_{i_1}, \dots, E_{i_{m_i}}\) be the divisors adjacent to \(E_i\). Then, there exist \(\beta_1, \dots, \beta_{m_i} \in \mathbb{Z}_{\geq 0}\) such that
\[
N_{i_1} = \frac{a_g}{n_g} N_i + \frac{\beta_1}{n_g}, \quad N_{i_2} = \frac{c_g n_{g-1} \overline{m}_{g-1} + d_g}{\overline{m}_g} N_i + \frac{\beta_2}{\overline{m}_g}, \quad N_{i_j} = n_{g+1,j} N_i + \beta_{j},\ j = 3, \dots, m_i,
\]
where \(\beta_j\), for \(j = 1, \dots, m_i, \) is zero if and only if there are no arrowheads in the connected component of \(\Delta_\pi \setminus \{E_i\}\) containing \(E_{i_j}\). Moreover, 
\[
    k_{i_1} = \frac{a_g}{n_g} k_i + \frac{1}{n_g}, \qquad k_{i_2} = \frac{c_g}{n_g} k_i + \frac{1}{n_g}, \qquad k_{i_j} = n_{g+1,j} k_i + 1,\ j = 3, \dots, m_i.
\]
\end{lemma}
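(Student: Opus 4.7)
The plan is to derive both sets of formulas from the decomposition introduced in the proof of \cref{lemma-valuations-1}, specialized to the decorations supplied by \cref{lemma-valuations-2}; the \(k\)-formulas will additionally draw on the explicit closed forms of \cref{lemma-valuations-3}. First, for each \(j\), I would apply \cref{lemma-valuations-1} to the adjacent pair \(E_i, E_{i_j}\), obtaining \(\gamma_0, \gamma_1 \in \mathbb{Z}_{\geq 0}\) with \(N_i = \alpha\gamma_0 + \alpha_1\cdots\alpha_n\gamma_1\) and \(N_{i_j} = \beta_1\cdots\beta_m\gamma_0 + \beta\gamma_1\). Eliminating \(\gamma_0\) between these two identities and invoking the edge determinant rule \eqref{eq-edgedet} to recognize \(\alpha\beta - (\alpha_1\cdots\alpha_n)(\beta_1\cdots\beta_m) = 1\) yields
\[
N_{i_j} = \frac{\beta_1\cdots\beta_m}{\alpha}\, N_i + \frac{\gamma_1}{\alpha}.
\]
Substituting the three families of decorations from \cref{lemma-valuations-2} and setting \(\beta_j := \gamma_1\) then reproduces the three displayed expressions for \(N_{i_1}\), \(N_{i_2}\), and \(N_{i_j}\) with \(j \geq 3\); the characterization of when \(\beta_j = 0\) follows directly from the vanishing criterion for \(\gamma_1\) in \cref{lemma-valuations-1}.

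For the \(k\)-formulas I would rerun the same decomposition, starting now from the second identity of \cref{prop-calculus}, \(k_i = \sum_{k\in T_e}\ell_{ik}(2-\delta_k)\). Because \(\ell_{ik}\) has the same divisibility structure in this sum as in the one for \(N_i\), the argument of \cref{lemma-valuations-1} produces integers \(\gamma_0', \gamma_1' \in \mathbb{Z}\), possibly negative this time since \(2-\delta_k\) may be negative, with \(k_i = \alpha\gamma_0' + \alpha_1\cdots\alpha_n\gamma_1'\) and \(k_{i_j} = \beta_1\cdots\beta_m\gamma_0' + \beta\gamma_1'\). The crucial observation is that \(k_i\) and \(k_{i_j}\) are invariants of the divisorial valuations \(v_i\) and \(v_{i_j}\) (they equal one plus the discrepancies), so \(\gamma_0'\) and \(\gamma_1'\) depend only on the extended Puiseux data of the two divisors, not on \(C\) or on the ambient resolution. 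I would therefore compute them in the concrete scenario of \cref{lemma-valuations-3}, where closed forms for each of \(k_i, k_{i_1}, k_{i_2}\), and \(k_{i_j}\) with \(j \geq 3\) are explicitly available. Substituting those closed forms, then simplifying with \eqref{eq-char2puiseux} (the relation \(m_g = n_g m_{g-1} + q_g\)) and the appropriate edge determinant identity, reduces each of the three cases to a one-line algebraic check producing the claimed constants \(\tfrac{1}{n_g}\), \(\tfrac{1}{n_g}\), and \(1\), respectively.

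The main obstacle I anticipate is the sign bookkeeping across the three configurations of \cref{lemma-valuations-2}: the decorations for \(E_{i_2}\) appear in a mirror pattern relative to those for \(E_{i_1}\), with the large decoration sitting on the \(E_i\)-side of the edge rather than on the far side, so the edge determinant identities \(n_g b_g - a_g q_g = 1\) and \(c_g q_g - n_g d_g = 1\) enter the corresponding calculations with opposite orientations. Once these sign conventions are pinned down, every remaining manipulation becomes a routine algebraic verification.
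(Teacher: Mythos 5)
Your argument for the \(N\)-formulas is essentially identical to the paper's: apply \cref{lemma-valuations-1} to the pair \(E_i, E_{i_j}\), eliminate \(\gamma_0\) (the paper's \(\beta'_1\)), and simplify the coefficient of \(\gamma_1\) via the edge determinant rule before substituting the decorations of \cref{lemma-valuations-2}. For the \(k\)-formulas the paper simply plugs the closed expressions of \cref{lemma-valuations-3} into the target identities and verifies directly; your proposal wraps this in an extra elimination step. That step is not needed — and it mildly complicates things because the elimination naturally produces a prefactor \(\tfrac{\beta_1\cdots\beta_m}{\alpha}\) (for instance \(\tfrac{c_g n_{g-1}\overline{m}_{g-1}+d_g}{\overline{m}_g}\) at \(E_{i_2}\)) rather than the \(\tfrac{c_g}{n_g}\) that appears in the statement, so you would still need the same eq.~\eqref{eq-char2semi}/\eqref{eq-char2puiseux} manipulations to reconcile the two. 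Nevertheless your observation that the coefficients depend only on the valuations, so you may compute them in the privileged situation of \cref{lemma-valuations-3}, is sound.

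Your instinct about the sign orientations is more than a bookkeeping worry: it exposes a genuine typo in the statement. The edge determinant identities you record are \(n_g b_g - a_g q_g = 1\) and \(c_g q_g - n_g d_g = 1\) (not \(n_g d_g - c_g q_g = 1\)). Running the direct computation with the second identity forces
\[
k_{i_2} - \frac{c_g}{n_g} k_i \;=\; d_g - \frac{c_g q_g}{n_g} \;=\; \frac{n_g d_g - c_g q_g}{n_g} \;=\; -\frac{1}{n_g},
\]
so the correct formula is \(k_{i_2} = \tfrac{c_g}{n_g}k_i - \tfrac{1}{n_g}\), with a minus sign. A quick sanity check confirms this: for \(f = y^2 - x^3\) one has \((N_i, k_i) = (6,5)\), \((N_{i_2}, k_{i_2}) = (2,2)\), \(c_1 = d_1 = n_1 = 1, 2\), and the stated \(+\tfrac{1}{n_g}\) version yields \(3\), not the correct \(2\); for \(f = y^3 - x^4\) it even yields the non-integer \(\tfrac{8}{3}\). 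Fortunately this typo does not propagate: re-deriving \(\epsilon_{i_2}(\omega)\) in \cref{prop-eps-rup} from the corrected \(k_{i_2}\) reproduces exactly the displayed formula there, so the downstream results are unaffected. In short, your proposed route is correct, coincides with the paper's for the \(N\)-part, is a slightly more elaborate variant of the paper's direct computation for the \(k\)-part, and — if carried out with the sign discipline you yourself flag — would yield the corrected \(k_{i_2}\) identity.
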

\begin{proof}
The proof follows from \cref{lemma-valuations-1,lemma-valuations-2}. We will do the case of \(E_{i_1}\) since the other cases are similar. From these two lemmas,
\[
    N_i = n_g \beta'_1 + \overline{m}_g \beta_1, \qquad N_{i_1} = a_g \beta'_1 + (a_g n_{g-1} \overline{m}_{g-1} + b_g) \beta_1.
\]
Solving for \(\beta'_1\) in the expression for \(N_i\), substituting in the expression for \(N_{i_1}\) and using the edge determinant rule gives the result. \cref{lemma-valuations-1} implies that \(\beta_1\) is zero if and only if there are now arrowheads in the connected component of \(\Delta_\pi \setminus \{E_i\}\) containing \(E_{i_1}\).

\jump

The expressions in the second part of the lemma follow from a direct computation using \cref{lemma-valuations-3}, \cref{eq-char2puiseux}, and the edge determinant rule.
\end{proof}

\begin{corollary} \label{cor-valuations-3}
Let \(g \in \mathscr{O}_{\mathbb{C}^2, x}\) be a germ of a holomorphic function. Then, there exist \(\alpha_1, \dots, \alpha_{m_i+1} \in \mathbb{Z}_{\geq 0}\) depending only on \(g\) such that
\begin{equation} \label{eq-cor-valuations-3}
    v_i(g) = \overline{m}_g \alpha_1 + n_g \alpha_2 + \sum_{j = 3}^{m_i+1} n_g \overline{m}_g \alpha_j,
\end{equation}
where \(\alpha_j\), for \(j = 1, \dots, m_i,\) is zero if and only if there are no arrowheads in the connected component of \(\Delta_\pi(g) \setminus \{E_i\}\) containing \(E_{i_j}\). Moreover,
\[
v_{i_1}(g) = \frac{a_g}{n_g} v_i(g) + \frac{\alpha_1}{n_g}, \quad v_{i_2}(g) = \frac{c_g n_{g-1} \overline{m}_{g-1} + d_g}{\overline{m}_g} v_i(g) + \frac{\alpha_2}{\overline{m}_g}, \quad v_{i_j}(g) = n_{g+1, j} v_{i}(g) + \alpha_j,
\]
for \(j = 3, \dots, m_i\), and \(\alpha_{m_i+1} = \gamma_i\).
\end{corollary}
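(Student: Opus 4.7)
The proof is essentially a direct generalization of \cref{lemma-linear-valuations}, with the branch multiplicities $N_k$ replaced throughout by the valuations $v_k(g)$, and the graph $\Delta_\pi$ replaced by the enlarged graph $\Delta_\pi(g)$. The starting point is the diagram calculus identity \eqref{eq-calculus-valuations}, namely
\[ v_i(g) = \sum_{k \in T_s(g)} \ell_{ik}\, v_k(g), \]
combined with the observation that the decorations of $\Delta_\pi$ are computed from the intersection matrix of the exceptional components alone, and are therefore unaffected by the presence of additional arrowheads coming from strict transform components of $\pi^* g$ that are not components of $\pi^{-1} C$.

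The plan is to partition $T_s(g)$ according to the connected component of $\Delta_\pi(g) \setminus \{E_i\}$ in which each arrowhead lies: there are $m_i$ such components meeting $E_{i_1}, \dots, E_{i_{m_i}}$ respectively, plus, possibly, arrowheads of $\pi^* g$ attached by a fresh edge directly to $E_i$. For an arrowhead $E_k$ in the $j$-th component, $\ell_{ik}$ by definition contains the decoration of $\Delta_\pi$ at $E_i$ along every edge other than the one going to $E_{i_j}$; by \cref{lemma-valuations-2} this common factor is $\overline{m}_g$ if $j=1$, $n_g$ if $j=2$, and $n_g \overline{m}_g$ if $j\geq 3$. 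For arrowheads attached directly to $E_i$, no decoration at $E_i$ is excluded and the common factor is the full product $n_g \overline{m}_g$. Factoring out these common divisors and defining $\alpha_1,\dots,\alpha_{m_i+1}\in \mathbb{Z}_{\geq 0}$ as the corresponding quotient sums yields \eqref{eq-cor-valuations-3}; the vanishing of $\alpha_j$ for $j\leq m_i$ is then equivalent to the absence of arrowheads in the component because the sum defining it consists of positive terms, exactly as in \cref{lemma-valuations-1}. For the specific form $g=\prod_{r_j\neq 2} f_j^{\gamma_j}$ used later in the paper, the only branch of $g$ whose strict transform meets $E_i$ directly is $f_i$ with multiplicity $\gamma_i$, and hence $\alpha_{m_i+1}=\gamma_i$.

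To derive the three identities relating $v_{i_j}(g)$ to $v_i(g)$, I would replay the proof of \cref{lemma-linear-valuations} verbatim with $v_k(g)$ in place of $N_k$. For the $j=1$ case, grouping contributions by whether the arrowhead lies in the component of $E_{i_1}$ or not and applying \cref{lemma-valuations-1,lemma-valuations-2} gives
\[ v_i(g) = n_g \beta_1' + \overline{m}_g \alpha_1, \qquad v_{i_1}(g) = a_g \beta_1' + \bigl(a_g n_{g-1}\overline{m}_{g-1} + b_g\bigr)\alpha_1, \]
where $\beta_1'$ collects the contributions of arrowheads \emph{not} in the component of $E_{i_1}$ (equivalently, the contributions to $\alpha_2,\dots,\alpha_{m_i+1}$, weighted appropriately). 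Solving for $\beta_1'$ from the first identity, substituting into the second, and simplifying via the edge determinant rule \eqref{eq-edgedet} at the edge $E_i$--$E_{i_1}$ produces the claimed formula for $v_{i_1}(g)$. The cases $j=2$ and $j\geq 3$ are entirely analogous, with the decorations of \cref{lemma-valuations-2} adjusted accordingly.

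The only genuine point to verify beyond careful bookkeeping is that enlarging $\Delta_\pi$ to $\Delta_\pi(g)$ neither changes any decoration of $\Delta_\pi$ nor invalidates the divisibility arguments of \cref{lemma-valuations-1,lemma-valuations-2}; this is immediate since decorations are determinants of intersection matrices of exceptional divisors only, and appending leaves with decoration $1$ leaves the splice structure intact. The main obstacle is therefore organizational: keeping track of which arrowheads of $\pi^* g$ belong to which component of $\Delta_\pi(g) \setminus \{E_i\}$, correctly handling the extra arrowheads attached directly to $E_i$ (which give rise to the additional term $\alpha_{m_i+1}$ absent from \cref{lemma-linear-valuations}), and avoiding double counting.
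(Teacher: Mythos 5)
Your proposal is correct and takes the same route the paper intends: the paper's proof is a one-liner stating that the result follows from the arguments in \cref{lemma-valuations-1,lemma-linear-valuations} with \eqref{eq-calculus-valuations} replacing \eqref{eq-prop-calculus}, and you have fleshed out exactly that argument — partitioning \(T_s(g)\) by the component of \(\Delta_\pi(g)\setminus\{E_i\}\), factoring the decorations \(\overline{m}_g\), \(n_g\), \(n_g\overline{m}_g\) from \(\ell_{ik}\) via \cref{lemma-valuations-2}, and closing with the edge determinant rule. Your identification of the extra term \(\alpha_{m_i+1}=\gamma_i\) from arrowheads of \(\pi^*g\) attached directly to \(E_i\), and your remark that adding arrowheads does not alter the decorations (since those are determinants of intersection matrices of exceptional divisors only), are precisely the points one would need to make explicit.
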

\begin{proof}
    These expressions follow from the same arguments used in proofs of \cref{lemma-valuations-1,lemma-linear-valuations} but using \eqref{eq-calculus-valuations} instead of \eqref{eq-prop-calculus}.
\end{proof}

Using the previous results we can express the residue numbers at \(E_i\) in terms of the numerical data of the valuation associated with \(E_i\).

\begin{proposition} \label{prop-eps-rup}
Let \(\omega = g \dd x \in \Omega^2_{\mathbb{C}^2, x}\) be a top differential form. Using the notations from \cref{lemma-valuations-3,cor-valuations-3},
\[
    \epsilon_{i_1}(\omega) = \frac{\alpha_1 + 1}{n_g} + \frac{\beta_1}{n_g} \sigma_i(\omega), \quad \epsilon_{i_2}(\omega) = \frac{\alpha_2 + m_{g-1} + n_1 \cdots n_{g-1} - n_{g-1} \overline{m}_{g-1}}{\overline{m}_g} + \frac{\beta_2}{\overline{m}_g}\sigma_i(\omega),
\]
and
\[
    \epsilon_{i_j}(\omega) = \alpha_j + 1 + \beta_j \sigma_i(\omega), \quad \textnormal{for} \quad j = 3, \dots, m_i.
\]
\end{proposition}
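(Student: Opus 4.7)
The plan is to prove the proposition by substituting the expressions from \cref{lemma-linear-valuations} and \cref{cor-valuations-3} (and, for $k_{i_2}$, the form from \cref{lemma-valuations-3}) into the definition
\[
\epsilon_{i_j}(\omega) = N_{i_j}\sigma_i(\omega) + k_{i_j} + v_{i_j}(g),
\]
and then simplifying using the defining relation $N_i\sigma_i(\omega) + k_i + v_i(g) = 0$ together with the numerical identities established in Section 3. The cases $j = 1$ and $j \geq 3$ are immediate because in both the coefficient of $N_i$ in $N_{i_j}$, of $k_i$ in $k_{i_j}$, and of $v_i(g)$ in $v_{i_j}(g)$ all coincide (equal respectively to $a_g/n_g$ for $j = 1$ and to $n_{g+1,j}$ for $j \geq 3$). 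As a result, after substitution these three ``main'' terms combine into this common coefficient times $N_i\sigma_i(\omega) + k_i + v_i(g) = 0$, which vanishes; the surviving low-order contributions are exactly the claimed $(\alpha_1 + 1 + \beta_1\sigma_i(\omega))/n_g$ and $\alpha_j + 1 + \beta_j\sigma_i(\omega)$.

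The case $E_{i_2}$ is the substantive part of the proof. Here the coefficient of $N_i$ in $N_{i_2}$, namely $(c_g n_{g-1}\overline{m}_{g-1} + d_g)/\overline{m}_g$, still coincides with the coefficient of $v_i(g)$ in $v_{i_2}(g)$, so the $v_i(g)$-proportional pair cancels after substitution, but no such cancellation happens in the $k_i$-piece. For this part I would use the explicit form $k_{i_2} = n_1\cdots n_{g-1}c_g + c_g m_{g-1} + d_g$ from \cref{lemma-valuations-3}, place the whole residue number over $\overline{m}_g$, and expand the numerator. The simplification then invokes two sets of identities: first, the Puiseux-pair relations $q_g = \overline{m}_g - n_g n_{g-1}\overline{m}_{g-1} = m_g - n_g m_{g-1}$ coming from \eqref{eq-char2puiseux} and \eqref{eq-char2semi}; second, the edge-determinant identity $c_g q_g - n_g d_g = 1$, obtained from \eqref{eq-edgedet} applied to the decorations listed in \cref{lemma-valuations-2}. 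Substituting $c_g q_g = n_g d_g + 1$ causes the $d_g$-proportional contributions to collapse, via the auxiliary identity $\overline{m}_g m_{g-1} - n_{g-1}\overline{m}_{g-1}m_g = q_g(m_{g-1} - n_{g-1}\overline{m}_{g-1})$ derived from the two expressions for $q_g$; what remains is exactly the advertised numerator $\alpha_2 + m_{g-1} + n_1\cdots n_{g-1} - n_{g-1}\overline{m}_{g-1}$.

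The main obstacle is purely the bookkeeping in the $E_{i_2}$ case: one has to keep track of a fairly long linear combination and apply both forms of $q_g$ together with the edge-determinant relation in the correct order so that the $d_g$-proportional terms really do cancel. No further conceptual input is needed beyond the lemmas of Section 3 and \cref{cor-valuations-3}.
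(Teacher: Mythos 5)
Your proposal is correct and is essentially a fleshed-out version of the paper's one-line proof (``replace the expressions\dots{} in the definition \eqref{eq-definition-eps}''): you substitute, cancel the common-slope pieces against $N_i\sigma_i(\omega)+k_i+v_i(g)=0$, and handle the $E_{i_2}$ case via the edge determinant $c_g q_g - n_g d_g = 1$ together with the two expressions for $q_g$ and the auxiliary identity $\overline{m}_g m_{g-1} - n_{g-1}\overline{m}_{g-1} m_g = q_g(m_{g-1} - n_{g-1}\overline{m}_{g-1})$. You are also right to take $k_{i_2}$ from \cref{lemma-valuations-3} rather than the linear form in \cref{lemma-linear-valuations}; the latter, as printed with $+\tfrac{1}{n_g}$, is inconsistent with the edge-determinant sign you need (and with $y^2-x^3$, where $k_{i_2}=k_0=2$), so your choice avoids that pitfall.
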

\begin{proof}
It is enough to replace the expressions in \cref{lemma-valuations-3,cor-valuations-3} in the definition \eqref{eq-definition-eps} of residue numbers at \(E_i\).
\end{proof}

\begin{corollary} \label{cor-eps-rup}
    One has the following inequalities,
    \begin{equation}
        \begin{split}
            -\frac{m_g + n_1 \cdots n_g + v_i(g)}{n_g \overline{m}_g} + \frac{\alpha_1 + 1}{n_g} & \leq  \epsilon_{i_1}(\omega) \leq \frac{\alpha_1 + 1}{n_g}, \\
            -\frac{\alpha_1 + 1}{n_g} & \leq \epsilon_{i_2}(\omega) \leq \frac{m_g + n_1 \cdots n_g + v_i(g)}{n_g \overline{m}_g} - \frac{\alpha_1 + 1}{n_g}. \\
        \end{split}
    \end{equation}
    The lower bounds are strict if \(r_i \geq 3\). The upper bounds are strict if there exists an arrowhead in the connected component of \(\Delta_\pi \setminus \{E_i\}\) containing \(E_{i_1}\), resp. \(E_{i_2}\). In addition,
    \[
            -\frac{m_g + n_1 \cdots n_g + v_i(g)}{n_g \overline{m}_g} + {\alpha_j + 1} \leq \epsilon_{i_j}(\omega) < {\alpha_j + 1}, \qquad \qquad \qquad \qquad \qquad \\
    \]
    for \(j = 3, \dots, m_i\).
\end{corollary}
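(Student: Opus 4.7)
The plan is to derive each inequality by substituting $\sigma_i(\omega) = -(k_i + v_i(g))/N_i$ (with $k_i = n_1\cdots n_g + m_g$ from \cref{lemma-valuations-3}) into the explicit formulas of \cref{prop-eps-rup}, and then combining the signs $\sigma_i(\omega) \leq 0$ and $\beta_j \in \mathbb{Z}_{\geq 0}$ with upper bounds on the $\beta_j$ coming from a decomposition of $N_i$. Concretely, applying \cref{lemma-valuations-1} to each pair $(E_i, E_{i_j})$ with the decorations supplied by \cref{lemma-valuations-2} yields $N_i = n_g\beta'_1 + \overline{m}_g\beta_1$, $N_i = \overline{m}_g\beta'_2 + n_g\beta_2$, and, for $j \geq 3$, $N_i = n_g\overline{m}_g\beta'_j + \beta_j$, with every $\beta'_j \in \mathbb{Z}_{\geq 0}$, so that $\overline{m}_g\beta_1$, $n_g\beta_2$, and $\beta_j$ are each bounded above by $N_i$.

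The upper bounds on $\epsilon_{i_1}(\omega)$ and on $\epsilon_{i_j}(\omega)$ for $j \geq 3$ are then immediate by discarding the non-positive $\beta_j\sigma_i(\omega)$ term in the expression from \cref{prop-eps-rup}. The lower bounds follow by plugging the upper bounds on $\beta_j$ into the same formulas; the resulting expressions are reduced to the stated shape through the arithmetic identities $\overline{m}_g = n_g n_{g-1}\overline{m}_{g-1} + q_g$ and $m_g = n_g m_{g-1} + q_g$ together with the expression for $v_i(g)$ from \cref{cor-valuations-3}, which combine to give
\[
\frac{\alpha_1+1}{n_g} + \frac{\alpha_2 + m_{g-1} + n_1\cdots n_{g-1} - n_{g-1}\overline{m}_{g-1}}{\overline{m}_g} + \sum_{j=3}^{m_i+1}\alpha_j = \frac{k_i + v_i(g)}{n_g\overline{m}_g}.
\]
The bounds on $\epsilon_{i_2}(\omega)$ come from combining this identity with the inequality $n_g\beta_2 \leq N_i$.

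Strictness is read off the positivity of $\beta_j$ or $\beta'_j$: by the last statement of \cref{lemma-valuations-1}, $\beta_j > 0$ iff the component of $\Delta_\pi \setminus \{E_i\}$ containing $E_{i_j}$ carries an arrowhead (giving strict upper bound), while $\beta'_j > 0$ iff the component of $\Delta_\pi \setminus \{E_{i_j}\}$ containing $E_i$ does (giving strict lower bound). When $r_i \geq 3$ the rupture $E_i$ has at least two further neighbors in $\textnormal{Supp}(F_\pi)$ besides $E_{i_j}$, and by minimality of the log resolution their subtrees must contain arrowheads, forcing $\beta'_j > 0$; for $j \geq 3$ the strict upper bound $\epsilon_{i_j}(\omega) < \alpha_j + 1$ is analogous, since $E_{i_j}$ is either itself an arrowhead or a later-blown-up exceptional divisor whose subtree carries an arrowhead by minimality. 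The main obstacle is the arithmetic bookkeeping required to collapse the substituted expressions to the stated shape, which repeatedly uses the edge determinant identities $-a_g q_g + n_g b_g = 1$ and $c_g q_g - n_g d_g = 1$ coming from \cref{lemma-valuations-2}.
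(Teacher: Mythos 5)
Your overall strategy is the same as the paper's: start from the explicit formulas of \cref{prop-eps-rup}, turn $\beta_j \geq 0$ into the upper bounds, use the decomposition of $N_i$ from \cref{lemma-valuations-1} together with the decorations from \cref{lemma-valuations-2} to bound $\beta_j$ from above for the lower bounds, and read strictness off the vanishing criteria for $\gamma_0, \gamma_1$ in \cref{lemma-valuations-1}. The auxiliary identity you write down for $\epsilon_{i_2}(\omega)$ is also correct; it is exactly what one obtains after substituting the expression for $\alpha_2$ from \eqref{eq-cor-valuations-3}, which is how the paper organizes the $\epsilon_{i_2}$ case.

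There is, however, a genuine error in the decomposition of $N_i$ relative to $E_{i_j}$ for $j \geq 3$. You write $N_i = n_g\overline{m}_g\beta'_j + \beta_j$ and conclude $\beta_j \leq N_i$, but the correct decomposition is $N_i = \beta'_j + n_g\overline{m}_g\beta_j$. Indeed, \cref{lemma-valuations-2} gives $\alpha = 1$ and $\alpha_1\cdots\alpha_n = n_g\overline{m}_g$ for these edges, so \cref{lemma-valuations-1} yields $N_i = \gamma_0 + n_g\overline{m}_g\gamma_1$; and since $\gamma_1 = 0$ exactly when there is no arrowhead in the component of $\Delta_\pi \setminus \{E_i\}$ containing $E_{i_j}$, which is the defining property of $\beta_j$ in \cref{lemma-linear-valuations}, one must have $\beta_j = \gamma_1$, the coefficient that carries $n_g\overline{m}_g$. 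The correct bound is therefore $\beta_j \leq N_i/(n_g\overline{m}_g)$, not $\beta_j \leq N_i$. With your weaker bound, $\epsilon_{i_j}(\omega) = \alpha_j + 1 + \beta_j\sigma_i(\omega)$ only gives $\epsilon_{i_j}(\omega) \geq \alpha_j + 1 - (k_i + v_i(g))$, which is off by a factor $n_g\overline{m}_g$ in the negative term from the stated lower bound $\alpha_j + 1 - (m_g + n_1\cdots n_g + v_i(g))/(n_g\overline{m}_g)$, and no amount of rearranging via the edge-determinant identities can close that gap. Your analysis of the strictness conditions is otherwise sound.
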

\begin{proof}
    Start by replacing the following equality obtained from \eqref{eq-cor-valuations-3}
    \[ \alpha_2 = \frac{v_i(g) - \overline{m}_g \alpha_1 - n_g\overline{m}_g \alpha}{n_g}, \] 
    where \(\alpha = \alpha_3 + \cdots + \alpha_{m_i+1}\), in the expression for \( \epsilon_{i_2}(\omega) \). Then, the upper bounds follow from \( \beta_1, \dots, \beta_{m_i}, \alpha \geq 0 \). The strict lower bounds follow from \cref{lemma-linear-valuations}. Moreover, since the log resolution is minimal, \(\beta_j > 0\), for all \(j = 3, \dots, m_i\). For the lower bounds, recall that from \cref{lemma-valuations-1}
    \[ N_i = n_g \beta'_1 + \overline{m}_g \beta_1 = \overline{m}_g \beta'_2 + n_g \beta_2 = \beta'_j + n_g \overline{m}_g \beta_j, \]
    for \(j = 3, \dots, m_i\). Solving for \( \beta_j, j = 1, \dots, m_i, \) and replacing the respective expressions in \cref{prop-eps-rup} yields the lower bounds, since \(\beta'_1, \dots, \beta'_{m_i} \geq 0\). Finally, \(r_i \geq 3\) implies that there is at least one arrowhead in connected components of \(\Delta_\pi \setminus \{E_{i_1}\}\) and \(\Delta_\pi \setminus \{E_{i_2}\}\) containing \(E_i\) since the log resolution is minimal. Thus, \(\beta'_1, \beta'_2 > 0\). 
\end{proof}

\subsection{Bounds for the residue numbers} \label{sec-bounds-residue}
We keep the same notations and conventions as in the previous section. If \(E_i, i \in T_e, \) is an exceptional divisor, we will denote by \(\Gamma_i\) the semigroup of the valuation associated with \(E_i\) and by \(f_{j_0}, \dots, f_{j_g} \in \mathscr{O}_{\mathbb{C}^2, x}, j_0, \dots, j_g \in T, \) a set of maximal contact elements for the valuation of \(E_i\).

\jump

With the conventions set in \cref{sec-neigh-valuations}, the arrowheads of the curves defined by \(f_{j_0}, \dots, f_{j_g}\) are always in the direction of \(E_{i_1}\) and \(E_{i_2}\). If \(E_{i_1} \neq 0\), the arrowhead of \(f_{j_g}\) is in the direction of \(E_{i_1}\) and all the rest are in the direction of \(E_{i_2}\). Otherwise, all arrowheads appear in the direction of \(E_{i_2}\). 

\begin{proposition} \label{prop-rup2}
    Let \(E_i, i \in T_e,\) be an exceptional divisor. Given any differential form \(\omega \in \Omega^2_{\mathbb{C}^2, x}\), there always exists \(\omega' \in \Omega^2_{\mathbb{C}^2, x}\) such that
    \[ -1 \leq \epsilon_{i_1}(\omega'), \epsilon_{i_2}(\omega') \leq 1, \] 
    and with \(\sigma_i(\omega') = \sigma_i(\omega)\). Moreover, the lower bounds are strict if \(r_i \geq 3\). If there is an arrowhead in the connected component of \(\Delta_\pi \setminus \{E_i\}\) containing \(E_{i_1}\), resp. \(E_{i_2}\), the upper bound for \(\epsilon_{i_1}(\omega')\), resp. \(\epsilon_{i_2}(\omega')\), is strict.
\end{proposition}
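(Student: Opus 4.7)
The plan is to construct $\omega' = g'\,\dd x$ where $g'$ is a monomial in a chosen set of maximal contact elements $f_{j_0}, \dots, f_{j_g}$ for the valuation $v_i$, with exponents prescribed by the canonical semigroup expression of $v_i(g)$. Since $\sigma_i$ depends only on $v_i(g')$, the condition $\sigma_i(\omega') = \sigma_i(\omega)$ reduces to requiring $v_i(g') = v_i(g)$.

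Concretely, given $\omega = g\,\dd x$, set $\gamma = v_i(g) \in \Gamma_i$ and use \cref{semigroup-prop} to write $\gamma = \sum_{k=0}^g \gamma_k \overline{\beta}_k$ with $\gamma_k < n_k$ for $k = 1, \dots, g$. Define $g' = \prod_{k=0}^g f_{j_k}^{\gamma_k}$; then $v_i(g') = \gamma$ by linearity. By \cref{max-contact} together with the conventions of \cref{sec-neigh-valuations}, one can choose the $f_{j_k}$ so that the strict transform of $C_{j_g}$ meets $E_{i_1}$ while those of $C_{j_0}, \dots, C_{j_{g-1}}$ meet $E_{i_2}$ or dead-end divisors on the $E_{i_2}$-side. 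Substituting $g'$ into \cref{cor-valuations-3} then expresses $\alpha_1(g')$ purely in terms of the exponent $\gamma_g$, and $\alpha_2(g')$ in terms of $\gamma_0, \dots, \gamma_{g-1}$. The normal-form bound $\gamma_k < n_k$, combined with \eqref{eq-char2semi} and the closed-form computations of \cref{lemma-valuations-2}, should translate into $\alpha_1(g') + 1 \le n_g$ and $\alpha_2(g') + m_{g-1} + n_1 \cdots n_{g-1} - n_{g-1}\overline{m}_{g-1} \le \overline{m}_g$. Inserting these into \cref{cor-eps-rup} yields $\epsilon_{i_1}(\omega'), \epsilon_{i_2}(\omega') \le 1$, while the lower bound $-1 \le \epsilon_{i_j}(\omega')$ is already part of that corollary.

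The strict lower bound when $r_i \ge 3$ is immediate from \cref{cor-eps-rup}, whose lower bounds are already strict under that hypothesis. For the strict upper bound in the arrowhead case, \cref{lemma-valuations-1} gives $\beta_1 > 0$, resp.~$\beta_2 > 0$, as soon as an arrowhead sits in the connected component of $\Delta_\pi \setminus \{E_i\}$ containing $E_{i_1}$, resp.~$E_{i_2}$, and since $\sigma_i(\omega') < 0$, the linear term in \cref{prop-eps-rup} forces strict inequality. The main obstacle will be the explicit bookkeeping linking the semigroup exponents $\gamma_0, \dots, \gamma_g$ of $g'$ to $\alpha_1(g')$ and $\alpha_2(g')$ tightly enough to reach the stated bounds; this ultimately rests on computing $v_{i_1}$ and $v_{i_2}$ of each maximal contact element via the edge-determinant rule \eqref{eq-edgedet} and \cref{lemma-valuations-2}, and noting that for the branch defining $v_i$ one has $e_g = 1$ so that the contribution of $f_{j_g}^{\gamma_g}$ to $\alpha_1(g')$ is exactly $\gamma_g$.
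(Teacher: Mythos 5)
Your strategy — write $g' = \prod_{k=0}^g f_{j_k}^{\gamma_k}$ using the normal-form semigroup decomposition $v_i(g) = \sum_k \gamma_k\overline{\beta}_k$ with $\gamma_k < n_k$ — does not in general produce the bounds claimed, and the two places where the argument asserts the bounds come "for free" are where it breaks.

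\textbf{The lower bound is not part of \cref{cor-eps-rup}.} The lower bound in that corollary is
\[
-\frac{m_g + n_1\cdots n_g + v_i(g)}{n_g\overline{m}_g} + \frac{\alpha_1+1}{n_g},
\]
which is a function of $v_i(g)$ and is \emph{not} $-1$; it drops below $-1$ as soon as $v_i(g) > (\alpha_1+1)\overline{m}_g + c(\Gamma_i) - 1$. In the normal form $\alpha_1 = \gamma_g \le n_g - 1$ is frozen, while $v_i(g)$ (driven by $\gamma_0$, which the normal form does not bound) can be arbitrarily large. So when $\beta_1 > 0$ the term $\tfrac{\beta_1}{n_g}\sigma_i(\omega')$ in \cref{prop-eps-rup} can be arbitrarily negative and $\epsilon_{i_1}(\omega') < -1$ can genuinely occur.

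\textbf{The claimed bound on $\alpha_2(g')$ is false.} From \cref{cor-valuations-3} one has
\[
\alpha_2(g') \;=\; \frac{1}{n_g}\sum_{k<g}\gamma_k\overline{\beta}_k \;=\; \gamma_0\, n_1\cdots n_{g-1} + \gamma_1\, n_2\cdots n_{g-1}\overline{m}_1 + \cdots + \gamma_{g-1}\overline{m}_{g-1},
\]
and $\gamma_0$ is unbounded. A concrete instance: for the cusp $f = y^2 - x^3$ with $E_i = E_2$ the rupture divisor ($n_g = 2$, $\overline{m}_g = 3$), take $g' = x^{100}$. The normal form gives $\gamma_g = 0$, $\gamma_0 = 100$, hence $\alpha_1 = 0$, $\alpha_2 = 100$, and since $\beta_2 = 0$ one gets from \cref{prop-eps-rup}
\[
\epsilon_{i_2}(\omega') = \frac{\alpha_2 + 1}{\overline{m}_g} = \frac{101}{3} \gg 1.
\]
So the normal form alone fails the upper bound for $\epsilon_{i_2}$.

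\textbf{What is missing.} The upper bound for $\epsilon_{i_1}$ does follow from $\alpha_1 = \gamma_g \le n_g-1$, and then \cref{lemma-sumeps} gives $\epsilon_{i_1} + \epsilon_{i_2} \geq 0$ when $\alpha_3, \dots = 0$, hence $\epsilon_{i_2} \ge -1$. But the normal form offers no control on the excess that piles up in $\alpha_2$ nor on $\epsilon_{i_1}$ from below. The paper's proof does something essentially different: it shows via the conductor that if one of the bounds is violated then $v_i(g)$ minus a suitable multiple of $\overline{m}_g$ is still in $\Gamma_i$, uses the congruence $\gamma_g \equiv -1 \pmod{n_g}$ to recompute coordinates, and then iteratively rebalances by \emph{increasing} the exponent of $f_{j_g}$ past $n_g$ (to fix $\epsilon_{i_1}$ from below) or by diverting excess into factors $f_i^\gamma$ — branches through $E_i$ itself, which contribute to $\alpha_{m_i+1}$ rather than to $\alpha_1$ or $\alpha_2$ — to fix $\epsilon_{i_2}$ from above. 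The normal form $\gamma_g < n_k$ is precisely what these rebalancing steps \emph{break}, by design; without them the argument cannot close.
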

\begin{proof}
    From \cref{cor-valuations-3} we see that if \(f_v, v \in T\), has an arrowhead in the direction of \(E_{i_j}, j \in \{3, \dots, m_i\}\), then \(v_i(f_v) = n_g \overline{m}_g \alpha_v \), for some \(\alpha_v \in \mathbb{Z}_{> 0}\). Therefore, \cref{semigroup-prop} implies that \(f_v\) can be replaced by a product of maximal contact elements for the valuation of \(E_i\) while \(v_i(g)\), and hence \(\sigma_i(\omega)\), remains constant. Therefore, we can always assume that \(\omega\) is such that \(\alpha_3, \dots, \alpha_{m_i+1} = 0\).

    \jump

    Assume now that \(\omega\) is such that the lower bounds are not satisfied, that is, \(\epsilon_{i_1}(\omega) < -1\) and \(\epsilon_{i_2}(\omega) < -1\). We observe now that both inequalities cannot happen at the same time. Indeed, if \(r_i > 1\), \cref{lemma-sumeps,prop-eps-rup} imply
    \[
        \epsilon_{i_1}(\omega) + \epsilon_{i_2}(\omega) = -\sum_{j = 3}^m \beta_j \sigma_i(\omega) \geq 0,
    \]
    so \(\epsilon_{i_1}(\omega), \epsilon_{i_2}(\omega) < 0\) is not possible. If \(r_i = 1\), then \(\epsilon_{i_1}(\omega) = 1\) and \(\epsilon_{i_2}(\omega) \leq -1\). Let us then start assuming that \(\epsilon_{i_1}(\omega) < -1\). Combining this assumption with the lower bound from \cref{cor-eps-rup} and \eqref{conductor} one obtains
    \[ 
        v_i(g) - (\alpha_1 + 1) \overline{m}_g > n_g \overline{m}_g - m_g - n_1 \cdots n_g = c(\Gamma_i) - 1.
    \]
    Thus, \( v_i(g) - (\alpha_1 + 1) \overline{m}_g \in \Gamma_i \) and denote \( \gamma_0, \dots, \gamma_g \in \mathbb{Z}_{\geq 0} \) its coordinates in \( \Gamma_i \) fulfilling \cref{semigroup-prop}. We claim now that \( \gamma_g = n_g - 1 \). To see this, first observe that \( v_i(g) \equiv \alpha_1 \overline{m}_g \pmod{n_g} \). Consequently, \[ v_i(g) - (\alpha_1 + 1) \overline{m}_g \equiv - \overline{m}_g \equiv \gamma_g \overline{m}_g \pmod{n_g}. \]
    From the fact that \( \overline{m}_g \) and \( n_g \) are coprime, it follows that \( \gamma_g \equiv -1 \pmod{n_g} \) and since \( 0 \leq \gamma_g \leq n_g - 1 \) the claim follows. Then, define
    \[
        \omega' = g' \dd x, \qquad \textnormal{with} \qquad g' = f_{j_0}^{\gamma_0} f_{j_1}^{\gamma_1} \cdots f_{j_{g-1}}^{\gamma_{g-1}} f_{j_g}^{\alpha_1 + n_g}.
    \]
    Several checks are in order. First, let us check that \(v_i(g') = v_i(g)\), and thus \(\sigma_i(\omega') = \sigma_i(\omega)\). Indeed,
    \begin{displaymath}
        \begin{split}
            v_i(g') & = \gamma_0 v_i(f_{j_0}) + \cdots + \gamma_{g-1} v_i(f_{j_{g-1}}) + (\alpha_1 + n_g) v_i(f_{j_g}) \\
            & = \gamma_0 v_i(f_{j_0}) + \cdots + \gamma_{g} v_i(f_{j_g}) + (\alpha_1 + 1) v_i(f_{j_g}) \\
            & = v_i(g),
        \end{split}
    \end{displaymath}
    since \( v_i(f_{j_g}) = \overline{m}_g \) and \( \gamma_{g} = n_g - 1\). Secondly, notice that \(\epsilon_{i_1}(\omega') = \epsilon_{i_1}(\omega) + 1 \) by \cref{prop-eps-rup}. Therefore, either \( -1 \leq \epsilon_{i_1}(\omega') \) or we can replace \( \omega \) by \( \omega' \) and repeat the same process until \( -1 \leq \epsilon_{i_1}(\omega') \).
    
    \jump

    Let us assume now that \(\epsilon_{i_2}(\omega) < -1\). Using \cref{cor-eps-rup} one gets that \(\alpha_1 > n_g - 1\). Hence, we can apply \cref{semigroup-prop} and obtain \(\gamma_0, \dots, \gamma_g \in \mathbb{Z}_{\geq 0}\)
    \[
        \omega' = g' \dd x \qquad \textnormal{with} \qquad g' = f_{j_0}^{\gamma_0} f_{j_1}^{\gamma_1} \cdots f_{j_g}^{\gamma_g},
    \]
    such that \(\gamma_g \leq n_{g} - 1\) and \(v_i(g') = v_i(g)\), thus \(\sigma_i(\omega') = \sigma_i(\omega)\). It then follows from \cref{cor-eps-rup} that \(-1 \leq \epsilon_{i_1}(\omega)\). Notice that if \(r_i \geq 3\), the lower bounds from \cref{cor-eps-rup} are strict, and using the same arguments above we can make the lower bounds for \(\epsilon_{i_1}(\omega')\) and \(\epsilon_{i_2}(\omega')\) strict.

    \jump

    Let us focus now on the upper bounds. By the previous constructions, we can always assume that \(\omega = g \dd x \in \Omega^2_{\mathbb{C}^2, x}\) is such that \(-1 \leq \epsilon_{i_1}(\omega), \epsilon_{i_2}(\omega)\) and \(\alpha_3, \dots, \alpha_{m_i+1} = 0\). Notice that if \(r_i = 1, 2\), then the upper bounds are automatically satisfied by \cref{lemma-sumeps}. So let us assume that \(\epsilon_{i_1}(\omega),\epsilon_{i_2}(\omega) > 1\) and \(r_i \geq 3\).  Let \(\gamma \in \mathbb{Z}_{\geq 1}\) be the maximum integer such that \(\epsilon_{i_2}(\omega) > \gamma\), then the upper bound for \(\epsilon_{i_2}(\omega)\) in \cref{cor-eps-rup} gives
    \[
        v_i(g) - (\alpha_1 + 1)\overline{m}_g - n_g \overline{m}_g (\gamma - 1) > c(\Gamma_i) - 1.
    \]
    Proceeding similarly as above, let \(\gamma_1, \dots, \gamma_g \in \mathbb{Z}_{> 0}\) from \cref{semigroup-prop} and define
    \begin{equation} \label{eq-prop-rup2}
        \omega' = g' \dd x \qquad \textnormal{with} \qquad g' = f_{j_0}^{\gamma_0} \cdots f_{j_{g-1}}^{\gamma_{g-1}} f_{j_g}^{\alpha_1} f_{i}^\gamma,
    \end{equation}
    where \(f_{i} \) is such that \(v_i(f_{i}) = n_g \overline{m}_g\). As before, \(\gamma_g = n_g - 1\) and one checks that \(v_i(g') = v_i(g)\). Moreover, it follows from \cref{prop-eps-rup} that \(\epsilon_{i_1}(\omega') = \epsilon_{i_1}(\omega) \) and \(\epsilon_{i_2}(\omega') = \epsilon_{i_2}(\omega) - \gamma \) since 
    \[
        \alpha'_2 = \frac{v_i(g') - \overline{m}_g \alpha_1 - n_g \overline{m}_g \gamma}{n_g} = \alpha_2 - \overline{m}_g \gamma.
    \]
    Therefore, \(-1 \leq \epsilon_{i_2}(\omega') \leq 1\). Now, combining \(\epsilon_{i_1}(\omega') > 1\) with the upper bound in \cref{cor-eps-prop} yields \(\alpha_g > n_g - 1\), so we define
    \[
        \omega'' = g'' \dd x \qquad \textnormal{with} \qquad g'' = f_{j_0}^{\gamma_0} \cdots f_{j_{g-1}}^{\gamma_{g-1}} f_{j_g}^{\alpha_1\hspace{-6pt} \pmod{n_g}} f_{i}^{\gamma + \lfloor \alpha_1 / n_g \rfloor},
    \]
    and, since \(v_i(f_{3,v}) = n_g \overline{m}_g\), it is clear that \(v_i(g'') = v_i(g') = v_i(g)\). Finally, using \cref{prop-eps-rup} one can check that \(\epsilon_{i_2}(\omega'') = \epsilon_{i_2}(\omega') \) and \(-1 \leq \epsilon_{i_1}(\omega'') \leq 1\). 

    \jump
    
    If there is an arrowhead in the connected component of \(\Delta_\pi \setminus \{E_i\}\) containing \(E_{i_1}\), resp. \(E_{i_2}\), the respective upper bounds from \cref{cor-eps-rup} are strict, hence the same arguments above show that the upper bounds for \(\epsilon_{i_1}(\omega'')\) and \(\epsilon_{i_2}(\omega'')\) can be made strict.
\end{proof}

\begin{corollary} \label{cor2-prop-rup2}
    Assume \(r_i \geq 3\). If \(\epsilon_{i_1}(\omega) \in \mathbb{Z}\), respectively \(\epsilon_{i_2}(\omega) \in \mathbb{Z}\), then there exists \(\omega' \in \Omega^2_{\mathbb{C}^2, x}\) with \(\epsilon_{i_1}(\omega') = 0\), respectively \(\epsilon_{i_2}(\omega') = 0\), if and only if there is at least one arrowhead in the connected component of \(\Delta_\pi \setminus \{ E_i\}\) containing \( E_{i_1} \), respectively \(E_{i_2}\).
\end{corollary}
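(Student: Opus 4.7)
The plan is to prove the biconditional for \(E_{i_1}\); the case of \(E_{i_2}\) follows by the parallel formulas in \cref{prop-eps-rup,lemma-linear-valuations,prop-rup2} after interchanging the roles of \(n_g\) and \(\overline{m}_g\). Before tackling either direction I want to establish one preliminary invariance: the residue class \(\epsilon_{i_1}(\omega) \bmod \mathbb{Z}\) depends only on \(\sigma_i(\omega)\). The argument is short. Fixing \(\sigma_i(\omega)\) fixes \(v_i(g)\), because \(k_i\) and \(N_i\) do not depend on the form, and reducing the decomposition of \cref{cor-valuations-3} modulo \(n_g\) yields \(v_i(g) \equiv \overline{m}_g \alpha_1 \pmod{n_g}\), which together with \(\gcd(n_g, \overline{m}_g) = 1\) pins \(\alpha_1 \bmod n_g\) down. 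The formula of \cref{prop-eps-rup} then forces \(\epsilon_{i_1}(\omega) - \epsilon_{i_1}(\omega') \in \mathbb{Z}\) whenever \(\sigma_i(\omega') = \sigma_i(\omega)\).

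For the (\(\Leftarrow\)) direction, assuming there is an arrowhead in the connected component of \(\Delta_\pi \setminus \{E_i\}\) containing \(E_{i_1}\), I invoke \cref{prop-rup2} to produce \(\omega'\) with \(\sigma_i(\omega') = \sigma_i(\omega)\) and \emph{strict} bounds \(-1 < \epsilon_{i_1}(\omega') < 1\); the strict lower bound is provided by \(r_i \geq 3\) and the strict upper bound is exactly the arrowhead hypothesis in \cref{prop-rup2}. The invariance above then promotes \(\epsilon_{i_1}(\omega) \in \mathbb{Z}\) to \(\epsilon_{i_1}(\omega') \in \mathbb{Z}\), and since the only integer in \((-1, 1)\) is zero, this gives \(\epsilon_{i_1}(\omega') = 0\).

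For the (\(\Rightarrow\)) direction, I start from a hypothetical \(\omega'\) with \(\epsilon_{i_1}(\omega') = 0\) and rewrite the formula of \cref{prop-eps-rup} as \(\alpha_1' + 1 = -\beta_1 \sigma_i(\omega')\). Since the left-hand side is at least \(1\) while \(\sigma_i(\omega') < 0\), the case \(\beta_1 = 0\) is excluded, so \(\beta_1 \neq 0\) and the characterization of vanishing of \(\beta_1\) in \cref{lemma-linear-valuations} supplies the desired arrowhead. No serious obstacle arises: the one conceptual point is the modular invariance of \(\alpha_1\) under \(\sigma_i\)-preserving modifications, and once that is in hand both directions reduce to reading off the bounds of \cref{prop-rup2} and the formulas of \cref{prop-eps-rup}.
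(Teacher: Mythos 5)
The plan and the argument for \(E_{i_1}\) track the paper's own proof exactly: the \((\Leftarrow)\) direction via \cref{prop-rup2} combined with the mod-\(\mathbb{Z}\) rigidity of \(\epsilon_{i_1}\), and the \((\Rightarrow)\) direction via \cref{prop-eps-rup} and the vanishing criterion for \(\beta_1\) in \cref{lemma-linear-valuations}. Your preliminary mod-\(\mathbb{Z}\) invariance observation is a genuinely useful clarification of a step the paper leaves implicit when it asserts ``the converse implication follows from \cref{prop-rup2}.'' So the \(E_{i_1}\) half is fine and essentially identical to the paper's.

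The gap is in your opening sentence, where you dismiss \(E_{i_2}\) as ``parallel\ldots after interchanging the roles of \(n_g\) and \(\overline{m}_g\).'' It is not. \cref{prop-eps-rup} gives
\[
\epsilon_{i_2}(\omega)=\frac{\alpha_2+m_{g-1}+n_1\cdots n_{g-1}-n_{g-1}\overline{m}_{g-1}}{\overline{m}_g}+\frac{\beta_2}{\overline{m}_g}\sigma_i(\omega),
\]
whose constant term is \(m_{g-1}+n_1\cdots n_{g-1}-n_{g-1}\overline{m}_{g-1}\), not \(1\). Your decisive step ``the left-hand side is at least \(1\)'' therefore does not transfer; you would be rewriting the formula as \(\alpha'_2+K=-\beta_2\sigma_i(\omega')\) with a constant \(K\) that, unlike the \(E_{i_1}\) case, is not a priori positive. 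In fact, using \eqref{eq-char2semi} one finds \(K=(k_i-\overline{m}_g)/n_g\), and as soon as the valuation of \(E_i\) has two or more Puiseux pairs one has \(\overline{m}_g>k_i\), so \(K<0\). (Concretely, for the last rupture divisor of the branch with semigroup \(\langle 4,6,13\rangle\), \(K=-1\) and \(\epsilon_{i_2}(\dd x)=-1/13\).) So ``\(\beta_2=0\) implies \(\alpha'_2+K>0\)'' is false, and one would need to use the finer arithmetic of the values \(\alpha_2\) can take --- they are constrained by the semigroup of \(E_i\) and are not arbitrary non-negative integers --- to rule out \(\alpha'_2=-K\). Note the paper's own proof of this corollary (``impossible since \(\alpha'_1,\alpha'_2\geq 0\)'') glosses over the same point, so you should at least be aware that the \(E_{i_2}\) direction genuinely requires more than a formal symmetry with \(E_{i_1}\).
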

\begin{proof}
    The converse implication follows from \cref{prop-rup2}. For the direct implication, we will argue by contradiction. If there is no arrowhead in the respective connected component, \(\beta_1 = 0\), respectively \(\beta_2 = 0\), by \cref{lemma-linear-valuations}. Hence, if there exists an \(\omega'\) such that \(\epsilon_{i_1}(\omega') = 0\), resp. \(\epsilon_{i_2}(\omega') = 0\), \cref{prop-eps-rup} would imply
    \[
        \alpha'_1 + 1 = 0 \qquad \textnormal{resp.} \qquad m_{g-1} +  n_1 \cdots n_{g-1} - n_{g-1} \overline{m}_{g-1} + \alpha'_2 = 0,
    \]
    which is impossible since \(\alpha'_1, \alpha'_2 \geq 0\). Hence, \(\epsilon_{i_1}(\omega') \neq 0\), resp. \(\epsilon_{i_2}(\omega') \neq 0\).
\end{proof}

Assuming that \(r_i \geq 3\), it is always possible to find elements \(f_v \in \mathscr{O}_{\mathbb{C}^2,x}, v \in T, r_v = 1,\) such that the arrowhead of the branch \(f_v = 0\) appears in the direction of a chosen divisor \(E_{i_3}, \dots, E_{i_{m_i}}\) and, moreover, \(v_i(f_v) = N_i(C_i) = n_g \overline{m}_g\). Denote by \(f_{v, j}\) any such element whose associated arrowheads appear in the direction of \(E_{j}, j = 3, \dots, m_i\).

\begin{center}
    \begin{picture}(500,105)(-30,-55)
        \put(160,23){\makebox(0,0){\(\vdots\)}}
        \put(180,20){\line(-2,-1){50}}
        \put(180,20){\line(-2,1){50}}
        \put(180,20){\circle*{4}}
        \put(180,20){\line(1,0){50}}
        \put(180,20){\line(0,-1){50}}

        \put(180,-30){\circle*{4}}
        \put(180,-36){\makebox(0,0){\(\vdots\)}}

        \put(230,20){\circle*{4}}
        \put(241,20){\makebox(0,0){\(\dots\)}}

        \put(180,32){\makebox(0,0){\(E_i\)}}
        \put(230,32){\makebox(0,0){\(E_{i_2}\)}}
        \put(197,-30){\makebox(0,0){\(E_{i_1}\)}}

        \put(115,45){\makebox(0,0){\(E_{i_3}\)}}
        \put(115,-5){\makebox(0,0){\(E_{i_{m_i}}\)}}

        \put(193,8){\makebox(0,0){\(n_{g}\)}}
        \put(200,30){\makebox(0,0){\(\overline{m}_{g}\)}}
        \put(160,42){\makebox(0,0){\(1\)}}
        \put(160,-1){\makebox(0,0){\(1\)}}
    \end{picture}
\end{center}

\begin{proposition} \label{eps-proposition}
    Let \(\omega \in \Omega^2_{\mathbb{C}^2, x}\) be a form such that \(-1 \leq \epsilon_{i_1}(\omega), \epsilon_{i_2}(\omega) \leq 1\). Then, there exists \(\omega' \in \Omega^2_{\mathbb{C}^2, x}\) such that
    \[
        -1 \leq \epsilon_{i_j}(\omega') < 1, \quad \textnormal{for} \quad j = 3, \dots, m_i, 
    \] 
    with \(\sigma_i(\omega') = \sigma_i(\omega'), \epsilon_{i_1}(\omega') = \epsilon_{i_1}(\omega),\) and \(\epsilon_{i_2}(\omega) = \epsilon_{i_2}(\omega')\). Moreover, \(\sigma_{v}(\omega') = \sigma_{v}(\omega)\), for all \(v \in T\) in the connected components of \(\Delta_\pi \setminus \{E_i\}\) containing \(E_{i_1}\) or \(E_{i_2}\).
\end{proposition}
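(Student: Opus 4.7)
The plan is to follow the same pattern as \cref{prop-rup2} but acting in the directions of $E_{i_3}, \dots, E_{i_{m_i}}$ rather than $E_{i_1}, E_{i_2}$. By \cref{prop-eps-rup}, for each $j \geq 3$ we have $\epsilon_{i_j}(\omega) = \alpha_j + 1 + \beta_j \sigma_i(\omega)$ with $\beta_j > 0$ since the log resolution is minimal, so placing $\epsilon_{i_j}(\omega')$ in $[-1, 1)$ is equivalent to choosing the non-negative integer $\alpha_j'$ inside a half-open window of length $2$; the strict upper bound $\epsilon_{i_j}(\omega') < 1$ comes for free from the strict inequality in \cref{cor-eps-rup}. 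First I would therefore reduce the task to producing $\omega'$ with prescribed $\alpha_j'$, $j \geq 3$, while leaving $\alpha_1, \alpha_2$ and every $v_v(g)$ for $v$ in the upper components untouched.

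The central move will be the substitution $f_{v,j} \leftrightarrow f_i$ inside the product expression for $g$, where $f_i$ is a branch transversal to $E_i$ at a general point. Because $v_i(f_{v,j}) = v_i(f_i) = n_g \overline{m}_g = N_i(C_i)$, this swap preserves $v_i(g)$ and hence $\sigma_i(\omega)$. The decisive invariance step is to check, via \eqref{eq-calculus-valuations} and the decorations $n_g, \overline{m}_g, 1, \dots, 1$ at $E_i$ supplied by \cref{lemma-valuations-2}, that $v_v(f_i) = v_v(f_{v,j})$ for every $v$ in the connected component of $\Delta_\pi \setminus \{E_i\}$ containing $E_{i_1}$ or $E_{i_2}$: the paths from such $v$ to the two arrowheads coincide up to $E_i$, and the off-path decorations at $E_i$ multiply to $\overline{m}_g$ in both cases because the only discrepancy, namely the edge toward $E_{i_j}$, is decorated $1$ next to $E_i$ and therefore does not alter the product. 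This keeps $v_v(g)$, and hence $\sigma_v(\omega)$, $\epsilon_{i_1}(\omega)$, and $\epsilon_{i_2}(\omega)$, unchanged. The same path computation shows that the swap is also invisible at $E_{i_k}$ for every $k \neq j$ with $k \geq 3$, so the adjustments for different $j$'s decouple and may be performed one at a time.

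Finally I would quantify the shift of $\alpha_j$ per swap. When $E_{i_j}$ is a dead-end of $\Delta_\pi^e$, so that $n_{g+1,j} = 1$, \cref{lemma-valuations-2} gives $v_{i_j}(f_i) - v_{i_j}(f_{v,j}) = -1$, whence each swap changes $\alpha_j$ by exactly one and iterating in the appropriate direction lands $\alpha_j'$ anywhere in the target window. In the general case I would combine swaps with the semigroup argument of \cref{prop-rup2}: by \cref{semigroup-prop}, $n_g \overline{m}_g$ admits a $\mathbb{Z}_{\geq 0}$-combination as $\sum_{k<g} \gamma_k \overline{\beta}_k$, and redistributing arrowheads via this decomposition yields unit adjustments of $\alpha_j$ without touching $\alpha_1$, $\alpha_2$ or the upper components. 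A minor secondary concern, that $f_i$ or $f_{v,j}$ may not appear in $g$ with the multiplicity needed to execute a swap directly, is handled exactly as in \cref{prop-rup2} by preliminarily augmenting $g$ with semigroup-balanced factors that cancel in the relevant valuations. The hard part will be the invariance identity $v_v(f_i) = v_v(f_{v,j})$ for all upper $v$: although essentially bookkeeping, it crucially depends on the fine decoration pattern around $E_i$ furnished by \cref{lemma-valuations-2} and must be verified throughout both upper components, not merely at the immediate neighbors $E_{i_1}$, $E_{i_2}$.
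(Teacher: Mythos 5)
Your core move — swapping $f_i \leftrightarrow f_{v,j}$ to shift $\alpha_j$ by one — is exactly the paper's, and your identification of the invariance $v_v(f_i) = v_v(f_{v,j})$ for upper $v$ as a consequence of the decoration pattern is correct (the paper dispatches it with a single appeal to \eqref{eq-calculus-valuations}). But you mislocate the difficulty. You call the possibility that $f_i$ does not appear in $g$ with the required multiplicity ``a minor secondary concern'' to be repaired by a semigroup re-expression as in \cref{prop-rup2}, ``redistributing arrowheads \dots without touching $\alpha_1$, $\alpha_2$ or the upper components''. That cannot be done: by \cref{semigroup-prop}, removing one $f_i$ (worth $n_g\overline{m}_g$ in $v_i$-valuation) requires adding maximal-contact factors $f_{j_0}^{\gamma_0}\cdots f_{j_{g-1}}^{\gamma_{g-1}}$ with $\sum \gamma_k \overline{\beta}_k = n_g\overline{m}_g$, and those branches have their arrowheads in the direction of $E_{i_2}$ (or $E_{i_1}$, for $f_{j_g}$), so they increase $\alpha_2$ (or $\alpha_1$) and hence change $\epsilon_{i_2}$ (or $\epsilon_{i_1}$) and the valuations in the upper components — precisely the data the proposition must preserve.

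The paper's proof does not need such a re-expression at all. It first replaces every arrowhead in the direction of $E_{i_j}$, $j\geq 3$, by copies of $f_i$ (this preserves $\sigma_v$ for all upper $v$ by \eqref{eq-calculus-valuations}), so that $\alpha_3 = \cdots = \alpha_{m_i} = 0$; the upper bounds $\epsilon_{i_j} < 1$ then follow immediately from \cref{prop-eps-rup}. The crucial step, which your sketch never invokes, is the adjunction relation of \cref{lemma-sumeps}: using the hypothesis $-1 \leq \epsilon_{i_1},\epsilon_{i_2} \leq 1$ and the already-established bounds $\epsilon_{i_k} < 1$ for $k\geq 3, k\neq j$, one derives $\epsilon_{i_j} \geq -\alpha_{m_i+1} - 1$. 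Thus whenever some $\epsilon_{i_j} < -1$ the multiplicity $\alpha_{m_i+1}$ of $f_i$ in $g$ is automatically positive, so a swap is always available; and if $\alpha_{m_i+1} = 0$ then the bound forces $\epsilon_{i_j} = -1$ and one is already done. This supply argument is the heart of the proof; without it your plan stalls the moment $f_i$ runs out, and the workaround you propose would break the very invariances the statement requires. Your secondary claim that the strict upper bound ``comes for free from \cref{cor-eps-rup}'' is also only valid once $\alpha_j = 0$, which is a consequence of the preliminary reduction rather than of the corollary alone.
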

\begin{proof}
    If \(r_i = 1, 2 \) there is either nothing to prove or the bounds follow from \cref{lemma-sumeps}. Otherwise, arguing as in \cref{prop-rup2}, we can assume that all \(f_v, v \in T,\) in the direction of \(E_{i_j}, j \in \{3, \dots, m_i\}\), are replaced by copies of \(f_i\). Notice that, by \eqref{eq-calculus-valuations}, this does not change \(\sigma_v(\omega)\), for any \(v \in T\) in the direction of \(E_{i_1}\) or \(E_{i_2}\). Therefore, we find that \(\alpha_3, \dots, \alpha_{m} = 0\) and \cref{prop-eps-rup} implies that \(\epsilon_{i_j}(\omega) < 1\), for \(j = 3, \dots, m_i\).

    \jump
        
    Hence, to obtain the lower bounds, assume there exists \(j \in \{3, \dots, m_i\}\) such that \(\epsilon_{i_j}(\omega) \leq -1\). Using \cref{lemma-sumeps},
    \begin{equation} \label{eq-eps-proposition}
        \epsilon_{i_j}(\omega) =  \alpha_{m_i+1} - \epsilon_{i_1}(\omega) - \epsilon_{i_2}(\omega) + m_i - 2 - \sum_{k = 3, k \neq j}^m \epsilon_{i_k}(\omega) \geq - \alpha_{m_i+1} -1
    \end{equation}
    where in the inequality we used the bounds for \(\epsilon_{i_1}(\omega), \epsilon_{i_2}(\omega)\) and the inequalities \(\epsilon_{i_j}(\omega) < 1\), for \(j = 3, \dots, m_i\). Therefore,
    \[
        -\alpha_{m_i+1} - 1 \leq \epsilon_{i_j}(\omega) \leq -1,
    \]
    implies that \(\alpha_{m_i+1} \geq 0\). If \(\alpha_{m_i+1} = 0\), then \(\epsilon_{i_j}(\omega) = -1\) and we are done. Otherwise, we can define a new \(\omega' \in \Omega^2_{\mathbb{C}^2, x}\) by replacing one factor \(f_i\) of \(\omega\) by \(f_{v, j}, v \in T, r_v = 1 \). This way,
    \[ \sigma_i(\omega') = \sigma_i(\omega), \quad \epsilon_{i_j}(\omega') = \epsilon_{i_j}(\omega) + 1 \quad \textnormal{and} \quad \epsilon_{i_k}(\omega') = \epsilon_{i_k}(\omega) \quad \textnormal{for all}\quad k \neq j, \]
    Finally, either \( -1 \leq \epsilon_{i_j}(\omega') < 1 \), or we can replace \( \omega \) by \( \omega' \) and repeat the same process until the lower bound is satisfied. As before, \eqref{eq-calculus-valuations} implies that \(\sigma_v(\omega') = \sigma_v(\omega)\), for all \(v \in T\) appearing in the direction of \(E_{i_1}\) or \(E_{i_2}\).
\end{proof}

\begin{corollary} \label{cor-eps-proposition}
    If \( \epsilon_{i_j}(\omega) = -1 \), for some \(j \in \{3, \dots, m_i\}\), then \(\epsilon_{i_1}(\omega) = \epsilon_{i_2}(\omega) = 1\) and either \( r_i = 2 \) or \( r_i = 3 \) and \( E_i \) is satellite.
\end{corollary}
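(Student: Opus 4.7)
The plan is to combine the identity from \cref{lemma-sumeps} with the upper bounds provided by \cref{prop-rup2,eps-proposition}. Isolating $\epsilon_{i_j}(\omega)$ in that identity gives
\[
\epsilon_{i_j}(\omega) \;=\; m_i - 2 + \alpha_{m_i+1} - \epsilon_{i_1}(\omega) - \epsilon_{i_2}(\omega) - \sum_{\substack{k=3\\ k\neq j}}^{m_i} \epsilon_{i_k}(\omega).
\]
Plugging in $\epsilon_{i_1}(\omega), \epsilon_{i_2}(\omega) \leq 1$ together with the strict bounds $\epsilon_{i_k}(\omega) < 1$ for $k\in\{3,\dots,m_i\}$ from \cref{eps-proposition}, the right-hand side satisfies
\[
\epsilon_{i_j}(\omega) \;>\; m_i - 2 + \alpha_{m_i+1} - 2 - (m_i - 3) \;=\; \alpha_{m_i+1} - 1 \;\geq\; -1,
\]
as soon as the last sum contains at least one term, i.e., $m_i \geq 4$. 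This contradicts the hypothesis $\epsilon_{i_j}(\omega) = -1$, so necessarily $m_i = 3$, and hence $j = 3$.

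With $m_i = 3$ the identity degenerates to $\epsilon_{i_1}(\omega) + \epsilon_{i_2}(\omega) = 2 + \alpha_{m_i+1}$. Combined with the upper bounds $\epsilon_{i_1}(\omega), \epsilon_{i_2}(\omega) \leq 1$ and $\alpha_{m_i+1} \geq 0$, this forces equality throughout, yielding $\epsilon_{i_1}(\omega) = \epsilon_{i_2}(\omega) = 1$ and $\alpha_{m_i+1} = 0$; this establishes the first half of the corollary.

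For the dichotomy on $r_i$ I would unpack the labelling conventions of \cref{sec-neigh-valuations}: the slot $E_{i_1}$ is non-zero precisely when $E_i$ admits two exceptional divisors on paths to preceding divisors, which is exactly the condition that $E_i$ be satellite. Since $\epsilon_{i_3}(\omega) = -1 \neq 1$ forces $E_{i_3}$ to be an actual neighbour and $m_i = 3$ caps the total count of neighbours, the only possibilities left are $r_i = 2$ (with $E_{i_1} = 0$, i.e., $E_i$ free) and $r_i = 3$ (with $E_{i_1} \neq 0$, forcing $E_i$ satellite), as required. The main delicate point is the careful handling of strict versus non-strict inequalities in \cref{eps-proposition}: the strictness $\epsilon_{i_k}(\omega) < 1$ for $k \geq 3$ is precisely what rules out $m_i \geq 4$, and without it the whole argument fails.
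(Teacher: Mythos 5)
Your proof reproduces a sign that appears to be a typo in the paper's equation \eqref{eq-eps-proposition}, and this sign error is load-bearing for your argument. Isolating $\epsilon_{i_j}(\omega)$ in \cref{lemma-sumeps} (using $\delta_{i_1}(\omega)=\gamma_i=\alpha_{m_i+1}$ from \cref{cor-valuations-3} and the discussion following \cref{lemma-sumeps}) gives
\[
\epsilon_{i_j}(\omega) = m_i - 2 - \alpha_{m_i+1} - \epsilon_{i_1}(\omega) - \epsilon_{i_2}(\omega) - \sum_{\substack{k=3\\ k\neq j}}^{m_i} \epsilon_{i_k}(\omega),
\]
with a \emph{minus} sign in front of $\alpha_{m_i+1}$. (One can confirm this is the intended sign because it is the one consistent with the stated lower bound $\geq -\alpha_{m_i+1}-1$ in \eqref{eq-eps-proposition}, and with the replacement step in the proof of \cref{eps-proposition} where increasing $\epsilon_{i_j}$ by one decreases $\alpha_{m_i+1}$ by one.) With the correct sign, your chain of inequalities only yields $\epsilon_{i_j}(\omega) > -\alpha_{m_i+1}-1$ for $m_i\geq 4$, which does not contradict $\epsilon_{i_j}(\omega)=-1$ when $\alpha_{m_i+1}>0$; and for $m_i=3$ the identity becomes $\epsilon_{i_1}(\omega)+\epsilon_{i_2}(\omega)=2-\alpha_{m_i+1}$, which the upper bounds $\leq 1$ no longer force to be an equality. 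So both halves of your derivation of $\alpha_{m_i+1}=0$ break down.

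The paper closes this gap with a genuinely separate argument that you omit: if $\alpha_{m_i+1}>0$, one can replace a factor $f_i$ of $\omega$ by a branch $f_{v,j}$ through $E_{i_j}$, raising $\epsilon_{i_j}$ to $0$ while keeping $\sigma_i(\omega)$ fixed, which shows $\epsilon_{i_j}(\omega)=-1$ cannot persist for the optimal form unless $\alpha_{m_i+1}=0$. Only after this normalization does the numerical bookkeeping with \cref{lemma-sumeps} force $m_i=3$ and $\epsilon_{i_1}(\omega)=\epsilon_{i_2}(\omega)=1$. Your final step deducing the dichotomy on $r_i$ from the conventions of \cref{sec-neigh-valuations} is fine and matches the paper; the missing ingredient is the replacement argument establishing $\alpha_{m_i+1}=0$, which cannot be replaced by inequalities alone once the sign is corrected.
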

\begin{proof}
    If \(\epsilon_{i_j}(\omega) = -1\), then necessarily \(\alpha_{m_i+1} = 0\), otherwise we can repeat the same process and define \(\omega'\) such that \(\epsilon_{i_j}(\omega') = \epsilon_{i_j}(\omega) + 1\) and \(\sigma_{i}(\omega') = \sigma_i(\omega)\). After \eqref{eq-eps-proposition}, we see that \(\epsilon_{i_j}(\omega) = -1\) is only possible if \(m = 3\) and \(\epsilon_{i_1}(\omega) = \epsilon_{i_2}(\omega) = 1\). Hence, either \(r_i = 2\) or \(r_i = 3\) and \(E_i\) is satellite.
\end{proof}

When \(r_i \geq 3\), the results from \cref{eps-proposition} are optimal in the sense that it is possible to find examples where \(g\) must necessarily have components \(f_i\), that is \(\alpha_{m_i+1} > 0\) if all bounds are to be satisfied.

\subsection{The exceptional case \texorpdfstring{\((\dag)\)}{(dag)}}

The case of a satellite rupture divisor with \( r_i = 3 \) is somehow especial since we might have \( \epsilon_{i_3}(\omega) = -1 \). We will characterize this case next.

\begin{lemma} \label{lemma-rup3}
    Let \( E_i \) be a satellite divisor with \( r_i = 3 \). Then, with the notations from \cref{lemma-linear-valuations}, \( \beta_1, \beta_2 = 0 \), if and only if
    \begin{equation} \label{eq-lemma-rup3}
         N_i = \lambda n_g \overline{m}_g, \quad N_{i_1} = \lambda a_g \overline{m}_g, \quad \textnormal{and} \quad N_{i_2} = \lambda n_g(c_g n_{g-1} \overline{m}_{g-1} + d_g),
    \end{equation}
    for some \( \lambda \in \mathbb{Z}_{>0} \). 
\end{lemma}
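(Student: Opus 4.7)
The plan is to derive the equivalence essentially as a direct algebraic consequence of \cref{lemma-linear-valuations} (together with \cref{lemma-valuations-1} and \cref{lemma-valuations-2} in the background), with no deep input beyond the coprimality \(\gcd(n_g, \overline{m}_g) = 1\).

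For the converse direction \((\Leftarrow)\), I would simply substitute the three proportionality relations into the formulas
\[ N_{i_1} = \frac{a_g}{n_g} N_i + \frac{\beta_1}{n_g}, \qquad N_{i_2} = \frac{c_g n_{g-1}\overline{m}_{g-1} + d_g}{\overline{m}_g} N_i + \frac{\beta_2}{\overline{m}_g} \]
from \cref{lemma-linear-valuations} and check that both \(\beta_1\) and \(\beta_2\) vanish identically. This is a one-line computation in each case.

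For the forward direction \((\Rightarrow)\), I would start from the decompositions
\[ N_i = n_g \beta'_1 + \overline{m}_g \beta_1, \qquad N_i = \overline{m}_g \beta'_2 + n_g \beta_2 \]
obtained by applying \cref{lemma-valuations-1} (with the decorations computed in \cref{lemma-valuations-2}) along the edges joining \(E_i\) to \(E_{i_1}\) and to \(E_{i_2}\), respectively. Setting \(\beta_1 = \beta_2 = 0\) gives \(N_i = n_g \beta'_1 = \overline{m}_g \beta'_2\), so that \(n_g \mid N_i\) and \(\overline{m}_g \mid N_i\). Since \(\gcd(n_g, \overline{m}_g) = 1\) (a standard consequence of \(\gcd(q_g, n_g) = 1\) and the recursion \eqref{eq-char2semi}, or equivalently of the edge determinant rule applied to the edge \(E_i\)-\(E_{i_1}\)), we conclude \(n_g \overline{m}_g \mid N_i\) and may write \(N_i = \lambda n_g \overline{m}_g\) with \(\lambda \in \mathbb{Z}_{>0}\). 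Plugging this back into the formulas from \cref{lemma-linear-valuations} with \(\beta_1 = \beta_2 = 0\) then yields \(N_{i_1} = \lambda a_g \overline{m}_g\) and \(N_{i_2} = \lambda n_g(c_g n_{g-1}\overline{m}_{g-1} + d_g)\), matching the values of \(N_i(C_i), N_{i_1}(C_i), N_{i_2}(C_i)\) computed in \cref{lemma-valuations-3}.

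There is no real obstacle here; the hypothesis that \(E_i\) is satellite with \(r_i = 3\) plays no role in the algebraic manipulation (indeed the argument works whenever the two formulas of \cref{lemma-linear-valuations} apply) and is included only to match the context of the exceptional case \((\dag)\) of \cref{thm-main-1}. The only non-automatic input is the coprimality \(\gcd(n_g, \overline{m}_g) = 1\), which I would simply cite from the general theory of Puiseux pairs recalled in \cref{semigroup}.
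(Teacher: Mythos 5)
Your proof is correct. The forward direction ($\beta_1 = \beta_2 = 0 \Rightarrow$ proportionality) follows exactly the paper's line: write $N_i = n_g\beta'_1 + \overline{m}_g\beta_1 = \overline{m}_g\beta'_2 + n_g\beta_2$, set $\beta_1 = \beta_2 = 0$, use $\gcd(n_g, \overline{m}_g) = 1$ to deduce $n_g\overline{m}_g \mid N_i$, and propagate. Your converse direction, however, takes a genuinely different and more direct route. You substitute the proportionality relations \eqref{eq-lemma-rup3} into the two identities of \cref{lemma-linear-valuations} and read off $\beta_1 = \beta_2 = 0$ by cancellation; this is a one-line check needing nothing beyond \cref{lemma-linear-valuations} itself. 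The paper instead computes $\epsilon_{i_1}(\dd x)$ and $\epsilon_{i_2}(\dd x)$ explicitly from the expressions for $N_{i_j}$ and $k_{i_j}$ in \cref{lemma-valuations-3}, and then appeals to \cref{prop-eps-rup} together with $\sigma_i(\dd x) < 0$ to force $\beta_1 = \beta_2 = 0$. Both are valid; your substitution is the shorter argument and avoids invoking \cref{lemma-valuations-3} and \cref{prop-eps-rup}, while the paper's version has the side benefit of producing the explicit values of the residue numbers $\epsilon_{i_j}(\dd x)$ in the case $(\dag)$, which are reused in \cref{lemma-rup3-2}. One small caveat about your remark that the satellite hypothesis plays no role: it does guarantee that both $E_{i_1}$ and $E_{i_2}$ are genuine exceptional divisors rather than the formal zero divisor of the conventions in \cref{sec-neigh-valuations}, which is what makes both identities of \cref{lemma-linear-valuations} available simultaneously.
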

\begin{proof}
    Start assuming that \( \beta_1, \beta_2 = 0 \). Recall that
    \[ N_i = n_1 \beta'_1 + \overline{m}_g \beta_1 = n_1 \beta_2 + \overline{m}_g \beta'_2, \]
    \[ N_{i_1} = a_g \beta'_1 + (a_g n_{g-1} \overline{m}_{g-1} + b_g) \beta_1, \quad \textnormal{and} \quad N_{i_2} = c_g \beta_2 + (c_g n_{g-1} \overline{m}_{g-1} + d_g) \beta'_2. \]
    If \( \beta_1 = 0 \), then \( N_i \) and \( N_{i_1} \) are divisible by \( n_g \) and \( a_g \), respectively. Moreover, \( N_i / N_{i_1} = n_g / a_g \). On the other hand, if \( \beta_2 = 0 \), we have
    \[ N_i = \overline{m}_g \beta'_1, \quad \textnormal{and} \quad N_{i_2} = (c_g n_{g-1} \overline{m}_{g-1} + d_g) \beta'_2. \]
    Therefore, since \( n_g, \overline{m}_g \) are coprime, \( \beta'_1 = \lambda n_g \) for some \( \lambda \in \mathbb{Z}_{>0} \). This proves the first implication. For the other implication, a direct computation using the expressions for \(N_i, N_{i_1}, N_{i_2}\) from \eqref{eq-lemma-rup3} and \(k_i, k_{i_1}, k_{i_2}\) from \cref{lemma-valuations-3} shows that 
    \[
        \epsilon_{i_1}(\dd x) = \frac{1}{n_i}, \quad \textnormal{and} \quad \epsilon_{i_2}(\dd x) = \frac{m_{g-1} + n_1 \cdots n_{g-1} - n_{g-1}\overline{m}_{g-1}}{\overline{m}_g}.
    \]
    Thus, \( \beta_1, \beta_2  = 0 \) by \cref{prop-eps-rup}. 
\end{proof}

Recall that \( C_i \) denoted a plane branch whose strict transform intersects \( E_i \) transversally at a general point. If \( N_i(C_i), N_{i_1}(C_i) \) and \( N_{i_2}(C_i) \) are the corresponding multiplicities of its total transform along \( E_i, E_{i_1} \) and \( E_{i_2} \), then they are equal to \eqref{eq-lemma-rup3} with \(\lambda = 1\) by \cref{lemma-valuations-3}.

\begin{lemma} \label{lemma-rup3-2}
    Let \(E_i\) be a satellite divisor with \(r_i = 3\) and such that
    \[ N_i = \lambda N_i(C_i), \quad N_{i_1} = \lambda N_{i_1}(C_i), \quad \textnormal{and} \quad N_{i_2} = \lambda N_{i_2}(C_i), \]
    for some \( \lambda \in \mathbb{Z}_{>0} \). For any differential form \(\omega \in \Omega^2_{\mathbb{C}^2, x}\), one has that \(\epsilon_{i_1}(\omega), \epsilon_{i_2}(\omega) > 0\). Moreover, if \(\epsilon_{i_3}(\omega) \in \mathbb{Z}\), then \(\epsilon_{i_1}(\omega), \epsilon_{i_2}(\omega) \in \mathbb{Z}\) and \(\epsilon_{i_3}(\omega) \leq -1\).
\end{lemma}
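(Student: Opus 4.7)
The plan is to first invoke Lemma \ref{lemma-rup3}, which translates the numerical hypothesis of the statement into the vanishings $\beta_1 = \beta_2 = 0$ in the notation of Lemma \ref{lemma-linear-valuations}. Substituting these into Proposition \ref{prop-eps-rup}, the residue numbers reduce to the closed forms
\[
\epsilon_{i_1}(\omega) = \frac{\alpha_1 + 1}{n_g}, \qquad \epsilon_{i_2}(\omega) = \frac{\alpha_2 + m_{g-1} + n_1\cdots n_{g-1} - n_{g-1}\overline{m}_{g-1}}{\overline{m}_g},
\]
both independent of $\sigma_i(\omega)$. In particular, $\epsilon_{i_1}(\omega) \geq 1/n_g > 0$ follows directly from $\alpha_1 \in \mathbb{Z}_{\geq 0}$ and $n_g \geq 2$.

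For the positivity of $\epsilon_{i_2}(\omega)$, the plan is to use Corollary \ref{cor-valuations-3} together with \cref{semigroup-prop} to obtain an effective lower bound on $\alpha_2$ in terms of the semigroup $\Gamma_i$ of $E_i$. The recursions $m_g = n_g m_{g-1} + q_g$ and $\overline{m}_g = n_g n_{g-1}\overline{m}_{g-1} + q_g$ give the identity
\[
n_g \bigl(m_{g-1} + n_1\cdots n_{g-1} - n_{g-1}\overline{m}_{g-1}\bigr) = k_i - \overline{m}_g,
\]
which, combined with the admissible decomposition of $v_i(g)$ and a semigroup-membership argument analogous to the one used in the proof of Proposition \ref{prop-rup2}, reduces strict positivity to an arithmetic lower bound on $\alpha_2$.

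For the moreover clause, apply Lemma \ref{lemma-sumeps}. Under $\beta_1 = \beta_2 = 0$ and the standing convention that $\omega$ is built from maximal contact elements (cf.\ the opening of Section 4), the only non-$F_\pi$ components that can meet $E_i$ appear on the $E_{i_3}$ side, so the identity from Lemma \ref{lemma-sumeps} collapses to $\epsilon_{i_1}(\omega) + \epsilon_{i_2}(\omega) + \epsilon_{i_3}(\omega) \in \mathbb{Z}$. If $\epsilon_{i_3}(\omega) \in \mathbb{Z}$, then $\epsilon_{i_1}(\omega) + \epsilon_{i_2}(\omega) \in \mathbb{Z}$; using the closed forms above, $\epsilon_{i_1}(\omega)$ has denominator dividing $n_g$ and $\epsilon_{i_2}(\omega)$ has denominator dividing $\overline{m}_g$. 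Since $\gcd(n_g, \overline{m}_g) = 1$ (a consequence of the edge determinant rule \eqref{eq-edgedet}), each must individually be an integer. Finally, $\epsilon_{i_1}(\omega), \epsilon_{i_2}(\omega) \in \mathbb{Z}_{>0}$ forces both to be at least $1$, and the sum identity then yields $\epsilon_{i_3}(\omega) \leq -1$.

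The main obstacle is the strict positivity of $\epsilon_{i_2}(\omega)$: since $m_{g-1} + n_1\cdots n_{g-1} - n_{g-1}\overline{m}_{g-1}$ can itself be negative for $g \geq 2$, one cannot simply rely on $\alpha_2 \geq 0$. The argument must carefully extract a semigroup-theoretic lower bound on $\alpha_2$ from the decomposition of $v_i(g)$ in \cref{cor-valuations-3}, in the spirit of the conductor-based manipulations in the proof of Proposition \ref{prop-rup2}. The integrality propagation and the bound $\epsilon_{i_3}(\omega) \leq -1$ are then straightforward consequences of coprimality and the sum relation.
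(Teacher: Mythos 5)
Your proposal follows the paper's route exactly — apply Lemma~\ref{lemma-rup3} to obtain $\beta_1 = \beta_2 = 0$, substitute into Proposition~\ref{prop-eps-rup} to get the closed formulas, and then handle the ``moreover'' clause via Lemma~\ref{lemma-sumeps} and coprimality of $n_g$ and $\overline{m}_g$. The coprimality argument and the final inequality $\epsilon_{i_3}(\omega)\leq -1$ are exactly as in the paper, and that part is fine.

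You are also right to single out the strict positivity of $\epsilon_{i_2}(\omega)$ as the point requiring care: the paper's phrase ``from which it follows'' conceals a genuine difficulty, since $m_{g-1} + n_1\cdots n_{g-1} - n_{g-1}\overline{m}_{g-1}$ equals $k_{i'} - N_{i'}(C_{i'})$ for the preceding rupture divisor $E_{i'}$, which is negative whenever $g\geq 2$. However, the fix you propose — a semigroup lower bound on $\alpha_2$ — cannot work, because the assertion $\epsilon_{i_2}(\omega)>0$ is simply false as stated: taking $\omega=\dd x$ gives $\alpha_2=0$, and then $\epsilon_{i_2}(\dd x)=(k_i-\overline{m}_g)/(n_g\overline{m}_g)<0$ whenever $g\geq 2$. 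Concretely, for $f=(y^2-x^3)^2-x^5y$ (the $s=3$ case of Example~\ref{ex100}) at the last rupture $E_4$ one has $N_4=26$, $k_4=11$, and $E_{i_2}=E_2$ with $N_2=12$, $k_2=5$, so $\epsilon_{i_2}(\dd x)=12\cdot(-11/26)+5=-1/13<0$, while the numerical hypothesis of the lemma is satisfied with $\lambda=1$. So no amount of bounding $\alpha_2$ from below can rescue the unconditional positivity.

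What the ``moreover'' clause actually needs — and what can be salvaged — is the implication ``$\epsilon_{i_2}(\omega)\in\mathbb{Z}\Rightarrow\epsilon_{i_2}(\omega)\geq 1$''. One always has $\epsilon_{i_2}(\omega)>-1$ (because $\overline{m}_g+(k_i-\overline{m}_g)/n_g>0$), so an integral $\epsilon_{i_2}(\omega)$ is automatically $\geq 0$; what remains is to rule out $\epsilon_{i_2}(\omega)=0$, i.e.\ $\alpha_2 = N_{i'}(C_{i'})-k_{i'}$, and this is where a genuine semigroup/gap argument is required — a step that neither your writeup nor the paper's one-line ``it follows'' actually supplies. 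Without that, the chain $\epsilon_{i_3}=1-\epsilon_{i_1}-\epsilon_{i_2}-\sum\delta$ only yields $\epsilon_{i_3}\leq 0$, not $\leq -1$, which is insufficient for the application in Corollary~\ref{cor-eps-prop}.
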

\begin{proof}
    After \cref{lemma-rup3} and \cref{prop-eps-rup},
    \[
        \epsilon_{i_1}(\omega) = \frac{\alpha_1 + 1}{n_g}, \quad \epsilon_{i_2}(\omega) = \frac{\alpha_2 + m_{g-1} + n_1 \cdots n_{g-1} - n_{g-1} \overline{m}_{g-1}}{\overline{m}_g},
    \]
    from which it follows that \(\epsilon_{i_1}(\omega), \epsilon_{i_2}(\omega) > 0\). Now, if \(\epsilon_{i_3}(\omega) \in \mathbb{Z}\), \cref{lemma-sumeps} implies that \(\epsilon_{i_1}(\omega) + \epsilon_{i_2}(\omega) \in \mathbb{Z}\). But since the denominators \(n_g\) and \(\overline{m}_g\) are coprime, this can only happen if \(\epsilon_{i_1}(\omega)\) and \(\epsilon_{i_2}(\omega)\) are also integers. In particular, \(\epsilon_{i_1}(\omega), \epsilon_{i_2}(\omega) \geq 1 \). Finally, \cref{lemma-sumeps} gives 
    \[\epsilon_{i_3}(\omega) \leq 1 - \epsilon_{i_1}(\omega) - \epsilon_{i_2}(\omega),\]
    from which we deduce that \(\epsilon_{i_3}(\omega) \leq -1\).
\end{proof}

\begin{corollary} \label{cor-eps-prop}
    Let \( E_i \) be a satellite divisor with \( r_i = 3 \) and \( \omega \in \Omega^2_{\mathbb{C}^2, x} \) a form obtained from \cref{eps-proposition}. If we have that \( \epsilon_{i_3}(\omega) = -1 \), then there exists \( \omega' \in \Omega^2_{\mathbb{C}^2, x} \) such that \( \epsilon_{i_3}(\omega') = 0 \) and \( \epsilon_{i_j}(\omega') = 0 \), for \( j = 1 \) or \( 2 \), with \( \sigma_i(\omega') = \sigma_i(\omega) \) if and only if
    \begin{equation} \label{eq-cor-eps-prop}
         N_i = \lambda N_i(C_i), \quad N_{i_1} = \lambda N_{i_1}(C_i), \quad \textnormal{and} \quad N_{i_2} = \lambda N_{i_2}(C_i),
    \end{equation}
    does not hold for any \( \lambda \in \mathbb{Z}_{>0} \).
\end{corollary}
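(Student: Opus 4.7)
The statement is a biconditional, so the plan naturally splits in two.

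For the forward direction (if such an $\omega'$ exists, then $(\dag)$ fails), I would argue by contraposition. If the numerical equalities in \cref{eq-cor-eps-prop} hold for some $\lambda$, then \cref{lemma-rup3-2} applies directly and yields $\epsilon_{i_1}(\omega'), \epsilon_{i_2}(\omega') > 0$ for \emph{every} differential form $\omega' \in \Omega^2_{\mathbb{C}^2, x}$. In particular, $\epsilon_{i_j}(\omega')$ can never vanish for $j = 1$ or $2$, contradicting the existence of the claimed $\omega'$. This direction is essentially immediate from the lemmas already established.

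For the reverse direction, assume \cref{eq-cor-eps-prop} fails for every $\lambda$. By \cref{lemma-rup3} this is equivalent to $\beta_1 > 0$ or $\beta_2 > 0$; I would treat the case $\beta_1 > 0$ in detail, the other being symmetric. First I extract the key numerical consequences: since $\epsilon_{i_3}(\omega) = -1$, \cref{cor-eps-proposition} forces $\epsilon_{i_1}(\omega) = \epsilon_{i_2}(\omega) = 1$, and the normalization built into the proofs of \cref{prop-rup2,eps-proposition} ensures $\alpha_3 = \cdots = \alpha_{m_i+1} = 0$. Then \cref{prop-eps-rup} combined with $\alpha_3 = 0$ yields $\beta_3 \sigma_i(\omega) = -2$, while $\epsilon_{i_1}(\omega) = 1$ together with $\beta_1 > 0$ and $\sigma_i(\omega) < 0$ forces $\alpha_1 + 1 + \beta_1 \sigma_i(\omega) = n_g$ with $\beta_1 \sigma_i(\omega) \in \mathbb{Z}_{<0}$, hence $\alpha_1 \geq n_g$.

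Next, I would construct $\omega' = g' \dd x$ aiming at the $\alpha'$-parameters
\[
\alpha'_1 = \alpha_1 - n_g, \quad \alpha'_2 = \alpha_2, \quad \alpha'_3 = 1, \quad \alpha'_j = 0 \text{ for } j \geq 4.
\]
The relation in \cref{cor-valuations-3} then gives $v_i(g') = v_i(g) - n_g\overline{m}_g + n_g\overline{m}_g = v_i(g)$, so $\sigma_i(\omega') = \sigma_i(\omega)$. Plugging these values into \cref{prop-eps-rup} yields $\epsilon_{i_1}(\omega') = \epsilon_{i_1}(\omega) - 1 = 0$, $\epsilon_{i_2}(\omega') = \epsilon_{i_2}(\omega) = 1$, and $\epsilon_{i_3}(\omega') = 1 + 1 + \beta_3\sigma_i(\omega) = 0$, exactly as desired. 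Concretely, $g'$ is obtained from $g$ by replacing the $f_{j_g}$-factors (whose arrowhead lies in the direction of $E_{i_1}$) corresponding to $n_g$ units of $\overline{m}_g$ with one branch $h \in \mathscr{O}_{\mathbb{C}^2, x}$ transversal to $E_{i_3}$ satisfying $v_i(h) = n_g\overline{m}_g$, using \cref{semigroup-prop} to rewrite $n_g \overline{\beta}_g \in \langle \overline{\beta}_0, \dots, \overline{\beta}_{g-1}\rangle$ when needed to keep the exponents in the semigroup decomposition of $g'$ non-negative.

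The main obstacle is precisely this last bookkeeping step: verifying that the target $\alpha'$-values are realized by an actual $g' \in \mathscr{O}_{\mathbb{C}^2, x}$ written as a product of the allowed maximal contact elements. Once one unwinds the product form of $g$ from the proofs of \cref{prop-rup2,eps-proposition} and applies \cref{semigroup-prop} to dispose of excess exponents, the construction is straightforward, but the combinatorics of keeping track of which factors live in which direction of the dual graph is the only delicate point.
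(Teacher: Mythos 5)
Your proof is correct and unpacks the argument that the paper compresses into ``use \cref{prop-rup2,eps-proposition}'': your explicit trade $\alpha'_1 = \alpha_1 - n_g$, $\alpha'_3 = 1$ (realized by swapping $f_{j_g}^{n_g}$ for an element transversal to a dead-end in the $E_{i_3}$ direction) is exactly the trade those propositions perform, and your verification via \cref{prop-eps-rup} that $\epsilon_{i_1}(\omega') = \epsilon_{i_3}(\omega') = 0$ while $\sigma_i$ is preserved is right. Your forward direction invokes the positivity statement of \cref{lemma-rup3-2} on $\epsilon_{i_1},\epsilon_{i_2}$ rather than the $\epsilon_{i_3}\leq -1$ statement, which the paper uses; both readings of that lemma give the contradiction equally quickly.

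One caution on the case you dismiss as ``symmetric'' ($\beta_1 = 0$, $\beta_2 > 0$): the formula for $\epsilon_{i_2}$ in \cref{prop-eps-rup} carries the extra integer constant $K = m_{g-1} + n_1\cdots n_{g-1} - n_{g-1}\overline{m}_{g-1}$, unlike the simple ``$+1$'' in the numerator of $\epsilon_{i_1}$. From $\epsilon_{i_2}(\omega) = 1$ and $\beta_2\sigma_i(\omega)\in\mathbb{Z}_{\leq -1}$ you get $\alpha_2 \geq \overline{m}_g - K + 1$, and the required $\alpha_2 \geq \overline{m}_g$ (needed to perform the analogous trade $\alpha'_2 = \alpha_2 - \overline{m}_g$, $\alpha'_3 = 1$) therefore uses the bound $K \leq 1$, which follows from nonnegativity of the conductor of the valuation one level below $E_i$, namely $c(\Gamma_{g-1}) = n_{g-1}\overline{m}_{g-1} - m_{g-1} - n_1\cdots n_{g-1} + 1 \geq 0$. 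The case closes, but it is not a verbatim mirror of the $\beta_1 > 0$ computation and needs this extra semigroup input. The ``bookkeeping'' worry you flag at the end dissolves if, as in the proof of \cref{prop-rup2}, you define $g'$ from scratch as a product of maximal contact elements of $v_i$ together with one copy of $f_{v,3}$, with exponents matching the target $\alpha'$'s, instead of modifying $g$ factor by factor.
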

\begin{proof}
    Assuming that \eqref{eq-cor-eps-prop} does not hold for any \( \lambda \in \mathbb{Z}_{>0} \), \cref{lemma-rup3} implies that either \( \beta_1 > 0 \) or \( \beta_2 > 0 \). In either case, we can use \cref{prop-rup2,eps-proposition} to construct \( \omega' \in \Omega^2_{\mathbb{C}^2, x} \) with \( \sigma_i(\omega') = \sigma_i(\omega) \) such that \(\epsilon_{i_3}(\omega') = 0\) and either \( \epsilon_{i_1}(\omega') = 0 \) or \( \epsilon_{i_2}(\omega') = 0 \). For the converse implication, if we can find an \(\omega'\) such that \(\epsilon_{i_3}(\omega') = 0\), then it is not possible that \eqref{eq-cor-eps-prop} holds form some \(\lambda \in \mathbb{Z}_{>0}\), since this would contradict \cref{lemma-rup3-2}.
\end{proof}

The numerical condition in \cref{cor-eps-prop} can be understood as the fact that the curve \( C \) does not look like the power of an irreducible curve locally around the exceptional divisor \( E_i \). 


\begin{proof}[Proof of \cref{thm-main-1}]
    The bounds and the last part follow from \cref{prop-rup2,eps-proposition}. The second part follows from \cref{lemma-sumeps} and \cref{cor-eps-proposition,cor-eps-prop}.
\end{proof}

\section{Admissible chains} \label{admissible}

Let \(\omega \in \Omega^2_{\mathbb{C}^2, x}\) be a top differential form. Denote by \(\mathcal{M}(\omega)\) the induced subgraph of the dual graph \(\Delta_\pi\) spanned by those edges \(E_i,E_j\) such that \(\sigma_i(\omega) = \sigma_j(\omega)\). An admissible chain on \(\Delta_\pi\) is a connected path on \(\mathcal{M}(\omega)\) with endpoints being either arrowheads or rupture divisors, and such that
\begin{equation} \label{eq-bound-admissible}
 -1 < \epsilon_{j_k}(\omega) \leq 1, \quad \textnormal{for} \quad k = 1, \dots, m_j,
\end{equation} 
for all \(E_j\) on the path. Moreover, \(\epsilon_{j_k}(\omega) = 1\) can only occur for \(E_{j_1}\), resp. \(E_{j_2}\), if there are no arrowheads in the connected component of \(\Delta_\pi \setminus \{E_j\}\), containing \(E_{j_1}\), resp. \(E_{j_2}\). An admissible path is called maximal if, in addition, both endpoints \(E_j\) of the path satisfy \(\epsilon_{j_k}(\omega) \neq 0 \), for all \(k = 1, \dots, m_j\), except for possibly \(\epsilon_{j_2}(\omega)\).

\begin{proposition} \label{prop-admissible-chains}
If \(\omega \in \Omega^2_{\mathbb{C}^2, x}\) has a residue number vanishing at a rupture divisor \(E_i\), then there is \(\omega' \in \Omega^2_{\mathbb{C}^2, x}\) such that \(\mathcal{M}(\omega')\) has a maximal admissible chain containing \(E_i\).
\end{proposition}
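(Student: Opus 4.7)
The plan is to build the chain by extending from $E_i$ in the direction of $E_{i_j}$, propagating the vanishing residue via \cref{lemma-sumeps} across valency-two vertices, and invoking \cref{thm-main-1} at each rupture vertex on the way to enforce the admissibility bounds \eqref{eq-bound-admissible}.

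First, observe that $\epsilon_{i_j}(\omega) = 0$ is equivalent to $\sigma_{i_j}(\omega) = \sigma_i(\omega)$ by the defining formula of the residue numbers, so $E_i E_{i_j}$ is already an edge of $\mathcal{M}(\omega)$. Walk from $E_i$ into $E_{i_j}$: if $E_{i_j}$ is an arrowhead, we terminate. If $E_{i_j}$ has valency two, we may assume $g$ consists of maximal contact factors supported only at rupture divisors (as prescribed in \cref{sec-bounds-residue}), so the $\delta$ contributions at $E_{i_j}$ vanish. Then one of the two residues at $E_{i_j}$ equals $N_i \sigma_{i_j}(\omega) + k_i + v_i(g) = 0$, and \cref{lemma-sumeps} forces the second to vanish as well, so the chain extends to the next vertex. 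Iterating, we eventually reach either an arrowhead or a new rupture divisor $E_k$, at which point we apply \cref{thm-main-1} at $E_k$ to enforce $-1 \leq \epsilon_\bullet \leq 1$ at each of its neighbours. Run the same procedure backwards from $E_i$: if $j = 2$, then $E_i$ already meets the maximality condition as an endpoint; otherwise, we use \cref{cor2-prop-rup2} either to transfer the vanishing residue at $E_i$ to the $E_{i_2}$-direction (making $E_i$ an admissible endpoint) or to extend the chain through $E_i$ into a second neighbour.

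The main obstacle is the mutual compatibility of these local modifications. \cref{prop-rup2,eps-proposition} preserve $\sigma_v$ only for $v$ in the components of $\Delta_\pi \setminus \{E_k\}$ containing $E_{k_1}$ or $E_{k_2}$, so adjusting $\omega$ at a rupture vertex $E_k$ could in principle change $\sigma$ beyond $E_k$ in the ``forward'' direction along the chain. This is resolved by processing the rupture vertices of the chain in the root-ward direction, so that at each step the not-yet-processed portion of the chain lies in the $E_{k_1}$- or $E_{k_2}$-component of the vertex under modification and is therefore left untouched. A straightforward induction on the number of rupture vertices traversed then produces the required $\omega'$, and the resulting path is maximal by construction, since the procedure terminates exactly when no further admissible extension exists at either endpoint (modulo the allowed exception in the $E_{j_2}$ direction).
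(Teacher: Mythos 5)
Your outline captures the general skeleton — propagate vanishing residues across valency-two vertices via \cref{lemma-sumeps}, repair bounds at each new rupture vertex, and handle the compatibility of the modifications by ordering them — but the key ingredient that makes the paper's proof go through is missing, and the ordering you propose goes the wrong way.

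The paper does not invoke \cref{thm-main-1} at each rupture vertex. \cref{thm-main-1} only promises the existence of some $\omega'$ with good bounds at $E_k$ and $\sigma_k(\omega')=\sigma_k(\omega)$; it says nothing about $\sigma_v$ for $v \neq k$. If you apply it naively at the next rupture vertex $E_k$ you may destroy the edge you just established in $\mathcal{M}$. What the paper actually uses is \cref{prop-two-rupture} combined with \cref{eps-proposition}: these are the refined versions whose ``moreover'' clauses guarantee $\sigma_v(\omega'')=\sigma_v(\omega')$ for all $v$ in the connected component of $\Delta_\pi\setminus\{E_k\}$ containing $E_{k_2}$ (and $E_{k_1}$, for \cref{eps-proposition}). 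Without this preservation clause, the chain built so far is not guaranteed to survive the modification. Citing \cref{cor2-prop-rup2} for the $E_{i_2}$-direction is in the right spirit, but it does not supply the needed $\sigma$-preservation either.

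The ordering also appears to be reversed. Since the preservation clauses of \cref{prop-two-rupture,eps-proposition} protect only the \emph{root-ward} ($E_{k_2}$, resp.\ $E_{k_1}$/$E_{k_2}$) components, the vertex being modified must always lie \emph{leaf-ward} of everything that has already been set up. That forces a root-to-leaf order: the paper first walks toward $E_0$ until it finds the root-most vertex $E_j$ of the would-be chain with $\epsilon_{j_2}(\omega')\neq 0$, then extends outward, and finally argues that the resulting chain can still be made to contain $E_i$. If instead you process leaf-to-root as you suggest, the ``already-processed'' vertices sit in the $E_{k_3},\dots,E_{k_m}$-directions of the current vertex $E_k$, and those directions are precisely the ones \cref{eps-proposition} is allowed to modify (and \cref{prop-rup2} gives no guarantee there at all), so the bounds you established earlier can be destroyed. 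The ``not-yet-processed portion left untouched'' statement is not the property you need; you need the already-processed portion to be left untouched, and leaf-to-root does not deliver that.

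Finally, the proposal does not address the branching case: if, after the initial replacement, $E_j$ has two or more edges in $\mathcal{M}(\omega')$, one must build two chains emanating from $E_j$ and argue (as the paper does, using the preservation clauses) that the modifications for the two branches can be performed independently.
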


The existence of admissible chains for the standard volume form \(\omega = \dd x\) is a consequence of \cite[Thm. 3]{Vey95}. We will prove \cref{prop-admissible-chains} at the end of the section, but first, we need some intermediate results. 

\jump

Throughout this section, we will keep using the same notations and assumptions as in \cref{section-eps}. The next result is an easy, but key consequence of \cref{lemma-sumeps} and the assumptions made on \(\omega\).

\begin{lemma} \label{lemma-valency2}
    With the assumptions above, let \(E_j, j \in T_e\), be an exceptional divisor. If \(r_j = 2\) and one residue number for \(\omega\) at \(E_j\) vanishes, then so does the other. Moreover, when \(r_j = 1\) the only residue number for \(\omega\) at \(E_j\) is less than or equal to \(-1\).
\end{lemma}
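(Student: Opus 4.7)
The plan is to apply the adjunction/intersection identity of \cref{lemma-sumeps} at $E_j$, combined with an explicit identification of the ``extra'' divisor contributions $\delta_k(\omega)$ for the class of forms under consideration. Recall that from \cref{section-eps} onward we restrict to $\omega = g\,\dd x$ with $g = \prod_{r_i \neq 2} f_i^{\gamma_i}$, where $f_i$ cuts out a branch whose strict transform meets $E_i$ transversally at a general point. The key bookkeeping observation is that at a given exceptional divisor $E_j$, the only component of $\operatorname{Div}(\pi^* g)$ that is not already contained in $\operatorname{Supp}(F_\pi)$ and meets $E_j$ is the strict transform of $\{f_j = 0\}$, which appears with multiplicity $\gamma_j$ precisely when $r_j \neq 2$ (otherwise $E_j$ is not in the index set of the product and no such branch is created). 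In particular, the total $\delta$-contribution at $E_j$ from \cref{lemma-sumeps} equals $\gamma_j$ if $r_j \neq 2$ and equals $0$ if $r_j = 2$.

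For the first assertion, assume $r(E_j) = 2$. There are exactly two nonzero neighbors $E_{j_1}, E_{j_2}$ of $E_j$ in the support of $F_\pi$, and by the preceding observation there are no $\delta$-terms. Applying \cref{lemma-sumeps} at $E_j$ and using the convention $\epsilon_{j_k}(\omega) = 1$ for any zero neighbor yields
\[
\epsilon_{j_1}(\omega) + \epsilon_{j_2}(\omega) = 0.
\]
Hence $\epsilon_{j_1}(\omega) = 0$ if and only if $\epsilon_{j_2}(\omega) = 0$, which is exactly the claim.

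For the second assertion, assume $r(E_j) = 1$, so that $E_j$ is a dead-end of $\Delta_\pi$, has a single nonzero adjacent divisor in $\operatorname{Supp}(F_\pi)$, and the $\delta$-sum at $E_j$ equals $\gamma_j \in \mathbb{Z}_{\geq 0}$. Again \cref{lemma-sumeps} gives
\[
\epsilon_{j_1}(\omega) + \gamma_j = -1,
\]
whence $\epsilon_{j_1}(\omega) = -1 - \gamma_j \leq -1$, as desired.

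There is no substantial obstacle: the entire argument is one application of \cref{lemma-sumeps} per case, preceded by the bookkeeping that isolates the $\delta$-contribution at $E_j$ for the forms $\omega$ fixed in \cref{section-eps}. The only point to be careful about is handling the conventions for ``zero'' neighbors (with $\epsilon = 1$) and for the $\delta$-terms coming from the factor $f_j$, both of which are already made explicit in the setup of \cref{section-eps}.
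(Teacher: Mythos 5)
Your proof is correct and follows exactly the route the paper indicates (the paper itself gives no written proof of this lemma, only remarking that it is ``an easy, but key consequence'' of \cref{lemma-sumeps} and the assumptions on $\omega$, which is precisely the argument you supply). The only cosmetic quibble is that, in the $r(E_j)=1$ case, the unique nonzero neighbor of a dead-end $E_j$ should be labeled $E_{j_2}$ rather than $E_{j_1}$ under the paper's conventions, but this does not affect the computation.
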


After \cref{lemma-valency2}, it is enough to focus on the case of two exceptional divisors \(E_i, E_j \), not necessarily rupture, connected by a path in \(\mathcal{M}(\omega)\) with no other rupture divisors in between. Assume that \(E_i\) is closer to the blow-up of the origin \(E_0\) than \(E_j\). In this situation, the divisor adjacent to \(E_j\) on the path to \(E_i\) must necessarily be \(E_{j_2}\). 
\begin{center}
    \begin{picture}(500,105)(30,-40)
        \put(160,23){\makebox(0,0){\(\vdots\)}}
        \put(180,20){\line(-2,-1){50}}
        \put(180,20){\line(-2,1){50}}
        \put(180,20){\circle*{4}}
        \put(180,20){\line(1,0){50}}
        \put(180,20){\line(0,-1){50}}

        \put(180,-30){\circle*{4}}
        \put(180,-36){\makebox(0,0){\(\vdots\)}}

        \put(230,20){\circle*{4}}
        \put(241,20){\makebox(0,0){\(\dots\)}}

        \put(268,20){\makebox(0,0){\(\dots\)}}
        \put(280,20){\circle*{4}}

        \put(280,20){\line(1,0){50}}
        \put(330,20){\circle*{4}}
        \put(330,20){\line(-2,-1){50}}
        \put(330,20){\line(-2,1){50}}
        \put(310,23){\makebox(0,0){\(\vdots\)}}
        \put(330,20){\line(1,0){30}}
        \put(370,20){\makebox(0,0){\(\dots\)}}

        \put(329,34){\makebox(0,0){\(E_i\)}}
        \put(280,34){\makebox(0,0){\(E_{i_r}\)}}

        \put(180,32){\makebox(0,0){\(E_j\)}}
        \put(230,32){\makebox(0,0){\(E_{j_2}\)}}
        \put(197,-30){\makebox(0,0){\(E_{j_1}\)}}

        \put(193,8){\makebox(0,0){\(n_{g}\)}}
        \put(200,30){\makebox(0,0){\(\overline{m}_{g}\)}}
        \put(160,42){\makebox(0,0){\(1\)}}
        \put(160,-1){\makebox(0,0){\(1\)}}
        
        \put(310, -30){\makebox(0,0){\(E_0\)}}
    \end{picture}
    \begin{tikzpicture}[remember picture, overlay]
        \node (a) at (1.5, 0.9) {};
        \node (b) at (-0.1, 2.4) {};
        \node (d) at (2.1, 0.9) {};
        \node (c) at (3.9, 2.4) {};
        \draw[->] (a)  to [out=-180,in=-90, looseness=1] (b);
        \draw[->] (d)  to [out=0,in=-90, looseness=1] (c);
    \end{tikzpicture}
\end{center}

\begin{proposition} \label{prop-two-rupture}
    With the assumptions above, let \(E_i\) and \(E_j\) be two exceptional divisors connected by a path in \(\mathcal{M}(\omega)\). If there are no other rupture divisors on the path, then there exists \(\omega' \in \Omega^2_{\mathbb{C}^2, x}\) satisfying 
    \begin{equation} \label{eq-prop-two-rupture}
       \sigma_{j}(\omega') = \sigma_{j}(\omega), \qquad \epsilon_{j_2}(\omega') = 0, \qquad \textnormal{and} \qquad -1 < \epsilon_{j_1}(\omega') \leq 1, 
    \end{equation}
    where \( \epsilon_{j_1}(\omega') = 1\) can only occur if there are no arrowheads in the connected component of \(\Delta_\pi \setminus \{E_j\}\) containing \(E_{j_1}\). Moreover, \(\sigma_{v}(\omega') = \sigma_{v}(\omega)\), for all \(v \in T\) in the connected component of \(\Delta_\pi \setminus \{E_j\}\) containing \(E_{j_2}\).
\end{proposition}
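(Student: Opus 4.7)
The first key observation is that since $E_{j_2}$ lies on the path in $\mathcal{M}(\omega)$ from $E_j$ to $E_i$, the hypothesis forces $\sigma_{j_2}(\omega) = \sigma_j(\omega)$, and by the very definition of the residue number this immediately gives $\epsilon_{j_2}(\omega) = 0$. Therefore the substantive content of the proposition is to construct $\omega'$ that preserves $\sigma_v(\omega)$ throughout the entire $E_{j_2}$-component of $\Delta_\pi \setminus \{E_j\}$ (together with $\sigma_j(\omega') = \sigma_j(\omega)$), while placing $\epsilon_{j_1}(\omega')$ into $(-1, 1]$. Under these preservations the equality $\sigma_j(\omega') = \sigma_{j_2}(\omega')$ persists, so $\epsilon_{j_2}(\omega') = 0$ as required.

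The strategy is to mimic the semigroup reduction of Proposition \ref{prop-rup2}, restricted to trades that do not alter the right-side coefficient $\alpha_2$ of Corollary \ref{cor-valuations-3}. The fundamental right-side-preserving trade is to \emph{swap one factor of $f_j$ (with $v_j(f_j) = n_g\overline{m}_g$ and strict transform attached as an arrowhead at $E_j$) with $n_g$ copies of the maximal contact element $f_{j_g}$ (with $v_j(f_{j_g}) = \overline{m}_g$ and arrowhead on the $E_{j_1}$-side)}. Applying Corollary \ref{cor-valuations-3} to both $f_j$ and $f_{j_g}$, the non-negativity of the $\alpha$'s together with the coprimality of $n_g$ and $\overline{m}_g$ forces $\alpha_2(f_j) = \alpha_2(f_{j_g}) = 0$, while $\alpha_1(f_{j_g}) = 1$ and $\alpha_{m+1}(f_j) = 1$ are the only remaining non-vanishing coefficients. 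The trade therefore leaves both $v_j(g)$ and $\alpha_2$ fixed and shifts $(\alpha_1, \alpha_{m+1})$ by $\pm(n_g, -1)$; by Proposition \ref{prop-eps-rup} it shifts $\epsilon_{j_1}(\omega)$ by exactly $\pm 1$. Iterating this trade brings $\alpha_1$ into $[0, n_g - 1]$, placing $\epsilon_{j_1}(\omega')$ into an interval of length $1$, and combining with the strict lower bound in Corollary \ref{cor-eps-rup} (valid since $r_j \geq 3$) and its strict upper bound whenever an arrowhead sits on the $E_{j_1}$-side component yields the claimed inequalities with the stated conditional strictness of the upper bound. The edge case in which the trade must raise $\alpha_1$ but no copy of $f_j$ is available in $g$ (i.e.\ $\alpha_{m+1} = 0$) is handled exactly as in the proof of Proposition \ref{prop-rup2}: applying Lemma \ref{semigroup-prop} to $v_j(g) - (\alpha_1+1)\overline{m}_g \in \Gamma_j$ (shown to lie in $\Gamma_j$ via Corollary \ref{cor-eps-rup}) injects the required $f_j$ factor before the trade operates.

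The main obstacle is to verify that the basic trade preserves $v_v(g)$ for \emph{every} $v$ in the $E_{j_2}$-component, not merely at $E_{j_2}$. This reduces to the identity $v_v(f_j) = n_g\, v_v(f_{j_g})$ for all such $v$, which I establish inductively along the chain of divisors moving outward from $E_j$ on the right side. The base case $v = E_{j_2}$ follows by direct computation: Lemma \ref{lemma-valuations-3} gives $v_{j_2}(f_j) = N_{j_2}(C_j) = n_g(c_g n_{g-1}\overline{m}_{g-1} + d_g)$, while the coefficient computation above, combined with Corollary \ref{cor-valuations-3}, gives $v_{j_2}(f_{j_g}) = c_g n_{g-1}\overline{m}_{g-1} + d_g$, so the ratio is exactly $n_g$. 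Propagating this proportionality through each successive right-side divisor via the linear relations of Lemma \ref{lemma-linear-valuations} and Corollary \ref{cor-valuations-3}, and using the hypothesis that the intermediate divisors on the $E_{j_2}$-to-$E_i$ path have valency $2$ (so the propagation remains along a simple chain without branching), extends the identity to the whole $E_{j_2}$-component, completing the verification.
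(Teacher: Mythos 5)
Your observation that $\epsilon_{j_2}(\omega)=0$ is automatic once $\sigma_{j_2}(\omega)=\sigma_j(\omega)$ is correct, and the basic trade $f_j \leftrightarrow f_{j_g}^{n_g}$ you identify is indeed (up to a typo in the paper's exponent) the same replacement the paper uses. Your argument that this trade preserves $v_v(g)$ for all $v$ in the $E_{j_2}$-component is also essentially right: the identity $v_v(f_j)=n_g\, v_v(f_{j_g})$ follows directly from the diagram calculus since the only difference between $\ell_{v,\text{arrowhead of } f_j}$ and $\ell_{v,\text{arrowhead of } f_{j_g}}$ is the decoration $n_g$ at $E_j$ pointing toward $E_{j_1}$, which is adjacent-but-not-on the first path and lies on the second.

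However, the remainder of the argument has a genuine gap, and it is precisely where the paper invokes the hypothesis you never use. You assume that iterating the reducing trade $f_{j_g}^{n_g}\to f_j$ can always be carried out until $\alpha_1\in[0,n_g-1]$, but this requires at least $n_g$ copies of $f_{j_g}$ to be present whenever $\epsilon_{j_1}(\omega)>1$, and you do not justify this. The paper's proof hinges on the fact that the \emph{other} endpoint $E_i$ of the chain satisfies $\epsilon_{i_r}(\omega)=0$ (because $E_{i_r}$ is on the path in $\mathcal{M}(\omega)$), which yields $\alpha_r^{(i)}+1 = -\beta_r^{(i)}\sigma_i(\omega)$; combined with $\sigma_i(\omega)=\sigma_j(\omega)$ and the decoration relation $\ell_{ik}=n_{g+1}n_g\overline{m}_g\ell_{j_sk}$, this forces $\alpha_r^{(i)}=\gamma_k$ to be large enough for the trade exactly when $\epsilon_{j_1}(\omega)>1$. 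Your proposal never touches $E_i$ at all. Moreover, you misidentify the edge case as ``raising $\alpha_1$ with no $f_j$ factor available'' and propose to handle it with the semigroup decomposition from Proposition \ref{prop-rup2} applied to $v_j(g)-(\alpha_1+1)\overline{m}_g$. That construction freely reassigns the exponents $\gamma_0,\dots,\gamma_g$ in a product of maximal contact elements, and in general changes the $E_{j_2}$-side valuations — the very invariance you correctly flag as the main obstacle for the basic trade. Finally, you do not establish $\epsilon_{j_1}(\omega')>-1$; the paper gets this for free by first zeroing out $\alpha^{(j)}_3,\dots,\alpha^{(j)}_{m_j+1}$ (replacing the arrowheads at $E_j$ and on the $E_{j_3},\dots,E_{j_{m_j}}$-side by $f_k$-powers, which preserves the $E_{j_2}$-side valuations), after which Lemma \ref{lemma-sumeps} combined with $\epsilon_{j_2}(\omega)=0$ forces $\epsilon_{j_1}(\omega)=-\sum_{k\geq 3}\beta^{(j)}_k\sigma_j(\omega)>0$, and the only remaining concern is the upper bound. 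You would need to add this preliminary reduction to make the lower bound work.
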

\begin{proof}
    Assume that \(E_{i_r}\) is the divisor adjacent to \(E_i\) on the path between \(E_i\) and \(E_j\), that is \(\epsilon_{i_r}(\omega) = 0\). Assuming that \(E_{j_1} \neq 0\), in particular \(E_{j}\) is rupture, we will define a new differential form \(\omega'\) satisfying the conditions \eqref{eq-prop-two-rupture}, otherwise there is nothing to prove. In this setting, we can always find a divisor \(E_k, k \in T,\) in the direction of \(E_{j_1}\) with \(r_k = 1\) and such that any branch \(f_k\) has \(v_j(f_k) = \overline{m}_g\). 
    
    \jump

    We can start assuming that the \(\alpha^{(j)}_3, \dots, \alpha^{(j)}_{m_j+1} \in \mathbb{Z}_{\geq 0}\) from \cref{cor-valuations-3}, applied to \(E_j\) and \(\omega\), are zero. Indeed, any \(f_v \in \mathscr{O}_{\mathbb{C}^2, x}, v \in T,\) contributing to these numbers, that is, with arrowheads in \(E_j\) or in the direction of \(E_{j_3}, \dots, E_{j_r}\), are such that \(v_j(f_v) = n_{g} \overline{m}_{g} \alpha_v\), for some \(\alpha_v \in \mathbb{Z}_{> 0}\). Hence, replacing any such \(f_v\) appearing as component of \(\omega\) by \(f_k^{n_{g+1}\alpha_v}\) we have that \(\sigma_j(\omega)\) remains constant. In addition, after \eqref{eq-calculus-valuations}, \(\sigma_v(\omega)\) also remains constant for all \(v \in T\) in the direction of \(E_{j_2}\). Moreover, we have that \(\alpha_r^{(i)} = \gamma_{k}\), where \(\gamma_k\) denoted the multiplicity of \(f_k\) as factor of \(\omega\).
    
    \jump
    
    Under these assumptions, \cref{prop-eps-rup,lemma-sumeps} imply that
    \begin{equation} \label{eq-prop-two-rupture2}
        \alpha^{(i)}_r + 1 = -\beta_r^{(i)} \sigma_{i}(\omega), \qquad \textnormal{and} \qquad \epsilon_{j_1}(\omega) = -\sum_{k=3}^{m_j} \beta_{k}^{(j)} \sigma_j(\omega) > 0,
    \end{equation}
    where the superscript on \(\beta_r^{(i)}, \beta_k^{(j)} \in \mathbb{Z}_{> 0}\) denotes the corresponding exceptional divisor in \cref{prop-eps-rup}. Indeed, in the case where \(E_{i_r} = E_{i_2}\), the exceptional divisor \(E_0\) must appear in the path between \(E_{j} \) and \(E_{i}\), and since there are no other rupture divisors in this path, the valuations associated with \(E_{j}\) and \(E_{i}\) have only one Puiseux pair, i.e. \(g = 1\).
    
    \jump
    
    Hence, after \eqref{eq-prop-two-rupture2}, it is enough to assume that \(\epsilon_{j_1}(\omega)  > 1\). By definition of \(\beta_r^{(i)}\), see \cref{lemma-valuations-1,lemma-linear-valuations},
    \[
        \beta_r^{(i)} = \frac{1}{\Pi_{i_r}} \sum_{k \in T_s^{(i_r)}} \ell_{ik} N_k,
    \]
    where \(\Pi_{i_r}\) is the product of decorations adjacent to \(E_i\) not in the direction of \(E_{i_r}\), and \(T_s^{(i_r)}\) is the subset of \(T_s\) corresponding to the subgraph rooted at \(E_{i_r}\) after deleting the edge joining \(E_{i_r}\) with \(E_{i}\).  Similarly, denote by \('T_{s}^{(j)}\) the subset of \(T_s\) corresponding to the subgraph rooted at \(E_{j}\) after deleting the edges joining \(E_j\) with \(E_{j_1}\) and \(E_{j_2}\). Define,
    \[
        '\beta_r^{(i)} = \frac{1}{\Pi_{i_r}} \sum_{k \in 'T_s^{(j)}} \ell_{ik} N_k.
    \]
    Then, 
    \begin{equation} \label{eq3-prop-two-rupture}
        \alpha^{(i)}_r + 1 = -\beta_r^{(i)} \sigma_i(\omega) \geq - {'\beta_r^{(i)}} \sigma_i(\omega) = -n_{g} \sum_{s=3}^{m_j} \beta^{(j)}_s \sigma_{j}(\omega) > n_{g},
    \end{equation}
    since \(\sigma_i(\omega) = \sigma_j(\omega)\) and \(\ell_{ik} = n_{g+1} n_g \overline{m}_g \ell_{j_s k}\), for all \(k \in {'T_s^{(j)}}\) and \( s = 3, \dots, m_j\). Therefore, \(\alpha^{(i)}_r \geq n_{g+1}\). We can now define the differential form \(\omega'\) by replacing the factor \(f_{k}^{\gamma_{k}}\) in \(\omega\) by the two factors
    \[
        f_{k}^{\gamma_{k} - n_{g+1}} \qquad \textnormal{and} \qquad f_j,
    \]
    since we had \(\gamma_k = \alpha_{r}^{(i)}\). The same argument above implies that \(\sigma_j(\omega') = \sigma_j(\omega)\) and \(\sigma_v(\omega') = \sigma_v(\omega)\), for all \(v \in T\) in the direction of \(E_{j_2}\). In particular, \(\epsilon_{j_2}(\omega') = 0\).  Finally, \cref{lemma-sumeps} for the divisor \(E_j\) implies that \(\epsilon_{j_1}(\omega') = \epsilon_{j_1}(\omega) - 1\), since \({\alpha'}^{(j)}_{m_j+1} = 1\). Repeating the same process if necessary, we have that \(\omega'\) satisfies
    \[
        \epsilon_{j_2}(\omega') = 0 \quad \textnormal{and} \quad -1 < \epsilon_{j_1}(\omega') \leq 1,
    \]
    as required. We end the proof by noticing that since there are no rupture divisors in the path joining \(E_j\) and \(E_i\), the first inequality in \eqref{eq3-prop-two-rupture} is strict if and only if there is an arrowhead in the connected component of \(\Delta_\pi \setminus \{E_j\}\) containing \(E_{j_1}\). In this case, the argument above still works even if \(\epsilon_{i_1}(\omega) = 1\).
\end{proof}

\begin{proof}[Proof of \cref{prop-admissible-chains}]
Using \cref{thm-main-1}, we can start by replacing \(\omega\) for an \(\omega'\) satisfying the bounds \eqref{eq-bound-admissible} at \(E_i\) and such that \(\sigma_i(\omega') = \sigma_i(\omega)\). Since \(E_i\) is rupture, replacing \(\omega\) for \(\omega'\) does not change the vanishing residue numbers at \(E_i\). Indeed, for the adjacent divisors \(E_{i_3}, \dots, E_{i_{m_i}}\) this follows from \cref{cor-eps-proposition,lemma-rup3-2}. For the other adjacent divisors \(E_{i_1}, E_{i_2}\) the claim is a consequence of \cref{cor2-prop-rup2}. In particular, two differential forms satisfying the bounds \eqref{eq-bound-admissible} for a divisor \(E_i\) have the same vanishing residue numbers at \(E_i\).

\jump

Next, we can always find an exceptional divisor \(E_j, j \in T_e\), not necessarily rupture, and a differential form \(\omega'\) such that \(\epsilon_{j_2}(\omega') \neq 0\) and satisfying \eqref{eq-bound-admissible} at \(E_j\). Indeed, if \(E_i\) is such that \(\epsilon_{i_2}(\omega') = 0\), we can connect \(E_i\) to another divisor \(E_j\) that is either rupture or equal to \(E_0\) by a path of divisors of valency 2 on \(\mathcal{M}(\omega')\), and such that \(\epsilon_{j_2}(\omega') \neq 0\). By replacing \(E_j\) by \(E_i\) and \(\omega'\) by \(\omega\), we can repeat the whole process above and after a finite number of steps \(\epsilon_{j_2}(\omega') \neq 0 \) and \eqref{eq-bound-admissible} are satisfied at \(E_j\).

\jump

Therefore, assume that \(E_j, j \in T_e,\) is an exceptional divisor on \(\mathcal{M}(\omega')\), not necessarily rupture, such that \(\epsilon_{j_2}(\omega') \neq 0 \) and \(\omega'\) satisfies the bounds \eqref{eq-bound-admissible} at \(E_j\). Now, we can start constructing the maximal admissible path from \(E_j\) since there must exist at least one \(r \in \{3, \dots, m_j\}\) with \(\epsilon_{j_r}(\omega') = 0\). Connect \(E_j\) to some divisor \(E_k, k \in T,\) in the direction \(E_{j_r}\) by a path of divisors of valency 2 on \(\mathcal{M}(\omega')\). If \(E_k\) is rupture, we can use \cref{prop-two-rupture,eps-proposition} to get an \(\omega''\) satisfy \eqref{eq-bound-admissible} for both \(E_k\) and \(E_j\). Replacing \(E_k\) by \(E_j\) and \(\omega'\) by \(\omega''\) we can keep enlarging this path until it is maximal. 

\jump

If the initial divisor \(E_j\) has more than one adjacent edge in \(\mathcal{M}(\omega')\), then two admissible chains must be constructed starting from \(E_j\) to obtain a maximal chain. The conclusions of \cref{prop-two-rupture,eps-proposition} ensure that the differential forms for both branches of the maximal admissible chain starting at \(E_j\) path can be constructed independently.

\jump

We end this proof by noticing that the maximal admissible path we constructed can always be chosen to contain the rupture divisor \(E_i\). If at the start \(E_i\) was changed for another divisor \(E_j\) closer to \(E_0\), this was done following a path of exceptional divisors with several differential forms satisfying the bounds \eqref{eq-bound-admissible} independently. Remembering the initial path from \(E_i\) to \(E_j\), we can trace back the same path from \(E_j\) to \(E_i\) using now a single differential form satisfying \eqref{eq-bound-admissible} for all vertices in that path.

\end{proof}

\section{Periods of integrals in the Milnor fiber} \label{expansions}

For this section, we work in arbitrary dimension \( n +1 \). Let \( f : (\mathbb{C}^{n+1}, x) \longrightarrow (\mathbb{C}, 0) \) be a germ of a holomorphic function. For \( 0 < \delta \ll \epsilon \ll 1 \), let \( \overline{B}_{\varepsilon}(x) \subset \mathbb{C}^n \) be the closed ball of radius \( \varepsilon > 0 \) centered at \( x \). Denote by \( D \subset\mathbb{C} \) the disk of radius \( \delta > 0 \) centered at \( 0 \) and \( D' \) the punctured disk. Set
\[ X = \overline{B}_{\varepsilon}(x) \cap f^{-1}(D), \qquad X' = X \setminus f^{-1}(0), \qquad X_t = \overline{B}_{\varepsilon}(x) \cap f^{-1}(t), \quad t \in D. \]
The restriction \( f': X' \rightarrow D' \) is a locally trivial differentiable fibration such that the diffeomorphism type of the fiber \( X_t, t \in D', \) is independent of the choice of \( \delta, \epsilon \) \cite{Mil68}. The general fiber \( X_t, t \in D', \) is called the Milnor fiber of \( f \). Denote by \( \partial X_t \) the Milnor fiber boundary. The homology groups \(H_i(X_t, \mathbb{C})\) are finite-dimensional vectors spaces vanishing above degree \(n\) since \(X_t\) has the homotopy type of a finite CW-complex of dimension \(n\), \cite[Thm. 5.1]{Mil68}.

\jump

A vanishing cycle is an element \( \gamma(t) \) of \( H_{n}(X_t, \mathbb{C}) \) such that \( \gamma(t) \) collapses to \( x \in X_0 \) when \( t \) tends to zero. Similarly, a relative vanishing cycle \( \gamma(t) \) is defined to be a locally finite cycle in \( H_{n}^{lf}(X_t \setminus \partial X_t, \mathbb{C}) \) such that the limiting cycle \( \gamma(0) \subset X_0 \) goes through \( x \in X_0 \). Recall that locally finite homology, also known as Borel-Moore homology, is defined as homology with closed supports. In contrast, the usual singular homology agrees with homology with compact supports.

\jump

The fundamental group \( \pi_1(D', 0) \) of the base induces an endomorphism \( h : X_t \longrightarrow X_t \) known as the geometric monodromy. By functoriality, this map induces a morphism on the homology groups \( h_*: H_*(X_t, \mathbb{C}) \longrightarrow H_*(X_t, \mathbb{C}) \), the algebraic monodromy. In the case of locally finite homology, the isomorphism with relative homology \( H_*^{lf}(X_t \setminus \partial X_t, \mathbb{C}) \cong H_*((X_t, \partial X_t), \mathbb{C}) \) and the fact that the geometric monodromy can be constructed to preserve \( \partial X_t \), gives an induced monodromy endomorphism on \( H^{lf}_*(X_t \setminus \partial X_t, \mathbb{C}) \) that we keep denoting by \( h_* \).

\jump

For top dimensional cycles on \( X_t \), there is an inclusion \( H_n(X_t, \mathbb{C}) \hookrightarrow H_n((X_t, \partial X_t), \mathbb{C}) \), so every vanishing cycle is in particular a relative vanishing cycle. Let \( \eta \in \Gamma(X, \Omega^{n}_{X}) \) be a holomorphic \( n \)-form, then
\[ I(t) = \int_{\gamma(t)} \eta, \qquad t \in D', \]
is well-defined since the restriction of \( \eta \) to \( X_t \) is a form of maximal degree. For relative cycles not coming from cycles in \( H_n(X_t, \mathbb{C}) \), the integrals \( I(t) \) must be understood as path integrals starting and ending in the boundary \( \partial X_t \).

\jump

When \( \gamma(t) \) is a vanishing cycle the integrals \( I(t) \) are well-studied in the literature; they are multivalued functions on \( D' \) satisfying asymptotic expansions that are related to the local Bernstein-Sato polynomial \( b_{f, x}(s) \), see for instance \cite{Mal74a,Mal74b,Var80,Var82a}. For relative vanishing cycles similar results hold as we will discuss next.

\jump

Leray's residue theorem states that,
\[ \int_{\gamma(t)} \eta = \frac{1}{2 \pi \imath} \int_{\delta \gamma(t)}{\frac{\dd f \wedge \eta}{f-t}}, \]
where \( \delta : H_{n}(X_t, \mathbb{C}) \longrightarrow H_{n+1}(X \setminus X_t, \mathbb{C}) \) is the Leray coboundary operator, see \cite[III.2.4]{Pham11} for a proof. Since Leray's coboundary operator can be constructed relative to any family of supports, see \cite[II.2.2]{Pham11}, the proof of the residue theorem in \cite[III.2.4]{Pham11} generalizes to locally finite homology. We can deduce that \( I(t) \) is holomorphic in sectors since the integrand depends holomorphically on \( t \) and the derivative is given by
\begin{equation} \label{derivative-integral}
    I'(t) = \frac{d}{d t} \int_{\gamma(t)} \eta = \int_{\gamma(t)} \frac{\dd \eta}{\dd f},
\end{equation}
where \( \dd \eta / \dd f \) is the Gelfand-Leray form. From an elementary point of view \(\dd \eta / \dd f\) is a family of forms on \(X_t, t \in D',\) that are defined as follows: on the set \(\{\frac{\partial f}{\partial x_i} \neq 0\}\), \(\dd \eta / \dd f\) is the restriction to \(X_t\) of the form
\[
  (-1)^{i-1} \frac{\partial f}{\partial x_i}^{-1} g \dd x_0 \wedge \cdots \wedge \widehat{\dd x_i} \wedge \cdots \wedge \dd x_{n},  
\]
where \(\eta = g \dd x\). The identity \eqref{derivative-integral} follows from Leray's residue theorem and differentiation under the integral sign,
\begin{equation}
    \begin{split}
        \frac{d}{dt} \int_{\gamma(t)} \eta & = \frac{d}{dt} \left( \frac{1}{2 \pi \imath} \int_{\delta \gamma(t)} \frac{\dd f \wedge \eta}{f - t} \right) = \frac{1}{2 \pi \imath} \int_{\delta \gamma(t)} \frac{\dd f \wedge \eta}{(f - t)^2} \\
        & = \frac{1}{2 \pi \imath} \int_{\delta \gamma(t)} \left\{ \frac{\dd \eta}{f - t} - \dd \left( \frac{\eta}{f - t} \right) \right\} = \frac{1}{2 \pi \imath} \int_{\delta \gamma(t)} \frac{\dd \eta}{f - t} = \int_{\gamma(t)} \frac{\dd \eta}{\dd f},
    \end{split}
\end{equation}
since if \( t \) moves sufficiently close to a neighborhood of a point the representative of the cycle \( \delta \gamma(t) \) can be chosen independent of \( t \), see the proof of \cite[Prop. X.1.1]{Pham11}.

\jump

Assume now that \( (h_* - \lambda)^p \gamma(t) = 0 \). The presence of monodromy implies that the integrals \( I(t) \) have the following expansion
\[ I(t) = \int_{\gamma(t)} \eta = \sum_{\alpha, k} a_{\alpha, k}(\eta)\, t^{\alpha} \frac{(\ln{t})^k}{k!}, \qquad t \in D', \]
where \( \alpha \in \mathbb{C} \) is such that \( e^{-2 \pi \imath \alpha} = \lambda \), and \( 0 \leq k < q \). If \( \gamma(t) \) is a vanishing cycle it follows from the Monodromy Theorem that \( \alpha \in \mathbb{Q} \) and \( q \leq n \). Moreover, from the regularity of the Gauss-Manin connection and a result of Malgrange \cite[Prop. 3.1]{Mal74b} one has that \( \alpha > 0 \) for any holomorphic form \( \eta \in \Gamma(X, \Omega^{n}_X) \). For relative vanishing cycles the same results still hold and can be deduced from the arguments in \cref{prop-bfct} below and the fact that \( b_{f, x}(s) \) has roots in \( \mathbb{Q}_{<0} \) with multiplicity at most \( n + 1 \), \cite{Kas76,Sai94}.

\jump

Since top differential forms \( \omega \in \Gamma(X, \Omega^{n+1}_{X}) \) are exact, in the sequel we will work with period integrals of the form
\begin{equation} \label{eq-topforms}
    \int_{\gamma(t)} \frac{\omega}{\dd f} = \sum_{\alpha, k} a_{\alpha, k}(\omega)\, t^{\alpha-1} \frac{(\ln{t})^k}{k!}, \qquad t \in D'.
\end{equation}

\jump

The next proposition is based on the integration by parts argument from \cite{Mal74b}.

\begin{proposition} \label{prop-bfct}
    Let \( \gamma(t) \in H^{lf}_n(X_t \setminus \partial X_t, \mathbb{C}) \) and let \( \alpha \in \mathbb{C} \) be one of the exponents appearing in \eqref{eq-topforms}. Define,
    \[
        \alpha_0 = \min \{\alpha' \in \alpha + \mathbb{Z} \ |\ \exists\, \omega \in \Gamma(X, \Omega^{n+1}_X) \ \textnormal{with}\ a_{\alpha', k}(\omega) \neq 0 \ \textnormal{for some}\ k \in \mathbb{Z}_{\geq 0} \}.
    \]
    For any \( \omega_0 \in \Gamma(X, \Omega^{n+1}_X) \) attaining the minimum, let \( k_0 = \max \{k \in \mathbb{Z}_{\geq 0} \ |\  a_{\alpha_0, k}(\omega_0) \neq 0\} \). Then, \( -\alpha_0 \) is a root of \( b_{f, x}(s) \) with multiplicity at least \( k_0 + 1 \).
\end{proposition}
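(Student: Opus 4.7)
The strategy is Malgrange's integration-by-parts argument, adapted to relative cycles. The starting point is to convert the asymptotic expansion \eqref{eq-topforms} into analytic data via a Mellin transform: for $\Re s \gg 0$ set
\[
    Z_\omega(s) := \int_0^\delta t^s \left( \int_{\gamma(t)} \frac{\omega}{\dd f} \right) \dd t = \int_{\tilde\gamma} f^s \omega,
\]
where $\tilde\gamma$ is the $(n+1)$-chain obtained by sweeping $\gamma(t)$ along $t \in (0,\delta)$. Substituting the expansion and computing the elementary integrals $\int_0^\delta t^{s+\alpha-1}(\ln t)^k/k!\, \dd t$ term by term shows that $Z_\omega(s)$ extends meromorphically to $\mathbb{C}$ with a pole at each $s = -\alpha$ of order exactly $\max\{k+1 : a_{\alpha, k}(\omega) \neq 0\}$. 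In particular, $Z_{\omega_0}(s)$ has a pole of order exactly $k_0 + 1$ at $s = -\alpha_0$.

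The core of the argument is the functional equation derived from the Bernstein--Sato identity $P(s) f^{s+1} = b_{f,x}(s)\, f^s$. Multiplying by $\omega_0$, integrating on $\tilde\gamma$, and transferring $P(s)$ onto $\omega_0$ via its formal adjoint $P^*(s)$, one obtains, valid first for $\Re s \gg 0$ and then by meromorphic continuation,
\[
    b_{f,x}(s)\, Z_{\omega_0}(s) = Z_{P^*(s)\omega_0}(s+1) + R(s),
\]
where $P^*(s)\omega_0$ is again a holomorphic top form with coefficients polynomial in $s$, and $R(s)$ collects the boundary contributions from $\gamma(\delta)$ and from $\partial\tilde\gamma \cap \partial X$. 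These pieces lie in regions where $f$ stays bounded away from zero, so $R(s)$ is holomorphic in a neighbourhood of $s = -\alpha_0$.

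To conclude, write $P^*(s) = \sum_j s^j Q_j$ with each $Q_j$ a fixed differential operator on top forms; then $Z_{P^*(s)\omega_0}(s+1) = \sum_j s^j\, Z_{Q_j \omega_0}(s+1)$. Each $Q_j \omega_0$ is a holomorphic top form, so the defining minimality of $\alpha_0$ forces every exponent $\alpha'$ with $a_{\alpha',k}(Q_j\omega_0) \neq 0$ in the congruence class $\alpha_0 + \mathbb{Z}$ to satisfy $\alpha' \geq \alpha_0$. Hence each $Z_{Q_j\omega_0}(s+1)$ is holomorphic at $s = -\alpha_0$, and so is the entire right-hand side. Since the left-hand side carries a pole of order $k_0 + 1$ at $s = -\alpha_0$, the equality forces $b_{f,x}(s)$ to vanish there to order at least $k_0 + 1$, which is the claim.

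The main technical obstacle lies in establishing the functional equation rigorously for a relative cycle: because $\gamma(t) \in H_n^{lf}(X_t \setminus \partial X_t, \mathbb{C})$, the swept chain $\tilde\gamma$ has a non-trivial side boundary on $\partial X$, and one must verify that the integration-by-parts boundary terms do not absorb any part of the $t \to 0$ singular behaviour, i.e.\ that they contribute only holomorphic data near $s = -\alpha_0$. This is the step where the regular-singular nature of the Gauss--Manin connection and the transversality of $\partial B_\varepsilon(x)$ to $f^{-1}(0)$ enter, following Malgrange's original approach in the absolute case \cite{Mal74b}.
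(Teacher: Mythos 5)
Your approach is the same Malgrange integration-by-parts scheme that the paper uses, and the overall architecture (Mellin transform of the period integral, Bernstein functional equation, pole shift via $t^{s+1}$, minimality of $\alpha_0$) is sound. However, there is a genuine gap in your treatment of $R(s)$. You claim that $R(s)$, which collects the boundary contributions from $\gamma(\delta)$ and from $\partial\tilde\gamma \cap \partial X$, is holomorphic near $s = -\alpha_0$ because ``these pieces lie in regions where $f$ stays bounded away from zero.'' This is true for the fixed top slice $\gamma(\delta)$, but it is false for the lateral boundary: on $\partial X_t$ one has $f = t$, so the swept side wall $\bigcup_{t\in(0,\delta]}\partial\Gamma(t)$ accumulates on $\{f=0\}$, and the corresponding term $\int_0^\delta t^{s+1}\,\dd t\int_{\partial\Gamma(t)}\frac{\eta_0}{\dd f}$ is \emph{not} a priori holomorphic at $s = -\alpha_0$. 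You flag this yourself at the end as the main obstacle, but the appeal to regular singularity of the Gauss--Manin connection and transversality is a gesture, not an argument.

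The paper closes exactly this gap with a second application of Stokes' theorem, carried out \emph{inside each fiber} $\Gamma(t)$: since $\dd\bigl(\eta_0/\dd f\bigr) = \dd\eta_0/\dd f$ on $X_t$, the lateral term becomes $\int_0^1 t^{s+1}\,\dd t\int_{\Gamma(t)}\frac{\dd\eta_0}{\dd f}$. This is again a fiber Mellin integral of a holomorphic $(n+1)$-form (namely $\dd\eta_0$, with polynomial dependence on $s$), and it carries the same $t^{s+1}$ weight shift as the main term $Z_{P^*(s)\omega_0}(s+1)$. The minimality of $\alpha_0$ then forces all its poles in the congruence class $-\alpha_0 + \mathbb{Z}$ to lie at $s \leq -\alpha_0 - 1$, just as for the main term, so the right-hand side is holomorphic at $s = -\alpha_0$ and the pole comparison goes through. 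This intra-fiber Stokes step is precisely what makes Malgrange's argument valid for relative cycles, and it is the piece your write-up lacks.
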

\begin{proof}
    After a rescaling, we can always assume that \( 1 \in D \). Take a trivialization of \( f : X \cap f^{-1}((0, 1]) \longrightarrow (0, 1] \) so if \( \gamma(t) \in H^{lf}_n(X_t \setminus \partial X_t, \mathbb{C}) \) is represented by a differentiable chain \( \Gamma(t) \) in \( X_t \), locally, we can assume \( \Gamma(t) = \{t\} \times \Gamma_0 \), for some \( n \)-dimensional differentiable chain \( \Gamma_0 \). Define \( \Delta(t) = [t, 1] \times \Gamma_0 \). Then, for \( t_0 \in (0, 1] \),
    \[ \int^{1}_{t_0} t^s dt \int_{\Gamma(t)} \frac{\omega_0}{\dd f} = \int_{\Delta(t_0)} f^s \omega_0. \]
    Using the defining functional equation of \( b_{f, x}(s) \),
    \[ b_{f, x}(s) \int^{1}_{t_0} t^s dt \int_{\Gamma(t)} \frac{\omega_0}{\dd f} = \int_{\Delta(t_0)} (P(s) f^{s+1}) \omega_0. \]
    We have the identity \( (P(s) f^{s+1}) \omega_0 = f^{s+1} (P^*(s) \omega_0) + \dd (f^{s+1} \eta_0) \),where \( P^*(s) \) is the adjoint operator and \( \eta_0 \in \Omega^n_X[s] \). By Stoke's theorem,
    \[ \int_{\Delta(t_0)} (P(s) f^{s + 1}) \omega_0 = \int_{\Delta(t_0)} f^{s+1} P^*(s) \omega_0 + \int_{\partial \Delta(t_0)} f^s \eta_0. \]
    The boundary of \( \Delta(t_0) \) is \( -\Gamma(t_0), \Gamma(1), -[t_0, 1] \times \partial \Gamma_0 \), the last piece being zero if \( \gamma(t) \in H_n(X_t, \mathbb{C}) \).  Finally,
    \[ b_{f, x}(s) \int^{1}_{t_0} t^s \dd t \int_{\Gamma(t)} \frac{\omega_0}{\dd f} = \int^{1}_{t_0} t^{s+1} \int_{\Gamma(t)} \frac{P^*(s) \omega_0}{\dd f} + \int_{\Gamma(1)} \eta_0 - t_0^{s+1} \int_{\Gamma(t_0)} \eta_0 - \int^{1}_{t_0} t^{s+1} dt \int_{\partial \Gamma(t)} \frac{\eta_0}{\dd f}. \]
    By assuming that \( \textnormal{Re}(s) \gg 0 \) we can take the limit \( t_0 \rightarrow 0 \). Using Stoke's theorem and the fact that \( \dd(\eta_0 / \dd f) = \dd \eta_0 / \dd f \),
    \begin{equation} \label{eq-prop-bfct}
        b_{f, x}(s) \int^{1}_{0} t^s \dd t \int_{\Gamma(t)} \frac{\omega_0}{\dd f} = \int^{1}_{0} t^{s+1} \int_{\Gamma(t)} \frac{P^*(s) \omega_0}{\dd f} + \int_{\Gamma(1)} \eta_0 - \int^{1}_{0} t^{s+1} dt \int_{\Gamma(t)} \frac{\dd \eta_0}{\dd f}.
    \end{equation}
    The asymptotic expansions of the integrals in \eqref{eq-prop-bfct} and the identity
    \[ \int_{0}^1 t^{s + \alpha - 1} (\ln{t})^q dt = \frac{d^q}{ds^q} \frac{1}{(s + \alpha)}, \]
    gives that the left-hand side, divided by \( b_{f, x}(s) \), has a pole of order \( k_0 + 1 \) at \( -\alpha_0 \). The poles from the right-hand side are either not congruent with \( \alpha_0 \) modulo \( \mathbb{Z} \) or if they are, they are smaller or equal to \( -\alpha_0 - 1 \). We conclude that \( - \alpha_0 \) is a root of \( b_{f, x}(s) \) with multiplicity at least \( k_0 + 1 \).
\end{proof}

\begin{remark}
    If \( \gamma(t) \) is a vanishing cycle it follows from the arguments in \cite{Mal74b} that we can replace \( b_{f, x}(s) \) by the reduced local Bernstein-Sato polynomial \( \widetilde{b}_{f, x}(s) \) in the statement of \cref{prop-bfct}.
\end{remark}

\section{Proof of Theorem B}

To show that a given rational number is a root of the local Bernstein-Sato polynomial we will construct the asymptotic expansion from \cref{expansions} using resolution of singularities. In this section, we will continue to work in arbitrary dimension \( n+1 \) unless stated otherwise. We extend the terminology about log resolutions from \cref{intro} to arbitrary dimension.

\subsection{Semi-stable reduction}

Let \( e = \lcm \{N_i \ |\ i \in T\} \) and consider the map \( \widetilde{D} \rightarrow D \) given by \( \tilde{t} \mapsto \tilde{t}^e = t \). Set \( \overline{X} = \pi^{-1}(X) \) and consider \( \widetilde{X} \) the normalization of the fiber product \( \overline{X} \times_D \widetilde{D} \). The space \( \widetilde{X} \) has only orbifold singularities concentrated in codimension at least 2, see \cite{Steen76}. The projection \( \tilde{f} : \widetilde{X} \longrightarrow \widetilde{D} \) has a reduced special fiber. The projection \( \rho : \widetilde{X} \longrightarrow \overline{X} \) is a cyclic cover of degree \(e\) ramified along \( F_\pi \). For \( p \in F_\pi \), \( | \rho^{-1}(p)| = \gcd (N_i \ |\ p \in E_i) \). Define \( \widetilde{F}_\pi = \rho^*(F_\pi) \) and \( \widetilde{E}_i = \rho^*(E_i) \) which are reduced divisors with orbifold normal crossings.

\begin{equation*}
    \begin{tikzcd}
        \widetilde{X} \arrow{d}{\tilde{f}} \arrow{r}{\rho} & \overbar{X} \arrow{d}{\pi^* f} \arrow{r}{\pi} & X \arrow{d}{f} \\
        \widetilde{T} \arrow{r} & T \arrow[r, equal] & T.
    \end{tikzcd}
\end{equation*}

\subsection{Asymptotic expansions terms} \label{subsection-mult1}

Let us fix an exceptional divisor \( E_i, i \in T_e, \) and define \( E_i^\circ = E_i \setminus \cup_{j \neq i} (E_i \cap E_j) \), and similarly for \(\widetilde{E}_i\) in \(\widetilde{X}\). Let \( \omega = g \dd x \) be a top differential form on \( X \) and set \( \widetilde{\omega} = (\pi \rho)^*\omega \). Consider \( \widetilde{X}_i \subset \widetilde{X} \) an open tubular neighborhood of the divisor \( \widetilde{E}_i \) and define \( \widetilde{X}_i^\circ = \widetilde{X}_i \cap (\widetilde{X} \setminus \widetilde{F}_\pi) \). Locally on one of the connected components of \(\widetilde{X}_i^\circ\), assume that the exceptional divisor \( E_i \) has equation \( y_0 \). Then, we can decompose \( \widetilde{\omega} \) as
\begin{equation} \label{decomposition1-eq}
    \widetilde{\omega} = \widetilde{\omega}_0 + \widetilde{\omega}_1 + \cdots + \widetilde{\omega}_{\nu} + \cdots, \qquad \nu \in \mathbb{Z}_{\geq 0},
\end{equation}
by considering a series expansion with respect to \(y_0\), and where \( \widetilde{\omega}_\nu \) is a local section of \( \Omega_{\widetilde{X}}^{n+1}(-\nu \widetilde{E}_i) \). A computation in local coordinates shows that the multiplicity of \(\widetilde{\omega}_\nu\) along \(\widetilde{E}_i\) equals
\[ v_i(\widetilde{\omega}_\nu) = e\frac{k_i + v_i(g) + \nu}{N_i} - 1. \]
Hence, \( y_0^{-v_i(\widetilde{\omega}_\nu)} \widetilde{\omega}_\nu \) is defined on \( \widetilde{X}^\circ_i \) and extends holomorphically to \( \widetilde{E}_i \). For our purposes, we will only be interested in the first term \( \widetilde{\omega}_0 \), which is always non-zero. Locally on \( \widetilde{X}^\circ_i \) we can write \( \tilde{f}(y_0, \dots, y_n) = y_0 = \tilde{t} \), hence \( \dd \widetilde{f} = \dd y_0 \), and
\begin{equation} \label{pullback-eq}
    \frac{\widetilde{\omega}}{\dd \tilde{f}} = \sum_{\nu \geq 0} \tilde{t}^{\,v_i(\widetilde{\omega}_\nu)} R_i(\widetilde{\omega}_\nu).
\end{equation}
Here, \( R_i(\widetilde{\omega}_\nu) = \tilde{f}^{-v_i(\widetilde{\omega}_\nu)} \widetilde{\omega}_\nu / \dd \tilde{f} \) is a holomorphic form on \( \widetilde{X}_i^\circ \) extending holomorphically to \( \widetilde{E}_i^\circ \) and that is locally constant with respect to \( \tilde{t} = y_0 \). Using the relation,
\begin{equation} \label{easy-eq}
    \restr{\frac{\rho^* \overline{\omega}}{\dd \tilde{f}}}{\widetilde{X}_{\tilde{t}}} = e \tilde{f}^{e-1} \rho^* \left(\restr{\frac{\overline{\omega}}{\dd \overbar{f}}}{\overline{X}_{\tilde{t}^e}}\right),
\end{equation}
we can push down to \( \overline{X} \) and obtain
\begin{equation}
    \frac{\overline{\omega}}{\dd \bar{f}} = \sum_{\nu \geq 0} t^{-\sigma_{i, \nu}(\omega) -1} R_{i, \nu}(\omega), \quad \textnormal{where} \quad \sigma _{i, \nu}(\omega) = -\frac{k_i + v_i(g) + \nu}{N_i},
\end{equation}
locally on \( \overline{X}_i^\circ \), where \( R_{i, \nu}(\omega) = e^{-1} (\rho^{-1})^* {R_i(\widetilde{\omega}_\nu)} \). Notice that for \( \nu = 0 \) we recover the candidate roots \( \sigma_i(\omega) \) from \cref{intro}. In the sequel, we will drop the dependence on the index \( \nu \) whenever \( \nu = 0 \). The restriction of \(R_{i, \nu}(\omega)\) to \(E_i^\circ\) defines a multivalued \( n \)-form and, focusing on the case \( \nu = 0 \), the order of vanishing of the restriction of \(R_{i}(\omega) \) to \(E_{i}^\circ\) along \( E_{\{i,j\}} = E_i \cap E_{j} \neq \emptyset \) equals \( \epsilon_{i_j}(\omega) - 1 \), see \cite[Lemma 10.5]{Bla21} or \cite[\S III.3]{Loe88}.

\subsection{Multivalued forms in \texorpdfstring{\(\mathbb{P}^1_{\mathbb{C}} \setminus S\)}{P1C \ S}} \label{multivalued}

For this section we will set \(n = 1\), and construct a vanishing cycle \( \gamma(t) \in H_1(X_t, \mathbb{C}) \) such that
\[ \lim_{t \rightarrow 0}\, t^{\sigma_{i}(\omega) + 1} \int_{\gamma(t)} \frac{\omega}{\dd f} = a_{-\sigma_i(\omega), 0} \neq 0. \]
In this situation, \( E_i \cong \mathbb{P}^1_{\mathbb{C}} \) and \( R_i(\omega) \) defines a multivalued form on \( E_i^{\circ} \cong \mathbb{P}^1_{\mathbb{C}} \setminus S \), where \( S = \{p_1, \dots, p_{r_i}\} \). We will use the same argument as in \cite{Loe88}. Let \( L \) be the local system on \( \mathbb{P}^1_{\mathbb{C}} \setminus S \) with monodromies \( e^{- 2 \pi \imath \epsilon_{i_j}(\omega)} \) around \( p_j \in S \). Let \( p: F \longrightarrow E_i^\circ \) be the finite cover associated with \( L \) and denote by \( L^\vee \) the dual local system to \( L \). For ease of notation denote \( \mu_{i_j} = 1- \epsilon_{i_j}(\omega) \).

\begin{proposition}[{\cite[Prop. 2.14]{DM86}}] \label{deligne-mostow}
    Assume that \( r_i \geq 3 \), \( \epsilon_{i_j}(\omega) \not\in \mathbb{Z} \) for \( j = 1, \dots r \), and \( \sum_{j = 1}^r \mu_{i_j} = 2 \). Then, every section of \( \Gamma(\mathbb{P}^1_{\mathbb{C}}, \Omega^1(\sum_{j = 1}^r \mu_{i_j} p_j)(L)) \) defines a non-zero cohomology class in \( H^1(\mathbb{P} \setminus S, L) \).
\end{proposition}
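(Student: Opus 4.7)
The plan is to read off the claim from the algebraic de Rham description of $H^1(\mathbb{P}^1_{\mathbb{C}} \setminus S, L)$, using that non-integral residues force the vanishing of global sections of the Deligne canonical extension of $L$.

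First, I would let $\mathcal{L}$ denote the Deligne canonical extension of $L$ to a line bundle on $\mathbb{P}^1_{\mathbb{C}}$ carrying a logarithmic connection $\nabla$, normalized so that the residue $\alpha_j$ of $\nabla$ at each $p_j$ has real part in $[0,1)$ and satisfies $\exp(-2\pi \imath \alpha_j) = \exp(-2\pi \imath \epsilon_{i_j}(\omega))$. Under this normalization, the sheaf $\Omega^1(\sum_{j=1}^r \mu_{i_j} p_j)(L)$ appearing in the statement identifies canonically with $\Omega^1_{\mathbb{P}^1}(\log S) \otimes \mathcal{L}$; the fractional multiplicities $\mu_{i_j} = 1 - \epsilon_{i_j}(\omega)$ encode a combined contribution from the logarithmic pole and the shift of the residue. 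The hypothesis $\sum_j \mu_{i_j} = 2$ then translates into the statement that $\deg(\Omega^1_{\mathbb{P}^1}(\log S) \otimes \mathcal{L}) = -2 + \sum_j \mu_{i_j} = 0$.

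Next, I would invoke Deligne's comparison theorem: the cohomology $H^{\ast}(\mathbb{P}^1_{\mathbb{C}} \setminus S, L)$ is computed by the hypercohomology of the two-term logarithmic de Rham complex
\[ 0 \longrightarrow \mathcal{L} \stackrel{\nabla}{\longrightarrow} \Omega^1_{\mathbb{P}^1}(\log S) \otimes \mathcal{L} \longrightarrow 0. \]
Since $\mathbb{P}^1_{\mathbb{C}}$ is a curve, the associated hypercohomology spectral sequence produces an edge injection
\[ \Gamma(\mathbb{P}^1, \Omega^1_{\mathbb{P}^1}(\log S) \otimes \mathcal{L}) \big/ \nabla\, \Gamma(\mathbb{P}^1, \mathcal{L}) \hookrightarrow H^1(\mathbb{P}^1_{\mathbb{C}} \setminus S, L). \]
It thus suffices to prove that $\Gamma(\mathbb{P}^1, \mathcal{L}) = 0$. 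This is where the assumption $\epsilon_{i_j}(\omega) \notin \mathbb{Z}$ for every $j$ enters: the residues $\alpha_j$ are then all strictly positive, and since the product of the local monodromies around the punctures equals $1$, one has $\sum_j \alpha_j \in \mathbb{Z}_{\geq 1}$. The standard residue computation for Deligne extensions on $\mathbb{P}^1$ yields $\deg \mathcal{L} = -\sum_j \alpha_j < 0$, so $\Gamma(\mathbb{P}^1, \mathcal{L}) = 0$ and the edge map becomes an honest injection from the whole of $\Gamma(\mathbb{P}^1, \Omega^1_{\mathbb{P}^1}(\log S) \otimes \mathcal{L})$ into $H^1$.

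The main obstacle I anticipate is the first step: correctly identifying $\Omega^1(\sum_j \mu_{i_j} p_j)(L)$ with $\Omega^1_{\mathbb{P}^1}(\log S) \otimes \mathcal{L}$ for the appropriate Deligne extension. The fractional divisor $\sum_j \mu_{i_j} p_j$ does not literally define a line bundle, and making the dictionary precise requires tracking how the normalization of the residues of $\mathcal{L}$ absorbs the fractional parts, leaving a bona fide integral twist by $\Omega^1_{\mathbb{P}^1}(\log S)$. Once this identification is in place, the rest of the argument is formal: Deligne's comparison theorem followed by a degree count on $\mathbb{P}^1$.
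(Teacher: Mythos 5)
The paper cites this result from \cite[Prop.~2.14]{DM86} and gives no proof of its own, so the task is to assess your argument on its own merits. Your route --- Deligne's comparison theorem for a logarithmic de Rham complex on $\mathbb{P}^1$, followed by a degree count to kill $H^0$ of the underlying line bundle, followed by the edge-map injection --- is a legitimate way to prove the statement, and the spectral-sequence reduction is exactly right. There is, however, a genuine gap in the identification step, and it is one that matters for the range of $\mu_{i_j}$ that occurs in the paper.

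You normalize the Deligne extension $\mathcal{L}$ so that its residues $\alpha_j$ lie in $[0,1)$. Near $p_j$ a local frame of $\Omega^1_{\mathbb{P}^1}(\log S)\otimes\mathcal{L}$ is $z^{\alpha_j-1}\,dz$, so its sections have vanishing order at least $\alpha_j-1$, whereas a section of $\Omega^1(\sum_j\mu_{i_j}p_j)(L)$ --- for instance $R_i(\omega)$ --- has vanishing order exactly $\epsilon_{i_j}-1=-\mu_{i_j}$ at $p_j$. These sheaves coincide only when $\alpha_j=\epsilon_{i_j}$, i.e.\ when $\epsilon_{i_j}\in(0,1)$, equivalently $\mu_{i_j}\in(0,1)$. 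In the paper's application (proof of \cref{thm-main-2}, case (ii)), \cref{thm-main-1} only guarantees $\epsilon_{i_j}\in(-1,1)\setminus\{0\}$, and negative residue numbers do occur; then $\mu_{i_j}>1$, $\alpha_j=\epsilon_{i_j}+1$, and $\Omega^1(\log S)\otimes\mathcal{L}$ is a \emph{proper} subsheaf of the Deligne--Mostow sheaf, so your injection does not account for the sections with the larger allowed pole. The fix is to work with the Deligne lattice $\mathcal{L}'$ whose residue at $p_j$ is exactly $\epsilon_{i_j}=1-\mu_{i_j}$ (allowed to be negative, never integral). The logarithmic de Rham complex of $\mathcal{L}'$ still computes $H^\bullet(\mathbb{P}^1\setminus S,L)$ because no residue is a positive integer, the identification $\Omega^1(\sum_j\mu_{i_j}p_j)(L)=\Omega^1(\log S)\otimes\mathcal{L}'$ becomes exact, and the residue formula now reads $\deg\mathcal{L}'=-\sum_j\epsilon_{i_j}=-(r-2)=2-r<0$, which is negative precisely because $r_i\geq 3$; then $H^0(\mathcal{L}')=0$ and the rest of your argument goes through. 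It is a tell-tale sign of the wrong lattice that your degree count never explicitly invoked the hypothesis $r_i\geq 3$.
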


Since the restriction of \( R_i(\omega) \) to \( E_i^\circ \) defines a section of \( \Gamma(\mathbb{P}^1_{\mathbb{C}}, \Omega^1(\sum_{j = 1}^r \mu_{i_j} p_j)(L))) \), and assuming that the hypothesis of \cref{deligne-mostow} hold, we have the existence of a twisted cycle \( C \in H_1(E_i^\circ, L^\vee) \) such that
\[ \int_{C} R_i(\omega) \neq 0. \]
By definition, the twisted cycle \( C \in H_1(E_i^\circ, L^\vee) \) gets identified with a cycle in \( H_1(F, \mathbb{C}) \). The restriction of the morphism \( \rho \) to \( \widetilde{E}_i \) is a ramified covering of degree \( N_i \) ramified at the intersection points \( E_i \cap E_j \) with monodromies \( \exp({2 \pi \imath {N_j}/{N_i}}) \). Since the monodromies of \( F \) are
\[ \exp{(2 \pi \imath \epsilon_{i_j}(\omega))} = \exp{\left(-2 \pi \imath (k_i + v_i(g)) \frac{N_j}{N_i}\right)} = \exp{\left(2 \pi \imath \frac{N_j}{N_i} \right)}^{-(k_i + v_i(g))}, \]
the restriction of \( \rho \) to \( \widetilde{E}^\circ_i \) factorizes through a connected component \( F_0 \) of \( F \), namely
\[
    \restr{\rho}{\widetilde{E}^\circ_i} : \widetilde{E}_i^\circ \xrightarrow{\quad q \quad } F_0 \xrightarrow{\ \ {{p|}_{F_0}}\ \ } E_i^\circ.
\]
Being \( q \) also a finite cover, there exists \( m \in \mathbb{Z} \setminus \{0\} \) and a cycle \( \tilde{\gamma} \in H_1(\widetilde{E}_i^\circ, \mathbb{C}) \) such that \( q_* \tilde{\gamma} = m C \). Finally, we can use the local triviality of \( \tilde{f} \) in a neighborhood of \( \widetilde{E}^\circ_i \) to extend \( \tilde{\gamma} \) to a family of locally constant cycles \( \tilde{\gamma}(\tilde{t}) \) in \( H_1(\widetilde{X}_{\tilde{t}}, \mathbb{C}) \), for \( \tilde{t} \in \widetilde{D}' \), such that \( \tilde{\gamma}(0) = \tilde{\gamma} \).

\jump

Setting \( \gamma(\tilde{t}^e) = \rho_* \tilde{\gamma}(\tilde{t}) \), we have constructed the desired vanishing cycle and the above argument can be summarized as
\begin{align}
    \nonumber \int_{\gamma(t)} \frac{\overline{\omega}}{\dd \bar{f}} = \int_{\rho_* \tilde{\gamma}(\tilde{t})} \restr{\frac{\overline{\omega}}{\dd \bar{f}}}{\overline{X}_{\tilde{t}^e}} & = \int_{\tilde{\gamma}(\tilde{t})} \rho^* \left(\restr{\frac{\overline{\omega}}{\dd \bar{f}}}{\overline{X}_{\tilde{t}^e}}\right)                                                    \\
    \nonumber                                                                                                                                                                            & = e^{-1}\tilde{t}^{\,1-e} \int_{\tilde \gamma(\tilde{t})} \restr{\frac{\rho^* \overline{\omega}}{\dd \tilde{f}}}{\widetilde{X}_{\tilde{t}}}       &  & \eqref{easy-eq}              \\
                                                                                                                                                                                         & = \tilde{t}^{\,e(-\sigma_i(\omega) - 1)} \int_{\tilde{\gamma}(\tilde{t})} e^{-1} R_i(\widetilde{\omega}_0) + \cdots                                &  & \eqref{pullback-eq}          \\
                                                                                                                                                                                         & = \tilde{t}^{\,e(-\sigma_i(\omega) - 1)} \int_{\tilde{\gamma}(\tilde{0})} e^{-1} {R_i(\widetilde{\omega}_0)} + \cdots &  & \textnormal{(loc. constant)} \\
    \nonumber                                                                                                                                                                            & = \tilde{t}^{\,e(-\sigma_i(\omega) - 1)} \int_{\rho_* \tilde{\gamma}} R_i(\omega) + \cdots                                                                                           \\
    \nonumber                                                                                                                                                                            & = t^{-\sigma_i(\omega) -1} \, m \int_{C} R_i(\omega) + \cdots                                                                                      &  & (q_* \tilde{\gamma} = m C)
\end{align}
where the dots indicate higher-order terms in \( t \).

\subsection{Logarithmic terms. Part I} \label{subsection-mult2}
For this section, we keep working in dimension \( n + 1\). Let \( I \subseteq T, |I| = r + 1, \) such that \( E_I = \cap_{i \in I} E_i \neq \emptyset \) and that \( \sigma_{i}(\omega) = \sigma_{j}(\omega) \), for all \( i, j \in I \). Let us call this number \(\sigma_I (\omega)\). Notice that this is equivalent to \( \epsilon_{i_j}(\omega) = 0 \), for all \( i, j \in I \). If \(d_I = \gcd(N_i \ | \ i \in I)\), then from Bezout's theorem it follows that \( \sigma_I(\omega) \in \mathbb{Z}[\frac{1}{d_I}] \). Define \( E_I^\circ = E_I \setminus \cup_{j \not\in I} (E_I \cap E_j) \) and similarly for \(\widetilde{E}_I^\circ\) in \(\widetilde{X}\). 

\jump

If we keep denoting \(\widetilde{\omega} = (\rho \pi)^* \omega\), this setting allows a decomposition similar to \eqref{decomposition1-eq}. Let \( \widetilde{X}_I \) be an open tubular neighborhood of \( \widetilde{E}_I \) and similarly define \( \widetilde{X}_I^\circ = \widetilde{X}_I \cap (\widetilde{X} \setminus \widetilde{F}_\pi) \). Working locally on one connected component of \(\widetilde{E}^\circ_I\) and assuming that the variety \( \widetilde{E}_I \) is defined by the equations \( \{ y_i = 0 \}_{i \in I} \), we can decompose \( \widetilde{\omega} \) as
\[ \widetilde{\omega} = \sum_{\underline{\nu} \in S_I(\omega)} \widetilde{\omega}_{\underline{\nu}},\]
by consider the series expansion with respect to the variables \( \{y_i\}_{i \in I} \), and where
\[S_I(\omega) = \left\{ (\nu_i)_{i \in I} \in \mathbb{Z}_{\geq 0}^{r+1} \ \bigg|\ \frac{k_i + v_i(g) + \nu_i}{N_i} = \frac{k_j + v_j(g) + \nu_j}{N_j},\quad \forall i, j \in I \right\}. \]
In this case, each \( \widetilde{\omega}_{\underline{\nu}} \) is a local section of \( \Omega_{\widetilde{X}}^{n+1}(-\sum_{i \in I} \nu_i \widetilde{E}_i) \) with multiplicity along \(\widetilde{E}_i\) equal to
\[ v_i(\widetilde{\omega}_{\underline{\nu}}) = e\frac{k_i + v_i(g) + \nu_i}{N_i} - 1, \quad \textnormal{for all} \quad i \in I. \]
Similarly, \( \prod_{i \in I} y_i^{-v_i(\widetilde{\omega}_{\underline{\nu}_k})} \widetilde{\omega}_{\underline{\nu}_k} \) is defined on \( \widetilde{X}^\circ_I \) and extends holomorphically to \( \sum_{i \in I} \widetilde{E}_i \cap \widetilde{X}_I \). Again, we will only be interested in the first term \( \omega_{\underline{0}} \) which is always non-zero. Locally on \( \widetilde{X}_I^\circ\) one has \( \tilde{f}(y_0, \dots, y_n) = \prod_{i \in I} y_i = \tilde{t} \), hence
\[ \dd \tilde{f} = \sum_{i \in I} \prod_{j \in I \setminus \{i\}} y_j \dd y_i. \]
Choosing \( i \in I \), a computation in local coordinates shows that
\begin{equation} \label{pullback-eq2}
    \frac{\widetilde{\omega}_{\underline{\nu}}}{\dd \tilde{f}} = \sum_{\underline{\nu} \in S_I(\omega)} \tilde{t}^{\,v_I(\widetilde{\omega}_{\underline{\nu}})} R_i(\widetilde{\omega}_{\underline{\nu}}), \quad \textnormal{where} \quad v_I(\widetilde{\omega}_{\underline{\nu}}) = e\frac{k_i + v_i(g) + \nu_i}{N_i} - 1,
\end{equation}
where \( R_i(\widetilde{\omega}_{\underline{\nu}}) = \tilde{f}^{-v_I(\tilde{\omega}_{\underline{\nu}})} \widetilde{\omega}_{\underline{\nu}} / \dd \tilde{f} \) is a holomorphic form on \( \widetilde{X}^\circ_I \)
that extends holomorphically to \( \widetilde{E}_i^\circ \cap \widetilde{X}_I \). Moreover, it is locally constant with respect to \( y_i \). However, the restriction of \(R_i(\widetilde{\omega}_{\underline{\nu}})\) to the fiber \( \prod_{j \in I} y_j = \tilde{t}\) does not depend on the choice of \(i \in I\).

\jump

In the sequel, we will focus on the case \( \underline{\nu} = 0 \). We can assume without loss of generality that \( E_I \) has one connected component. Since \( \epsilon_{i_j}(\omega) = 0 \), the restriction of \( R_i(\widetilde{\omega}_{\underline{0}}) \) to \(\widetilde{E}_i^\circ \cap \widetilde{X}_I\) has logarithmic poles along \( \sum_{j \in I \setminus \{i\}} \widetilde{E}_j \cap \widetilde{E}_i \), see \cite[Lemma 4.7]{Var82a}. Even in the presence of orbifold singularities we can apply the residue map, see \cite[\S 1]{Steen76}, to obtain a non-zero form \(\textnormal{Res}_{\widetilde{E}_I}[R_i(\widetilde{\omega}_{\underline{0}})] \) on \( \widetilde{E}^{\circ}_I \), that does not depend on the choice of \( i \in I\). As before, defining \[ \textnormal{Res}_{E_I}(\omega) = e^{-1} \big(\rho^{-1})^* \,\textnormal{Res}_{\widetilde{E}_I}[R_i(\widetilde{\omega}_0)], \]
we obtain a non-zero multivalued differential form on \( E_I^\circ \) that as order of vanishing equal to \( \epsilon_{I_j}(\omega) - 1 = \epsilon_{i_j}(\omega) - 1 \) along \( E_I \cap E_j \neq \emptyset, j \not\in I \).

\jump

Generalizing the construction from \cref{multivalued}, let \( L \) be the local system on \( E_I^\circ \) with monodromies \( e^{-2 \pi \imath \epsilon_{I_j}(\omega)} \) round \( E_I \cap E_j \neq \emptyset, j \not\in I \). Assume that we can find a twisted cycle \( C \) in \( H_{n-r}(E_I^\circ, L^\vee) \) such that
\begin{equation} \label{eq-residue-form}
    \int_{C} \textnormal{Res}_{E_I}(\omega) \neq 0.
\end{equation}

As in the previous section, if \( p : F \longrightarrow E_I^\circ \) is the finite cover associated with the local system \( L\), then the twisted cycle \( C \in H_{n-r}(E_I^\circ, L^\vee) \) gets identified with an element in \( H_{n-r}(F, \mathbb{C}) \). The restriction of the morphism \( \rho \) to \( \widetilde{E}_I^\circ \) is a ramified covering of degree \( d_I \) ramified along \( E_I \cap E_j, j \not\in I, \) with monodromies \( \exp{(2 \pi \imath N_j / d_I)} \). On the other hand, the monodromies of \( F \) are \( \exp(2 \pi \imath \epsilon_{I_j}(\omega)) \). From the fact that \( \sigma_I(\omega) \in \mathbb{Z}[\frac{1}{d_I}] \),
\[ \exp(2 \pi \imath \epsilon_{I_j}(\omega)) = \exp{\left(-2 \pi \imath N_j \frac{\alpha_I}{d_I}\right)} = \exp{\left(2 \pi \imath \frac{N_j}{d_I}\right)^{-\alpha_I}}, \]
for some \( \alpha_I \in \mathbb{Z}_{>0} \). Therefore, the restriction of \( \rho \) to \( \widetilde{E}_I^\circ \) factorizes through a connected component \( F_0 \) of \( F \),
\[\restr{\rho}{\widetilde{E}^\circ_I} : \widetilde{E}_I^\circ \xrightarrow{\quad q \quad } F_0 \xrightarrow{\ \ {{p|}_{F_0}}\ \ } E_I^\circ,\]
and there exists \( m \in \mathbb{Z} \setminus \{0\} \) and a cycle \( \tilde{\gamma} \in H_{n-r}(\widetilde{E}^\circ_I, \mathbb{C}) \) such that \( q_* \tilde{\gamma} = m C \). Let \( \delta_i^{(r)} : H_{n-r}(\widetilde{E}^\circ_I, \mathbb{C}) \longrightarrow H_{n}(\widetilde{E}^\circ_i, \mathbb{C}) \) be the \(r\)-fold Leray coboundary operator from \cref{expansions} with target \( \widetilde{E}_i^\circ \). Then, the residue theorem asserts that
\begin{equation} \label{residue-thm-eq}
    \int_{\delta^{(r)}_i \tilde{\gamma}} \restr{R_i(\widetilde{\omega}_0)}{E_i^\circ} = (2 \pi \imath)^{r} \int_{\tilde{\gamma}} \textnormal{Res}_{\widetilde{E}_I}[R_i(\widetilde{\omega}_0)].
\end{equation}
The local triviality of \( \tilde{f} \) in a neighborhood of \( \widetilde{E}_i^\circ \) allows us to extend \( \delta^{(r)}_i \tilde{\gamma} \) to a locally constant cycles \( \tilde{\gamma}(\tilde{t}) \) in \( H_{n}(\widetilde{X}_{\tilde{t}}, \mathbb{C}) \), for \( \tilde{t} \in \widetilde{D}' \), such that \( \tilde{\gamma}(0) = \delta^{(r)}_i \tilde{\gamma} \).

\begin{remark}
    The previous application of the residue theorem under the presence of orbifold singularities can be justified by taking a resolution \( \varpi: \widehat{X} \longrightarrow \widetilde{X} \) of \( \widetilde{X} \) and using the fact that the restriction of \( \varpi^* R_i(\widetilde{\omega}_0) \) has still logarithmic poles along the strict transform of \( \sum_{j \in I \setminus \{i\}} \widetilde{E}_j \cap \widetilde{E}_i \), see \cite[\S 1.17]{Steen76} and \cite[\S 4.4]{Var82a}. Moreover, by \cite[Cor. 1.5]{Steen76}, the map \( \varpi_* : H_{n-r}(\widehat{X}, \mathbb{C}) \longrightarrow H_{n-r}(\widetilde{X}, \mathbb{C}) \) is surjective.
\end{remark}

Setting \( \gamma(\tilde{t}^e) = \rho_* \tilde{\gamma}(\tilde{t}) \), we have
\[ \lim_{t \rightarrow 0} t^{\sigma_{I}(\omega)+1} \int_{\gamma(t)} \frac{\omega}{\dd {f}} = a_{-\sigma_{I}(\omega), 0} \neq 0. \]
Indeed, assuming \eqref{eq-residue-form} is true
\begin{align}
    \nonumber \int_{\gamma(t)} \frac{\overline{\omega}}{\dd \bar{f}} = \int_{\rho_* \tilde{\gamma}(\tilde{t})} \restr{\frac{\overline{\omega}}{\dd \bar{f}}}{\overline{X}_{\tilde{t}^e}} & = \int_{\tilde{\gamma}(\tilde{t})} \rho^* \left(\restr{\frac{\overline{\omega}}{\dd \bar{f}}}{\overline{X}_{\tilde{t}^e}}\right)                                                                      \\
    \nonumber                                                                                                                                                                            & = e^{-1}\tilde{t}^{\,1-e} \int_{\tilde \gamma(\tilde{t})} \restr{\frac{\rho^* \overline{\omega}}{\dd \tilde{f}}}{\widetilde{X}_{\tilde{t}}}                        &   & \eqref{easy-eq}              \\
                                                                                                                                                                                         & = \tilde{t}^{\,e(-\sigma_I(\omega) - 1)} \int_{\tilde{\gamma}(\tilde{t})} e^{-1} R_i(\widetilde{\omega}_0) + \cdots                                                 &
                                                                                                                                                                                         & \eqref{pullback-eq2}                                                                                                                                                                                  \\
    \nonumber                                                                                                                                                                            & = \tilde{t}^{\,e(-\sigma_I(\omega) - 1)} \int_{\delta_i^{(r)}\tilde{\gamma}} e^{-1} R_i(\widetilde{\omega}_0) + \cdots             &   & \textnormal{(loc. constant)} \\
    \nonumber                                                                                                                                                                            & = (2 \pi \imath)^{r}\, \tilde{t}^{\,e(-\sigma_I(\omega) - 1)} \int_{\tilde{\gamma}} e^{-1} \textnormal{Res}_{\widetilde{E}_I}[R_i(\widetilde{\omega_0})] + \cdots &   & \eqref{residue-thm-eq}       \\
    \nonumber                                                                                                                                                                            & = (2 \pi \imath)^{r}\, t^{-\sigma_I(\omega) - 1} \int_{\rho_* \tilde{\gamma}} \textnormal{Res}_{E_I} (\omega) + \cdots                                            &   & \eqref{eq-residue-form}      \\
    \nonumber                                                                                                                                                                            & = (2 \pi \imath)^{r}\, t^{-\sigma_I(\omega) -1} \, m \int_{C} \textnormal{Res}_{E_I}(\omega) + \cdots                                                             &   & (q_* \tilde{\gamma} = m C)
\end{align}
where the dots indicate higher-order terms in \(t\).

\jump

\subsection{Logarithmic terms. Part II} \label{sec-log-terms}
We will show next how for \( \sigma_I (\omega), |I| = r + 1, \) a term \( (\ln t )^r \) might appear in the asymptotic expansions. Instead of using the Leray coboundary to construct an \(n\)-cycle in the Milnor fiber starting from \(\tilde{\gamma}\) in \(\widetilde{E}^\circ_I\), we can construct locally an \(n\)-dimensional differentiable simplex \(\Gamma(\tilde{t}\,)\) using \(\tilde{\gamma}\) such that the integral of \(R_i (\widetilde{\omega}_0) \) depends on \((\ln \tilde{t})^r \). How to globalize this argument to get a relative vanishing cycle in the whole fiber is not clear in general, however, we will do so for dimension \(n = 1\) in the next section.

\jump

Let \(\widetilde{U}_I \subset \widetilde{X}_I \) be an affine open neighborhood and, for ease of notation, assume that \(I = \{0, \dots, r\} \) and \(i = 0\). Let \(y_0, \dots, y_n \) be local coordinates such that a connected component of \(\widetilde{E}_j\) is defined by \( y_j = 0 \), for \(j = 0, \dots, r\), and a connected component of the fiber \(\widetilde{U}_I \cap \widetilde{X}_{\tilde{t}}\) by \(y_0 \cdots y_r = \tilde{t}\). If \( \Delta_n \) is the standard \(n\)-dimensional simplex, which we can think homeomorphic to \( \Delta_{r} \times \Delta_{n-r} \), we can construct the following differentiable simplex on \( \widetilde{X}_{\tilde{t}} \)
\begin{equation*}
    \begin{split}
        \Gamma(\tilde{t}) : \Delta_n \cong &\, \Delta_r \times \Delta_{n-r} \longrightarrow \widetilde{U}_I \cap \widetilde{X}_{\tilde{t}} \cong \mathbb{C}^r \times \mathbb{C}^{n-r}, \\
        &\quad(\underline{\tau}, \underline{\theta})  \longmapsto \Big(\overline{\Gamma}_0(\underline{\tau}), \dots, \overline{\Gamma}_r(\underline{\tau}), \widetilde{\Gamma}(\underline{\theta})\Big)
    \end{split}
\end{equation*}
where \(\overline{\Gamma} : \Delta_r \longrightarrow \mathbb{C}^r \) is a differentiable simplex that sends the \(k\)--th vertex of the standard simplex \(\Delta_r\) to the point \(p_k = (1,  \overset{(k)}{\dots}, 1, \tilde{t}, 1, \dots, 1) \), and \(\widetilde{\Gamma}\) is one differentiable simplex in a simplicial chain representing the \(n\)-cycle \(\tilde{\gamma}\) from the previous section on the connected component \(\{y_i = 0\}_{i \in I}\) of \(\widetilde{E}_I^\circ\).

\jump

More explicitly, we can take \( \overline{\Gamma}_k(\tau_k) \), for \(k = 1, \dots, r\), to be a segment joining \(1\) and \(\tilde{t}\) and \(\overline{\Gamma}_0(\underline{\tau}) = \tilde{t} / \prod_{k \neq 0} \overline{\Gamma}_k(\tau_k) \) so that the image lies on the fiber \(y_0 \cdots y_r = \tilde{t}\). For simplicity, we can also assume that \( \tilde{\gamma} \) consists just of one differentiable simplex \( \widetilde{\Gamma} \), otherwise we can repeat the same construction for each simplex in a simplicial chain representing \(\tilde{\gamma}\).

\jump

Finally,
\begin{align} 
    \nonumber \int_{\Gamma(\tilde{t})} R_i(\widetilde{\omega}_0) & = \int_{\Gamma(\tilde{t})} \frac{\dd y_1}{y_1} \wedge \cdots \wedge \frac{\dd y_r}{y_r} \wedge \textnormal{Res}_{\widetilde{E}_I}[R_i(\widetilde{\omega}_0)]                                                     \\
    & = \int_{\overline{\Gamma}} \frac{\dd y_1}{y_1} \wedge \cdots \wedge \frac{\dd y_r}{y_r} \int_{\widetilde{\Gamma}} \textnormal{Res}_{\widetilde{E}_I}[R_i(\widetilde{\omega}_0)]              \label{eq-log-terms} \\
    \nonumber                                                    & = \int_{1}^{\tilde{t}} \int_{1}^{\tau_1} \cdots \int_{1}^{\tau_{r-1}} \frac{\dd y_r}{y_r} \frac{\dd y_{r-1}}{y_{r-1}} \cdots \frac{\dd y_1}{y_1} \int_{\rho_* \tilde{\gamma}} \textnormal{Res}_{E_I}(\omega) \\
    \nonumber                                                    & = \frac{\big(\ln \tilde{t}\,\big)^r}{r!} \int_{C} \textnormal{Res}_{E_I}(\omega),
\end{align}
where the last equalities follow from \( \Delta_r = \{ \underline{\tau} \in \mathbb{R}^r \ |\ 0 \leq \tau_r \leq \cdots \leq \tau_1 \leq 1  \}\) and the identity
\[
    \int_{1}^{\tau} \frac{(\ln z)^k}{k!} \frac{\dd z}{z} = \frac{(\ln \tau)^{k+1}}{(k+1)!}.
\]

\subsection{Logarithmic terms for plane curves} \label{sec-log-terms-plane}
For this section, we need to set \(n = 1\), but we will keep using the same notations as in previous sections. In this setting we will construct a relative vanishing cycle in \(H_1^{lf}(X_t \setminus \partial X_t, \mathbb{C})\) such that the corresponding period integral has an asymptotic expansion of order \(t^{\alpha - 1} \ln t\), for certain \(\alpha \in \mathbb{Q}_{> 0}\).

\jump

To globalize the constructions from \cref{sec-log-terms} we will make use of the maximal admissible chains constructed in \cref{admissible}. Hence, let us start with \(\omega \in \Omega^2_{\mathbb{C}^2}\) and \(I = \{i, j\} \subset T,\) such that \(E_i \cap E_j \neq \emptyset \) and \(\sigma_{I}(\omega) = \sigma_i(\omega) = \sigma_j(\omega)\). By \cref{prop-admissible-chains}, we can assume that \(\omega \in \Omega^2\) is such that \(\mathcal{M}(\omega)\) has a maximal admissible chain

\begin{center}
    \begin{picture}(500,60)(-30,-10)
        \put(50,20){\circle*{4}}
        \put(50,20){\line(1,0){50}}
        \put(100,20){\circle*{4}}
        \put(111,20){\makebox(0,0){\(\dots\)}}
        \put(140,20){\makebox(0,0){\(\dots\)}}
        \put(150,20){\circle*{4}}
        \put(150,20){\line(1,0){50}}
        \put(200,20){\circle*{4}}
        \put(200,20){\line(1,0){50}}
        \put(250,20){\circle*{4}}
        \put(261,20){\makebox(0,0){\(\dots\)}}

        \put(289,20){\makebox(0,0){\(\dots\)}}
        \put(300,20){\circle*{4}}
        \put(300,20){\line(1,0){50}}
        \put(350,20){\circle*{4}}

        \put(50,32){\makebox(0,0){\(E_{k_0}\)}}
        \put(100,32){\makebox(0,0){\(E_{k_1}\)}}
        \put(202,32){\makebox(0,0){\(E_i = E_{k_i}\)}}
        \put(300,32){\makebox(0,0){\(E_{k_{p-1}}\)}}
        \put(350,32){\makebox(0,0){\(E_{k_p}\)}}
    \end{picture}
\end{center}
containing \(E_i\). Around each intersection point \(\widetilde{E}_{k_j} \cap \widetilde{E}_{k_l}\) between adjacent divisors in the maximal admissible chain we consider small polydisks \(\widetilde{D}_{j, l}\) such that the parts of the Milnor fiber \(\widetilde{X}_{\tilde{t}}\) over them have local equations \(y_j y_l = \tilde{t}\). Since the intersection \(E_{k_j} \cap E_{k_l}\) is just a point, \eqref{eq-residue-form} always holds, and the construction from \cref{sec-log-terms} gives a differentiable \(1\)-simplex \(\Gamma_{j, l}(\tilde{t})\) whose image lies in one of the connected components of \( \widetilde{X}_{\tilde{t}}\) over \(\widetilde{D}_{j, k}\).

\jump

We will start constructing the cycle from the endpoints of the maximal admissible chain, which are either rupture divisors or arrowheads. If an endpoint \(E_{k_0} \) of the admissible chain is a rupture divisor, the maximality of the chain implies that there exists at least one divisor \(E_{k_{0, r}}\) adjacent to \(E_{k_0}\) such that \(\epsilon_{k_{0, r}}(\omega) \in \mathbb{Q} \setminus \mathbb{Z}\). Let \(L\) be the local system introduced in \cref{multivalued} associated with \(E_{k_0}\) and \(\sigma_{k_0}(\omega)\). Denote \(C_{k_0}\) a twisted cycle on \(E_{k_0}\) that loops once counterclockwise around the point \( q_{0, r} = E_{k_0} \cap E_{k_{0, r}}\) and that starts and ends in the boundary of the polydisk \(D_{0, r}\). 

\begin{center}
\begin{tikzpicture}
[decoration={markings,
mark=at position 1.4cm with {\arrow[line width=1pt]{<}},
mark=at position 4cm with {\arrow[line width=1pt]{<}},
mark=at position 7cm with {\arrow[line width=1pt]{<}},
mark=at position -1cm with {\arrow[line width=1pt]{<}},
}
]

\draw[dotted, line width=0.8pt] (4,0) -- (3,0);
\path[draw,line width=0.8pt,postaction={decorate}] +(3,0) -- +(1,0) arc (-10:-350:1) -- +(2, 0);
\draw[dotted, line width=0.8pt] (4,0.35) -- (3,0.35);

\node at (0.03,0.15) [circle,fill,inner sep=1.5pt]{};
\node[below left] at (0.1,0.1) {\(q_{0, r}\)};
\node at (-1.1,1.2) {\(C_{k_0}\)};
\end{tikzpicture}
\jump
\end{center}

Arguing as in \cref{multivalued}, \(C_{k_0}\) defines a cycle \(\tilde{\gamma}_{k_0}\) in a connected component \(F_0\) of \(F\). Transporting the cycle \(\tilde{\gamma}_{k_0}\) in \(F_0 \setminus \widetilde{D}_{0, 1}\) to the nearby fiber, we obtain a family of cycles \(\tilde{\gamma}_{k_0}(\tilde{t})\) with limit \(\tilde{\gamma}_{k_0}\). Due to the non-trivial monodromy around the point \(q_{0, r}\), each endpoint of the cycle in the boundary will lie on different connected component of the fiber \(\widetilde{X}_{\tilde{t}}\) over \(\widetilde{D}_{0, 1}\). By using one differentiable simplex \(\Gamma_{0, 1}(\tilde{t})\) from \cref{sec-log-terms} in each component we can extend this cycle to the part of the fiber over the next divisor \(E_{k_1}\) in the admissible chain.

\jump

We can now extend both ends of this cycle to the parts of the fiber \(\widetilde{X}_{\tilde{t}}\) over the subsequent divisors in the admissible chain. To do so, lift to the fiber straight paths in \(E^{\circ}_{k_j},\) for all \(j = 1, \dots, p\), connecting the boundaries of the polydisks around the intersections and joining them with the differentiable simplices from \cref{sec-log-terms}. Eventually, we must reach an endpoint \(E_{k_p}\) in the maximal admissible chain.

\jump

If an endpoint \(E_{k_p}\) is an arrowhead, the fiber \(\widetilde{X}_{\tilde{t}}\) over the boundary of a small enough polydisk \(\widetilde{D}_{p-1, p}\) is contained in the boundary \(\partial \widetilde{X}_{\tilde{t}}\) of the fiber. Hence, the resulting cycle defines a relative vanishing cycle. Otherwise, \(E_{k_p}\) is rupture exceptional divisor and the maximality of the chain implies, as before, that there is \(E_{k_p, s}\) adjacent to \(E_{k_p}\) such that \(\epsilon_{k_p,s}(\omega) \in \mathbb{Q} \setminus \mathbb{Z}\). In this case, the cycle can be continued by making the limiting cycle do a double loop around \(q_{0, r}\) and \(q_{p, s} = E_{k_p} \cap E_{k_p, s}\),

\begin{center}
\begin{tikzpicture}
\draw[dotted, line width=0.8pt] (2.45,0) -- (3.56,0);
\draw[dotted, line width=0.8pt] (2.45,0.33) -- (3.56,0.33);
\draw[dotted, line width=0.8pt] (2.45,-0.33) -- (3.56,-0.33);
\draw[dotted, line width=0.8pt] (2.45,-0.66) -- (3.56,-0.66);

\path[draw,line width=0.8pt,postaction={decorate, decoration={markings,
mark=at position 1cm with {\arrow[line width=1pt]{<}},
mark=at position 5cm with {\arrow[line width=1pt]{<}},
mark=at position 6.8cm with {\arrow[line width=1pt]{<}}
}}] +(2.38, 0) -- ([shift={(-0:0.5cm)}]-1,0) arc (-0:-320:0.5) -- +(3, 0);

\path[draw,line width=0.8pt,postaction={decorate, decoration={markings,
mark=at position 0.5cm with {\arrow[line width=1pt]{>}},
mark=at position 4cm with {\arrow[line width=1pt]{>}},
mark=at position 9.5cm with {\arrow[line width=1pt]{>}}
}}] +(2.4, -0.66) -- ([shift={(-40:1cm)}]-1, 0) arc (-40:-380:1) -- +(2.45, 0);

\path[draw,line width=0.8pt,postaction={decorate, decoration={markings,
mark=at position 1cm with {\arrow[line width=1pt]{>}},
mark=at position 5cm with {\arrow[line width=1pt]{>}},
mark=at position 8cm with {\arrow[line width=1pt]{>}}
}}] +(3.63, -0.33) -- ([shift={(220:0.5cm)}]7,0) arc (220:500:0.5) -- +(-3, 0);

\path[draw,line width=0.8pt,postaction={decorate, decoration={markings,
mark=at position 2cm with {\arrow[line width=1pt]{<}},
mark=at position 5cm with {\arrow[line width=1pt]{<}},
mark=at position 9cm with {\arrow[line width=1pt]{<}}
}}] +(3.63,-0.66) -- ([shift={(220:1cm)}]7, 0) arc (220:540:1) -- +(-2.38,0);

\node at (-1,0) [circle,fill,inner sep=1.5pt]{};
\node at (7,0) [circle,fill,inner sep=1.5pt]{};
\node[below left] at (-1.7,1.5) {\(C_{k_0}\)};
\node[below left] at (8.7,1.5) {\(C_{k_p}\)};
\end{tikzpicture}
\end{center}

In this case, the constructed cycle defines a non-relative vanishing cycle in \(\widetilde{X}_{\tilde{t}}\).

\begin{proposition} \label{prop-log-cycles}
With the notations above, let us denote \(\tilde{\gamma}(\tilde{t})\) the concatenation of all the cycles in \(\widetilde{X}_{\tilde{t}}\) constructed following the admissible chain in \(\mathcal{M}(\omega)\). Then, \(\tilde{\gamma}(\tilde{t})\) defines a class in \(H_1^{lf}(\widetilde{X}_{\tilde{t}} \setminus \partial \widetilde{X}_{\tilde{t}}, \mathbb{C})\) and setting \( \gamma(\tilde{t}^e) = \rho_* \tilde{\gamma}(\tilde{t}) \),
\[
    \int_{\gamma(t)} \frac{\omega}{\dd f} = a_{-\sigma_{I}(\omega), 1}\, t^{-\sigma_{I}(\omega) - 1}\, \frac{\ln t}{2} + a_{-\sigma_{I}(\omega), 0}\, t^{-\sigma_{I}(\omega) - 1} + \cdots
\]
with \(a_{-\sigma_{I}(\omega), 1}\neq 0\), and where the dots indicate higher-order terms in \(t\).
\end{proposition}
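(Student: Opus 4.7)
The plan is to establish the proposition in three steps: verifying that the concatenated chain $\tilde{\gamma}(\tilde{t})$ is a legitimate class in $H_1^{\mathrm{lf}}(\widetilde{X}_{\tilde{t}} \setminus \partial \widetilde{X}_{\tilde{t}}, \mathbb{C})$, computing the leading asymptotic of the period integral along it, and showing that the coefficient of the logarithmic term is nonzero.

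First, I would verify closedness of $\tilde{\gamma}(\tilde{t})$ modulo $\partial \widetilde{X}_{\tilde{t}}$. The chain is built from three kinds of pieces: the Leray-type differentiable $1$-simplices $\Gamma_{j-1,j}(\tilde{t})$ of \cref{sec-log-terms} inside the polydisks $\widetilde{D}_{j-1,j}$ at each node $\widetilde{E}_{k_{j-1}} \cap \widetilde{E}_{k_j}$; lifted paths in the fiber above straight arcs in the open parts $E_{k_j}^\circ$ joining the boundaries of adjacent polydisks; and closing pieces at the endpoints, either double loops around the ramification points over $q_{0,r}$ and $q_{p,s}$ at rupture endpoints, or exits into $\partial \widetilde{X}_{\tilde{t}}$ above arrowheads. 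At each internal interface, the endpoints of $\Gamma_{j-1,j}(\tilde{t})$ on the polydisk faces match those of the neighboring path segments after choosing a sheet of the cover $\widetilde{E}_{k_j}^\circ \to E_{k_j}^\circ$ compatibly with the monodromy prescription used to define the twisted cycles. At a rupture endpoint the pair of parallel strands exiting the final simplex close up into a double loop precisely because they get swapped by the local monodromy of $\rho$ around the ramification point; at an arrowhead the remaining boundary lies in $\partial \widetilde{X}_{\tilde{t}}$, which is permitted for a locally finite relative class.

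Second, I would decompose the integral $\int_{\tilde{\gamma}(\tilde{t})} \rho^*\bar{\omega}/\dd \tilde{f}$ along these pieces and use the expansion \eqref{pullback-eq2}. Because every divisor in the admissible chain has $\sigma_{k_j}(\omega) = \sigma_I(\omega)$, the leading order on every polydisk is a uniform $\tilde{t}^{\,v_I(\widetilde{\omega}_0)}$ with $v_I(\widetilde{\omega}_0) = e(-\sigma_I(\omega) - 1)$. The calculation \eqref{eq-log-terms} specialized to $n = r = 1$ then produces
\[
    \int_{\Gamma_{j-1,j}(\tilde{t})} R_i(\widetilde{\omega}_0) = (\ln \tilde{t})\,\textnormal{Res}_{\widetilde{E}_{\{k_{j-1},k_j\}}}[R_i(\widetilde{\omega}_0)] + O(1),
\]
while the path segments and endpoint pieces generate no logarithm: there $R_i(\widetilde{\omega}_0)$ restricts to a locally constant form in the transverse direction, so the integrals are bounded in $\tilde{t}$ and contribute only to the $\tilde{t}^{\,v_I(\widetilde{\omega}_0)}$ coefficient. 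Pushing down along $\rho$ as in \cref{subsection-mult2} and using $\tilde{t}^e = t$, $\ln \tilde{t} = \tfrac{1}{e}\ln t$, yields the stated expansion with
\[
    a_{-\sigma_I(\omega),1} \;\propto\; \sum_{j=1}^{p} \textnormal{Res}_{E_{\{k_{j-1},k_j\}}}(\omega).
\]

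The main difficulty is showing that this sum is nonzero. Each individual residue is nonzero: at the node $\widetilde{E}_{\{k_{j-1},k_j\}}$ the vanishing $\epsilon_{k_{j-1}}(\omega) = \epsilon_{k_j}(\omega) = 0$ forces $R_i(\widetilde{\omega}_0)$ to carry a logarithmic pole with nonzero Poincaré residue, as one reads off the explicit local description cited from \cite[Lemma 10.5]{Bla21} together with \cref{subsection-mult1,subsection-mult2}. Since the admissible chain is a simple linear path traversed in a single direction, the orientations of the simplices $\Gamma_{j-1,j}(\tilde{t})$ are induced consistently from the ambient orientation of $\tilde{\gamma}(\tilde{t})$, so the individual residues enter the sum with a coherent sign and the total does not cancel, yielding $a_{-\sigma_I(\omega),1} \neq 0$.
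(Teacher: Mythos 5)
Your proposal identifies the right structure — decomposing the cycle into polydisk simplices and connecting paths, extracting the logarithmic term locally from \eqref{eq-log-terms}, and reducing to a nonvanishing of the sum of local contributions — but there is a genuine gap in the final step, which is the heart of the argument.

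You assert that because the admissible chain is a simple path traversed in a single direction, ``the orientations of the simplices are induced consistently from the ambient orientation, so the individual residues enter the sum with a coherent sign and the total does not cancel.'' This does not work: the local residues $\textnormal{Res}_{E_{\{k_{j-1},k_j\}}}(\omega)$ are nonzero \emph{complex} numbers, and there is no notion of ``coherent sign'' that prevents a sum of nonzero complex numbers from vanishing. In fact the residues at different nodes depend on the local coordinates chosen there, so a priori they are unrelated constants; orientation consistency of the $1$-simplices gives no control over the phases or magnitudes. The paper's proof closes this gap by an explicit local-coordinate computation: writing $\tilde{f} = y_0 y_1(c+by_1)$ near $\widetilde{E}_{k_j}$ with both adjacent nodes in view, and tracking how the change of variables $\widetilde{y}_1 = y_1(c+by_1)$ (resp.\ $z_1 = c + by_1$) simultaneously rescales the leading Taylor coefficient of $\widetilde{\omega}$ and the integration path, it shows that after normalization \emph{every} local contribution along one strand equals $1$. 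Only this equality (not just alignment) guarantees the total is $p \cdot (\text{nonzero}) \neq 0$. You also need the separate observation — present in the paper but only implicit in your write-up — that the two strands joining the endpoints cannot cancel each other, which the paper deduces from the nontrivial monodromy of the covering around $q_{0,r}$ and $q_{p,s}$. Without both the explicit ``all contributions equal'' calculation and the monodromy argument between the two strands, the nonvanishing of $a_{-\sigma_I(\omega),1}$ is not established.
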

\begin{proof}
It follows from the discussion in \cref{sec-log-terms} that the integral of \(\widetilde{\omega}/ \dd \tilde{f}\) along \(\tilde{\gamma}(\tilde{t})\) will have a logarithmic term of the form \(t^{-\sigma_I(\omega)-1}\, \ln t\). What remains to be seen is that the sum of the local contributions at each intersection \(E_{k_j} \cap E_{k_{j+1}} \) in the admissible chain is not zero. 

\jump

Locally, the integral on the part of the fiber \(\widetilde{X}_{\tilde{t}}\) over the small polydisk \(\widetilde{D}_{j, j+1}\) has a non-zero logarithmic term \(t^{-\sigma_I(\omega)-1}\, \ln t\) since \(E_{k_{j}} \cap E_{k_{j+1}}\) is just a point and \eqref{eq-residue-form} holds. To show that the sum of all local terms is not zero, it is enough to focus on one of the paths joining the endpoints of the admissible chain. Indeed, the presence of monodromy around the points \(q_{0, r}\) and \(q_{p,s}\) imply that the integrals of \(\widetilde{\omega} / \dd \tilde{f}\) along two such paths will not cancel out.

\jump

Let \(E_{k_j}, E_{k_{j+1}}\) be a pair of adjacent divisors in the maximal admissible path. After rescaling we can assume the integral over \(\Gamma_{j,j+1}(\tilde{t})\) is given by \eqref{eq-log-terms}. Let \(a^{j}_{-\sigma_{I}(\omega), 1}\) be the coefficient of \(t^{-\sigma_I(\omega) - 1}\, \ln t\) when integrating over \(\Gamma_{j, j+1}(\tilde{t})\). In this case, this coefficient equals 
\[
    \int_{C} \textnormal{Res}_{E_I}(\omega),
\]
which, since \(E_{\{k_j, k_{j+1}\}}\) is just a point and \(\sigma_{k_j}(\omega) = \sigma_{k_{j+1}}(\omega)\), it is just the coefficient \(a^{j+1}_{(0,0)}\) of the lowest order term in \(y_0 y_1\) of \(\widetilde{\omega} = \tilde{g}(y_0, y_1)\, \dd y_0 \dd y_1\), for some local coordinates \(y_0, y_1\) around \(\widetilde{E}_{k_{j}} \cap \widetilde{E}_{k_{j+1}}\) that satisfy \(\tilde{f} = y_0 y_1 = \tilde{t}\). 

\jump

We will show next that, for all intersections in the admissible chain, all local contributions are equal. To show this, it is enough to consider \(E_{k_{j-1}}\) another divisor in the admissible chain intersecting \(E_{k_j}\). Assume that, locally around \(\widetilde{E}_{k_{j}} \cap \widetilde{E}_{k_{j+1}}\), the divisor \(\widetilde{E}_{k_j}\) is defined by \(y_0 = 0\), and the intersection point \(\widetilde{E}_{k_j} \cap \widetilde{E}_{k_{j-1}}\) is at \(y_1 = \lambda \in \mathbb{C}^*\). In these local coordinates, and for the sake of this argument, it is enough to assume that
\[
    \tilde{f} = y_0 y_1 (c + b y_1), \qquad \widetilde{\omega} = y_0^{v_j} y_{1}^{v_{j+1}} (c + b y_1)^{v_{j-1}} \dd y_0 \dd y_1,
\]
locally around \(\widetilde{E}_{k_j} \cap \widetilde{E}_{k_{j+1}}\), with \(\lambda = -b/c\), and where we denoted \(v_s = e(k_s + v_s(\omega))/N_s - 1\) to simplify the notations. Let us first compute \(a^{j+1}_{-\sigma_{I}(\omega), 1}\) explicitly. Consider the change of coordinates \(\widetilde{y}_1 = y_1 (c + b y_1)\). Then,
\[
    \tilde{f} = y_0 \widetilde{y}_1, \qquad \widetilde{\omega} = y_0^{v_j} \widetilde{y}_1^{v_{j+1}} \widetilde{u}(y_0, \widetilde{y}_1)^{v_{j+1} + 1} c^{v_{j-1}} \dd y_0 \dd \widetilde{y}_1 + \cdots,
\]
where the dots indicate higher-order terms in \(y_0\) or \(\tilde{y}_1\), and \(\widetilde{y}_1 \widetilde{u}(y_0, \widetilde{y}_1)\) is the functional inverse of \(y_1 (c + b y_1)\). In particular, \(\widetilde{u}(0,0) = c^{-1}\), and the integration path gets transformed to \(c\, \Gamma_{j, j+1}(\tilde{t})\). Therefore, \(a^{j+1}_{(0,0)} = c^{-1}\), since we are assuming \(\sigma_{k_{j+1}}(\omega) = \sigma_{k_{j-1}}(\omega)\), and hence \(a^{j+1}_{-\sigma_I(\omega), 1} = 1\). Now, performing the change of coordinates \( z_1 = c + b y_1\),
\[
    \tilde{f} = b^{-1} y_0 z_1 (z_1 - c), \qquad \widetilde{\omega} = b^{-v_{j+1}-1} y_0^{v_j} z_1^{v_{j-1}}  (z_1 - c)^{v_{j+1}} \dd y_0 \dd z_1,
\]
and the integration path is \(b\, \Gamma_{j,j-1}(\tilde{t})\). The same computation shows that \(a_{(0,0)}^{j-1} = -c^{-1}b^{-1}\) and, integrating along \(-c b \Gamma_{j, j-1}(\tilde{t})\) one gets that \(a^{j-1}_{-\sigma_I(\omega),1} = 1\), as required. The case where \(\lambda = \infty\) can be proven similarly by performing the appropriate change of coordinates, namely \(w_0 = z_1 x^{\kappa}_0, z_1 y_0 = 1\), where \(\kappa = -E_{k_j} \cdot E_{k_j}\). 
\end{proof}

\begin{proof}[Proof of \cref{thm-main-2}]
    We will begin by showing that the results are independent of the differential form \(\omega = g \dd x\) chosen in \cref{thm-main-1}. 
    
    \jump
    
    Since \(E_i\) is rupture, \cref{cor2-prop-rup2} implies that \(\epsilon_{i_1}(\omega) = 0\), respectively \(\epsilon_{i_2}(\omega) = 0\), if and only if there is at least one arrowhead in the connected component of \(\Delta_{\pi} \setminus \{E_i\}\) containing \(E_{i_1}\), respectively \(E_{i_2}\). Similarly, by \cref{cor-eps-proposition,cor-eps-prop}, \(\epsilon_{i_j}(\omega) = 0 \), for some \(j \in \{3, \dots, m_i\}\), if and only if we are not in the case \((\dag)\). Hence, the conclusion in the first part only depends on the minimal log resolution. For the second part, it is clear that, as long as \(\sigma_i(\omega)\) remains constant, the condition \(\epsilon_{i_j}(\omega) \in \mathbb{Q} \setminus \mathbb{Z}\) is independent of the differential form chosen. 
    
    \jump
    
    Let \(\omega = g \dd x \in \Omega^2_{\mathbb{C}^2, x}\) such that \(v_i(g) < N_i\). If \(\omega\) has one residue number equal to zero, then, by \cref{prop-admissible-chains}, we can assume that \(\mathcal{M}(\omega)\) contains a maximal admissible chain. Therefore, \cref{prop-log-cycles} implies that there exists a relative cycle \(\gamma(t) \in H^{lf}_1(X_t \setminus \partial X_t, \mathbb{C})\) such that the integral of \(\omega/\dd f\) along \(\gamma(t)\) depends on \(t^{-\sigma_i(\omega) - 1}\,\ln t\). Consequently, \(\sigma_i(\omega)\) is a root of \(b_{f,x}(s)\) with multiplicity 2 by \cref{prop-bfct}.
    
    \jump

    Assume now that \(\omega\) has at least three residue numbers in \( \mathbb{Q} \setminus \mathbb{Z}\) and none of the others vanish, as otherwise we would be in the previous case. \cref{thm-main-1} implies that all residue numbers in \(\mathbb{Z}\) must be equal to one. In addition, there is at least one \(\epsilon_{i_j}(\omega) \in \mathbb{Q}\setminus\mathbb{Z}\), for some \(j \in \{3, \dots, m_i\}\). If the form \(R_i(\omega)\) has an extra zero on \(E_i^\circ\), that is, with the notation from \cref{sec-bounds-residue} \(\alpha_{m_i+1} > 0\), we can construct another differential form \(\omega' \in \Omega^2_{\mathbb{C}^2, x}\) such that \(\epsilon_{i_j}(\omega') = \epsilon_{i_j}(\omega) + \alpha_{m_i+1} \in \mathbb{Q} \setminus \mathbb{Z}\), for some \(j \in \{3 \dots, m_i\}\), and still \(\sigma_i(\omega') = \sigma_i(\omega)\). Indeed, we can replace the factor \(f_i^{\gamma_i}\), with \(\gamma_i = \alpha_{m_i+1}\), in \(\omega\) by the factor \(f_{v,j}^{\gamma_i}\), see \cref{sec-bounds-residue}. 
    
    \jump
    
    Then, ignoring those intersections with residue numbers equal to one and after \cref{lemma-sumeps}, the hypotheses of \cref{deligne-mostow} hold. By \cref{multivalued}, there exists a vanishing cycle \(\gamma(t) \in H_1(X_t, \mathbb{C})\) such that the integral of \(\omega' / \dd f\) along \(\gamma(t)\) depends on \(t^{-\sigma_{i}(\omega) - 1}\). Consequently, \cref{prop-bfct} implies that \(\sigma_i(\omega)\) is a root of \(b_{f,x}(s)\) with multiplicity 1.
    
    \jump

    For a non-exceptional divisors \(E_i\) with multiplicity \(N_i\) we can always find a point \(p\) in \(X_0\) and local coordinates such that, locally, \(f = x_0^{N_i}\). In this case, it is well-known that the Bernstein-Sato polynomial of \(f\) at \(p\) has roots \(-\frac{1}{N_i}, \dots, -\frac{N_i-1}{N_i}, -1\), which are roots of geometric origin since for \(\omega = f^\nu \dd x\), one has \(\sigma_i(\omega) = -\frac{\nu + 1}{N_i}\). Finally, since \(b_{f, p}(s)\) must divide \(b_{f, x}(s)\), the last claim follows.
\end{proof}

\begin{remark} \label{rmk-multiplicity}
    Since for any exceptional divisor \(E_i\), \(v_i(f^\nu \dd x) = \nu N_i\), the only possible topological root coming from a non-exceptional divisor that can be a double root in \cref{thm-main-2} is \(-\frac{1}{N_i}\). In this case, the multiplicity will be detected by the exceptional divisor adjacent to the strict transform, except for the trivial case \(f = x^Ny^M \). Moreover, if \(-\frac{1}{N_i}\) has multiplicity 2, then it is necessarily the log canonical threshold at \(x \in \mathbb{C}^2\), see \cite[Thm. 4.2]{Vey95}.
\end{remark}

Since irreducible plane curves have semisimple monodromy, see \cite{Tra72}, the reduced Bernstein-Sato polynomial has no multiple roots and, for this case, we have the following closed expression for the topological roots given by \cref{thm-main-2}.

\begin{corollary} \label{cor-thm-2}
    Let \(f : (\mathbb{C}^2, x) \longrightarrow (\mathbb{C}, 0)\) be an irreducible plane curve with semigroup \(\Gamma = \langle \overline{\beta}_0, \overline{\beta}_1, \dots, \overline{\beta}_g \rangle\). Then, the set of topological roots of geometric origin from \cref{thm-main-2} is
    \[
        \bigcup_{i= 1}^g \left\{ \sigma_\nu = - \frac{m_i + n_1 \cdots n_i + \nu}{n_i \overline{\beta}_i} \ \bigg|\ \nu \in \Gamma_i,\, 0 \leq \nu < n_i \overline{\beta}_i,\, e_{i-1} \sigma_{\nu}, \overline{\beta}_i \sigma_\nu \not\in \mathbb{Z} \right\} \cup \{-1\},
    \]
    where \(\Gamma_i \) is the semigroup \(\langle n_1 \cdots n_{i}, n_2 \cdots n_{i} \overline{m}_1, \dots, n_i \overline{m}_{i-1}, \overline{m}_i \rangle\).
\end{corollary}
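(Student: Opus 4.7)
The strategy is to apply \cref{thm-main-2} directly to each of the $g$ rupture divisors of the minimal log resolution of $f$ and to rewrite the combinatorial conditions that appear there as divisibility relations in the semigroup data.

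First, since $f$ is a plane branch with Puiseux pairs $(q_1,n_1),\dots,(q_g,n_g)$, the minimal log resolution contains exactly $g$ rupture divisors $E_1,\dots,E_g$, each with $r_i=3$, and the divisorial valuation associated with $E_i$ has Puiseux pairs $(q_1,n_1),\dots,(q_i,n_i)$. Substituting these into \cref{lemma-valuations-3}, and using the identities $\overline{\beta}_i=e_i\overline{m}_i$ and $e_{i-1}=n_i e_i$, one obtains $N_i=n_i\overline{\beta}_i$ and $k_i=n_1\cdots n_i+m_i$. Hence, for any $\omega=g\,\dd x$ with $\nu=v_i(g)<N_i$,
\[
    \sigma_i(\omega)=-\frac{m_i+n_1\cdots n_i+\nu}{n_i\overline{\beta}_i}=\sigma_\nu.
\]
The achievable values of $\nu$ form the semigroup $\Gamma_i$ of the valuation $v_i$, which by \eqref{eq-char2semi} is generated by $n_1\cdots n_i,\ n_2\cdots n_i\overline{m}_1,\ \dots,\ n_i\overline{m}_{i-1},\ \overline{m}_i$, matching the expression in the statement.

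Second, I would translate the integrality hypothesis of \cref{thm-main-2}(ii) into the stated conditions $e_{i-1}\sigma_\nu,\overline{\beta}_i\sigma_\nu\notin\mathbb{Z}$. Because $r_i=3$, case (ii) requires all three residue numbers $\epsilon_{i_1}(\omega),\epsilon_{i_2}(\omega),\epsilon_{i_3}(\omega)$ to lie in $\mathbb{Q}\setminus\mathbb{Z}$. Combining the explicit formulas of \cref{prop-eps-rup} with the decorations supplied by \cref{lemma-valuations-2} and the linear identities of \cref{lemma-linear-valuations}, the fractional part of $\epsilon_{i_1}(\omega)$ has denominator dividing $n_i$ and vanishes precisely when $n_i\mid m_i+n_1\cdots n_i+\nu$, i.e.\ when $\overline{\beta}_i\sigma_\nu\in\mathbb{Z}$; analogously, $\epsilon_{i_2}(\omega)\in\mathbb{Z}$ is equivalent to $\overline{m}_i\mid m_i+n_1\cdots n_i+\nu$, i.e.\ to $e_{i-1}\sigma_\nu\in\mathbb{Z}$. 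Since $\gcd(n_i,\overline{m}_i)=1$, the sum rule $\epsilon_{i_1}(\omega)+\epsilon_{i_2}(\omega)+\epsilon_{i_3}(\omega)\in\mathbb{Z}$ supplied by \cref{lemma-sumeps} forces $\epsilon_{i_3}(\omega)\notin\mathbb{Z}$ automatically whenever the first two residue numbers are non-integer, so the third condition of case (ii) is free.

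Third, case (i) of \cref{thm-main-2} would produce roots of multiplicity $2$; but by L\^e's theorem \cite{Tra72}, the reduced Bernstein-Sato polynomial $\widetilde{b}_{f,x}(s)$ of an irreducible plane curve has only simple roots, so the only possible double root of $b_{f,x}(s)$ is $-1$. This value coincides with the contribution coming from the strict transform of $f$ (a reduced non-exceptional component with $N_i=1$) furnished by the last clause of \cref{thm-main-2}, and is therefore already accounted for in the union. Putting the two contributions together yields exactly the set in the statement. The main obstacle will be the bookkeeping in the second paragraph: one must verify that the integers $\alpha_j,\beta_j,N_{i_j},k_{i_j}$ produced by \cref{cor-valuations-3,lemma-linear-valuations,prop-eps-rup} telescope into the two clean divisibility conditions involving only $n_i$ and $\overline{m}_i$, and that the coprimality of $n_i$ and $\overline{m}_i$ indeed suffices to make the non-integrality of $\epsilon_{i_3}(\omega)$ automatic.
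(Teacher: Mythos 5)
Your proof is correct and follows the route the paper implies: specialize the numerical data from \cref{lemma-valuations-3} to the $g$ rupture divisors of the plane branch (obtaining $N_i = n_i\overline{\beta}_i$ and $k_i = m_i + n_1\cdots n_i$), rewrite the residue-number conditions of \cref{thm-main-2}(ii) as the divisibility constraints $\overline{\beta}_i\sigma_\nu, e_{i-1}\sigma_\nu \notin \mathbb{Z}$, and invoke L\^e's semi-simplicity theorem, exactly as the paper does in the remark preceding \cref{cor-thm-2}, to dispose of case (i). One point worth making explicit is that for an irreducible branch every rupture divisor $E_i$ has $\beta_1 = \beta_2 = 0$ in \cref{lemma-linear-valuations}, because the sole arrowhead lies in the direction of $E_{i_3}$; this is what collapses $\epsilon_{i_1}(\omega)$ and $\epsilon_{i_2}(\omega)$ in \cref{prop-eps-rup} to pure fractions with denominators $n_i$ and $\overline{m}_i$ and makes your two congruence computations go through, and since $\beta_1 = \beta_2 = 0$ is exactly condition $(\dag)$ by \cref{lemma-rup3}, it also yields, via \cref{cor2-prop-rup2} and \cref{lemma-rup3-2}, a purely combinatorial proof that $\epsilon_{i_j}(\omega) \neq 0$ for all $j$, i.e.\ that case (i) of \cref{thm-main-2} never applies at a rupture divisor of a branch — an alternative to the monodromy argument reaching the same conclusion.
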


\section{Relation with known roots}

\subsection{Jumping numbers and multiplier ideals} \label{sec-multipliers}

Let \(X\) be a complex algebraic variety. To any hypersurface singularity defined by \(f \in \mathscr{O}_{X}(X)\), or more generally any \(\mathbb{Q}\)-divisor, one can associate a family of ideal sheaves \(\mathcal{J}(f^\lambda)\), the multiplier ideals, parametrized by positive rational numbers \(\lambda \in \mathbb{Q}_{>0}\). These ideals form a nested sequence
\[
 \mathscr{O}_{X} \supsetneq \mathcal{J}(f^{\lambda_1}) \supsetneq \mathcal{J}(f^{\lambda_2}) \supsetneq \cdots \supsetneq \mathcal{J}(f^{\lambda_i}) \supsetneq \cdots
\]
and the rational numbers \(0 < \lambda_1 < \lambda_2 < \dots < \lambda_i < \dots \) where there is a strict inclusion are called the jumping numbers.

\jump

Let \(\pi: Y \longrightarrow X \) be a log resolution of the hypersurface defined by \(f\). Then, the multiplier ideals have the following birational definition
\[
    \mathcal{J}(f^\lambda) = \pi_* \mathscr{O}_Y (\lceil K_{\pi} - \lambda F_\pi \rceil),
\]
see \cite{Laz04-2}. For other characterizations of multiplier ideals see \cite{ELSV04,BS05,MP19}.

\jump

We are mainly interested in the case where \(X\) is a smooth complex surface and \(f \in \mathscr{O}_{X, x}\) is a germ of a holomorphic function at some point \(x \in X\). In this situation, we can make use of the theory of jumping divisors from \cite[\S 4]{AAD16}. A reduced divisor \(G \leqslant F_\pi\) is a jumping divisor for a jumping number \(\lambda \in \mathbb{Q}_{>0}\) if
\[
    \mathcal{J}(f^{\lambda - \epsilon}) = \pi_* \mathscr{O}_{Y}(\lceil K_\pi - \lambda F\rceil + G),
\]
for \(\epsilon \ll 1\). A jumping divisor is minimal if no proper subdivisor is a jumping divisor for \(\lambda\), i.e.
\[
    \mathcal{J}(f^{\lambda - \epsilon}) \supsetneq \pi_* \mathscr{O}_{Y}(\lceil K_\pi - \lambda F\rceil + G),
\]
for any \(0 \leqslant  G' < G\). The existence and uniqueness of a minimal jumping divisor \(G_\lambda\) for any jumping number \(\lambda \in \mathbb{Q}_{>0}\) follows from \cite[Prop. 4.4, Cor. 4.6]{AAD16}. The minimal jumping divisor is defined as the reduced divisor supported on the components \(E_i\) of \(F_\pi\) such that 
\[
    \lambda = \frac{k_i + v_i}{N_i},
\]
where the multiplicities \(v_i\) come from a certain antinef divisor \(D = \sum_{i \in T} v_i E_i \). 

\begin{proposition}[{\cite[Prop. 4.16, Lemma 4.14]{AAD16}}]
For any component \(E_i \leqslant G_\lambda\) of the minimal jumping divisor \(G_\lambda\) associated with a jumping number \(\lambda \in \mathbb{Q}_{>0}\) one has
\[
    (\lceil K_\pi - \lambda F_\pi \rceil + G_\lambda) \cdot E_i \geq 0.
\]
Moreover, 
\[
    (\lceil K_{\pi} - \lambda F_{\pi}\rceil + G_\lambda) \cdot E_i = -2 + \sum_{j = 1}^{r} \{\lambda N_{i_j} - (k_{i_j} - 1)\} + r_{G_\lambda}(E_i),
\]
where \(\{\cdot\}\) denotes the fractional part, and \(r_{G_\lambda}(E_i)\) is the number of divisors different from \(E_i\) in the support of \(G_\lambda\) and adjacent to \(E_i\).
\end{proposition}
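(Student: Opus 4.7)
My plan is to handle the identity by a direct intersection-theoretic computation on the smooth surface $Y$ and then extract the inequality from the minimality of $G_\lambda$ via a short exact sequence. Set $D = \lceil K_\pi - \lambda F_\pi \rceil + G_\lambda$, and for each component $E_k$ in the support of $F_\pi$ write $c_k = k_k - 1 - \lambda N_k$, so that $\lceil c_k \rceil - c_k = \{\lambda N_k - (k_k - 1)\} =: \epsilon_k \in [0,1)$. By the description of $G_\lambda$ through the antinef divisor $\sum v_k E_k$ with $\lambda N_i = k_i + v_i$, every component $E_i \leqslant G_\lambda$ satisfies $\epsilon_i = 0$. The adjunction formula on $Y$ gives $K_\pi \cdot E_i = -2 - E_i^2$, and $F_\pi \cdot E_i = 0$ since $F_\pi$ is a principal divisor (or, equivalently, by projection formula applied to $\pi$-exceptional $E_i$); thus $(K_\pi - \lambda F_\pi) \cdot E_i = -2 - E_i^2$. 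Adding the ceiling correction and using the SNC identity $E_{i_j} \cdot E_i = 1$ together with $\epsilon_i = 0$ yields
\[
    \lceil K_\pi - \lambda F_\pi \rceil \cdot E_i = -2 - E_i^2 + \sum_{j} \epsilon_{i_j},
\]
while the coefficient of $E_i$ in $G_\lambda$ being $1$ gives $G_\lambda \cdot E_i = E_i^2 + r_{G_\lambda}(E_i)$. Summing, the $E_i^2$ terms cancel and the stated formula drops out.

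For the inequality, apply the minimality of $G_\lambda$ with $G' = G_\lambda - E_i < G_\lambda$; by definition this forces the strict inclusion
\[
    \pi_* \mathscr{O}_Y(D - E_i) \subsetneq \pi_* \mathscr{O}_Y(D).
\]
Push forward the short exact sequence
\[
    0 \longrightarrow \mathscr{O}_Y(D - E_i) \longrightarrow \mathscr{O}_Y(D) \longrightarrow \mathscr{O}_{E_i}(D|_{E_i}) \longrightarrow 0,
\]
and use left exactness of $\pi_*$: the quotient $\pi_* \mathscr{O}_Y(D) / \pi_* \mathscr{O}_Y(D - E_i)$ embeds into $\pi_* \mathscr{O}_{E_i}(D|_{E_i}) = H^0(\mathbb{P}^1, \mathscr{O}(D \cdot E_i))$. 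Since the quotient is nonzero, so is the latter, which forces $D \cdot E_i \geq 0$.

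The main subtlety I anticipate is translating the global minimality of $G_\lambda$ — phrased through pushforwards of sheaves — into the local non-vanishing on the single component $E_i$; the exact sequence above is precisely this bridge, and it works because $E_i \cong \mathbb{P}^1$ is smooth rational, so global sections of a line bundle on it are controlled by a single integer, its degree. A secondary bookkeeping point is the treatment of neighbors $E_{i_j}$ belonging to the strict transform, where the convention $k_{i_j} = 1$ is used so that $\{\lambda N_{i_j} - (k_{i_j} - 1)\} = \{\lambda N_{i_j}\}$ is absorbed uniformly into the stated sum, giving a formula that does not need to distinguish exceptional from non-exceptional neighbors.
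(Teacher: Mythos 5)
The paper does not prove this proposition — it cites it directly from \cite[Prop.~4.16, Lemma~4.14]{AAD16} — so there is no in-paper proof to compare against. Your reconstruction is correct: the identity follows exactly as you compute, using adjunction on the smooth rational curve $E_i$, the vanishing $F_\pi\cdot E_i = 0$ because $F_\pi$ is a pullback and $E_i$ is $\pi$-exceptional, the observation that $\epsilon_i = 0$ for a component of $G_\lambda$ (since $\lambda N_i = k_i + v_i$ with $v_i \in \mathbb{Z}$), and the fact that $G_\lambda$ is reduced so $G_\lambda \cdot E_i = E_i^2 + r_{G_\lambda}(E_i)$. The inequality then follows cleanly from the strict containment forced by minimality applied to $G' = G_\lambda - E_i$, the left exactness of $\pi_*$ applied to the restriction sequence, and the fact that $H^0(\mathbb{P}^1, \mathscr{O}(d)) \neq 0$ forces $d \geq 0$. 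Your bookkeeping convention $k_{i_j} = 1$ for strict-transform neighbors is consistent with the paper's definition of $K_\pi$ as supported on the exceptional locus, so the final formula is uniform in the adjacent components. This is essentially the argument in \cite{AAD16} and is valid as written.
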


Using these results we can prove that jumping numbers in the interval \((0, 1)\) are described by \cref{thm-main-2}.

\begin{proposition} \label{prop-jumping}
    The opposites of the jumping numbers in \( (0, 1)\) are contained in the set of topological roots of the local Bernstein-Sato polynomial described by \cref{thm-main-2}.
\end{proposition}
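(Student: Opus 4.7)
The plan is to combine the theory of minimal jumping divisors from \cite{AAD16} with \cref{thm-main-2}. Given a jumping number $\lambda \in (0, 1)$, let $G_\lambda$ be its minimal jumping divisor and $D = \sum_k v_k E_k$ the associated antinef divisor satisfying $\lambda N_k = k_k + v_k$ for every $E_k \leqslant G_\lambda$. By Zariski's theory of complete ideals on $\mathbb{C}^2$, one can choose $g \in \mathscr{O}_{\mathbb{C}^2, x}$ generic in $\pi_* \mathscr{O}_Y(-D)$ so that $v_k(g) = v_k$ for every $k \in T$; then $\omega = g \, \dd x$ has $\sigma_i(\omega) = -\lambda$ for every $E_i \leqslant G_\lambda$ and $\epsilon_{i_j}(\omega) = k_{i_j} + v_{i_j} - \lambda N_{i_j}$.

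First I would dispose of the strict-transform case: if $E_i \leqslant G_\lambda$ lies in $T_s$, then $k_i = 1$ and $\lambda = (1 + v_i)/N_i$ with $1 \leq 1 + v_i \leq N_i - 1$, so $-\lambda \in \{-1/N_i, \ldots, -(N_i - 1)/N_i\}$ is a topological root by the last clause of \cref{thm-main-2}. To ensure one can always reduce to this case or to a rupture exceptional one, I would analyze the intersection inequality
\[
    0 \leq (\lceil K_\pi - \lambda F_\pi \rceil + G_\lambda) \cdot E_i = -2 + \sum_{j=1}^{r_i} \{\lambda N_{i_j} - (k_{i_j} - 1)\} + r_{G_\lambda}(E_i).
\]
Since every term $\{\lambda N_{i_j} - (k_{i_j} - 1)\}$ is strictly less than $1$, an exceptional $E_i \leqslant G_\lambda$ with $r_i = 1$ is impossible and $r_i = 2$ forces both neighbors into $G_\lambda$; hence each connected component of $G_\lambda$ reaches either a strict transform component or a rupture exceptional divisor.

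Suppose now that $E_i \leqslant G_\lambda$ is rupture exceptional. Let $s_i = r_{G_\lambda}(E_i)$ be the count of adjacent divisors in $G_\lambda$ (equivalently, the indices $j$ with $\epsilon_{i_j}(\omega) = 0$), and let $u_i$ be the count of $j$ with $\lambda N_{i_j} \notin \mathbb{Z}$ (equivalently, $\epsilon_{i_j}(\omega) \in \mathbb{Q} \setminus \mathbb{Z}$); these two subsets of $\{1, \ldots, r_i\}$ are disjoint. The inequality above, combined with $\{\lambda N_{i_j} - (k_{i_j} - 1)\} < 1$ whenever $\lambda N_{i_j} \notin \mathbb{Z}$, implies that either $s_i \geq 1$, or else $s_i = 0$ and $u_i \geq 3$. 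In the former case I would invoke \cref{thm-main-2}(i) to conclude that $-\lambda$ is a root; in the latter, \cref{thm-main-2}(ii) applies instead.

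The main obstacle is verifying that $\omega$, or a \cref{thm-main-1} modification of it, satisfies the hypotheses of \cref{thm-main-2}. Since any such modification keeps $\sigma_i$ fixed and changes each $v_{i_j}(g)$ by an integer, one has $\epsilon_{i_j}(\omega') \equiv \epsilon_{i_j}(\omega) \pmod{\mathbb{Z}}$, so the count $u_i$ is invariant. The invariance of vanishing residue numbers at $E_i$ under modifications satisfying \cref{thm-main-1} is recorded in the proof of \cref{prop-admissible-chains}, and together with the independence clause of \cref{thm-main-2} this transfers the $(s_i, u_i)$ analysis to any form satisfying \cref{thm-main-1}.
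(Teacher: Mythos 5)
Your argument is correct and follows the same route as the paper's proof: choose the antinef divisor $D$ realizing the minimal jumping divisor $G_\lambda$, take $g$ with $\textnormal{Div}(\pi^*g)$ matching $D$, and classify via the AAD16 intersection inequality. Two of your additions genuinely tighten the published argument: you explicitly dispose of the case $E_i \leqslant G_\lambda$ with $i \in T_s$ via the non-exceptional clause of \cref{thm-main-2} (the paper's proof tacitly assumes it can always land on a rupture \emph{exceptional} divisor, which the AAD16 adjunction formula is really for), and your $r_i = 1$/$r_i = 2$ analysis makes precise that every connected component of $G_\lambda$ reaches a strict transform or a rupture divisor. The one soft spot is the last paragraph: the invariance of vanishing residue numbers recorded in the proof of \cref{prop-admissible-chains} compares two forms \emph{both} already satisfying the bounds of \cref{thm-main-1}, whereas your $\omega$ built from $D$ may violate them. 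You should argue directly that the case discrimination transfers to a \cref{thm-main-1} modification $\omega'$: $u_i$ transfers because $\epsilon_{i_j} \bmod \mathbb{Z}$ is determined by $\sigma_i(\omega)$; and if some $\epsilon_{i_j}(\omega) = 0$, then $\epsilon_{i_j}(\omega') \in \mathbb{Z} \cap [-1, 1]$, where $+1$ is excluded by the arrowhead criterion of \cref{cor2-prop-rup2} (which $\epsilon_{i_j}(\omega) = 0$ forces to hold) and $-1$ is excluded because $(\dag)$ would contradict \cref{lemma-rup3-2} applied to $\omega$ itself, so $\epsilon_{i_j}(\omega') = 0$. Note also that even if $s_i$ were not preserved, it can only increase, and landing in case (i) instead of case (ii) still gives the claimed containment.
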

\begin{proof}
Since the divisor \(D\) defining the minimal jumping divisor \(G_\lambda\) is antinef, we can find \(g \in \mathscr{O}_{X, x}\) such that \(D = \textnormal{Div}(\pi^* g)\). Letting \(\omega = g \dd x\) we have that \( \lambda = -\sigma_i(\omega) \) for any \(E_i\) in the support of \(G_\lambda\). Since \(\lambda \in (0, 1)\), we have that \(v_i(g) < N_i\).

\jump

From the previous proposition, it follows that
\[
    (\lceil K_{\pi} - \lambda F_\pi \rceil + E_i) \cdot E_i = -2 - \sum_{j = 1}^r \{ \epsilon_{i_j}(\omega)\} + r_{G_\lambda}(E_i) \geq 0.
\]
If \(E_i\) is an isolated component in the support of \(G_\lambda\), then \(r_{G_\lambda}(E_i) = 0\) and there must exist at least three \(\epsilon_{i_j}(\omega) \in \mathbb{Q} \setminus \mathbb{Z} \). This implies that \(E_i\) is rupture and the second case of \cref{thm-main-2} is satisfied. Otherwise, there are at least two components \(E_i\), \(E_j\) in the support of \(G_\lambda\) intersecting each other and such that \(\sigma_i(\omega) = \sigma_j(\omega)\). Therefore, the first case of \cref{thm-main-2} follows. 
\end{proof}

\subsection{Poles of the motivic zeta function} \label{sec-motivic}

Let \(X\) be a complex algebraic variety and \(f: X \longrightarrow \mathbb{C}\) a non-constant regular function. For a non-negative integer \(n\) and a point \(x \in X\) consider
\[
    \mathcal{L}_n(X) = \textnormal{Hom}(\textnormal{Spec}\, \mathbb{C}\llbracket t \rrbracket / (t^n) , X), \qquad \mathcal{L}_{n, x}(X) = \{\gamma(t) \in \mathcal{L}_n(X) \ | \ \gamma(0) = x \},
\]
the arc space of order \(n\) on \(X\) and the arc space of order \(n\) on \(X\) centered at \(x\), respectively. The inverse limits \(\mathcal{L}(X)\) and \(\mathcal{L}_{x}(X)\) define the arc space on \(X\) and the arc space on \(X\) centered at \(x\).  Then, the \(n\)-th contact loci of \(f\) is defined as
\[
    \mathcal{X}_n = \{ \gamma(t) \in \mathcal{L}_n(X) \ | \ \textnormal{ord}_t (f\circ \gamma) = n \},
\]
and similarly for its local counterpart \(\mathcal{X}_{n, x}\) at \(x \in X\). If \(K_0(\textnormal{Var}_\mathbb{C})\) denotes the Grothendieck group of varieties over \(\mathbb{C}\) and \([X]\) denotes the class of a variety \(X\), the local motivic zeta function of \(f\) at \(x \in X\) is defined as the motivic integral 
\[
    Z_{\textnormal{mot}, x}(f; s) = \int_{\mathcal{L}_x(X)} \mathbb{L}^{-s\, \textnormal{ord}_t f} d \mu = \sum_{n \in \mathbb{Z}} [\mathcal{X}_{n, x}] \mathbb{L}^{-n s}, \qquad \textnormal{Re}(s) > 0,
\]
in the ring \(K_0(\textnormal{Var}_{\mathbb{C}})\llbracket\mathbb{L}^{-1}\rrbracket\). Motivic integration and the motivic zeta function are generalizations of \(p\)-adic integration and the \(p\)-adic Igusa zeta function since the former specialize, for almost all primes \(p\), to the latter. The reader is referred to the surveys \cite{Veys06,Loe09} for an overview of these theories and their applications to problems in number theory and birational geometry.

\begin{theorem}[Denef-Loeser \cite{DL98}] \label{thm-denef-loeser}
Let \(\pi : Y \longrightarrow X\) be a log resolution of \(\{f = 0\}\), then
\[
    Z_{\textnormal{mot}, x}(f; s) = \frac{1}{\mathbb{L}^n} \sum_{J \subseteq T} [E_J^\circ \cap \pi^{-1}(x)] \prod \frac{\mathbb{L} - 1}{\mathbb{L}^{N_i s + k_i} - 1}.
\]
In particular, this implies that \(Z_{\textnormal{mot}, x}(f;s) \) is a rational function in \(\mathbb{L}^{-1}\).
\end{theorem}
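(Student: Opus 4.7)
The plan is to derive the formula by applying the motivic change of variables to the log resolution \(\pi\) and then partitioning the arc space on \(Y\) according to the pattern of contact with the exceptional divisors. The starting point is the defining equality
\[
 Z_{\textnormal{mot}, x}(f;s) = \int_{\mathcal{L}_x(X)} \mathbb{L}^{-s\,\textnormal{ord}_t f}\, d\mu,
\]
together with the fact that \(\pi\) induces a bijection, up to subsets of measure zero, between \(\mathcal{L}_x(X)\) and the union of the arc spaces centered at points of \(\pi^{-1}(x)\) on \(Y\). Since \(\pi^{*}f\) and \(\det \textnormal{Jac}\,\pi\) have monomial SNC support with multiplicities \(N_i\) and \(k_i-1\), the motivic change of variables formula gives
\[
 Z_{\textnormal{mot},x}(f;s)=\int_{\pi^{-1}\mathcal{L}_x(X)} \mathbb{L}^{-\sum_{i\in T}(N_i s + k_i -1)\,\textnormal{ord}_t E_i - \textnormal{ord}_t E_i}\,d\mu,
\]
where the extra shift by \(1\) packages the Jacobian correction \(\mathbb{L}^{-\textnormal{ord}_t\,\mathrm{jac}\,\pi}\) together with the factor coming from the relative dimension.

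Next I would stratify the arc space on \(Y\) according to the incidence pattern of the arcs with \(E = \sum E_i\). For \(J\subseteq T\), let
\[
 \mathcal{L}_J = \{\gamma\in\mathcal{L}(Y)\ |\ \gamma(0)\in E_J^{\circ}\cap \pi^{-1}(x)\},
\]
so that \(\pi^{-1}\mathcal{L}_x(X)\) is, up to measure zero, the disjoint union \(\bigsqcup_{J\subseteq T} \mathcal{L}_J\). For an arc \(\gamma\in\mathcal{L}_J\) with \(\textnormal{ord}_t E_i(\gamma)=n_i\) (\(n_i\geq 1\) if \(i\in J\), \(n_i=0\) otherwise), the integrand is constant and equal to \(\prod_{i\in J}\mathbb{L}^{-(N_i s + k_i)n_i}\). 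The key geometric input is that, for each fixed tuple \((n_i)_{i\in J}\), the set of such arcs fibers, via truncation, over \(E_J^{\circ}\cap\pi^{-1}(x)\) with fibers isomorphic to \((\mathbb{C}^*)^{|J|}\times \mathbb{A}^{\textnormal{large}}\), yielding a motivic mass
\[
 [E_J^{\circ}\cap \pi^{-1}(x)]\,(\mathbb{L}-1)^{|J|}\,\mathbb{L}^{-n\,\sum_{i\in J} n_i -\,\textnormal{correction}},
\]
where the correction is absorbed into the normalization \(\mathbb{L}^{-n}\) coming from working with \(\mathcal{L}_x(X)\) rather than the full arc space.

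After plugging in the constant value of the integrand and summing the geometric series
\[
 \sum_{(n_i)_{i\in J}\in\mathbb{Z}_{>0}^{|J|}} \prod_{i\in J} \mathbb{L}^{-(N_i s+k_i)n_i} = \prod_{i\in J} \frac{\mathbb{L}^{-(N_i s+k_i)}}{1-\mathbb{L}^{-(N_i s+k_i)}} = \prod_{i\in J}\frac{1}{\mathbb{L}^{N_i s+k_i}-1},
\]
and combining with the \((\mathbb{L}-1)^{|J|}\) factor, one obtains exactly the claimed formula. Rationality in \(\mathbb{L}^{-s}\) follows from the finiteness of \(T\) and the explicit form of the denominators.

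The main technical obstacle in carrying this out is justifying the measurability of the strata \(\mathcal{L}_J\) and the validity of the change of variables formula in the motivic setting; this rests on Kontsevich's construction of motivic measure on truncated arc spaces and on the fact that the truncation of \(\mathcal{L}_J\) at sufficiently high level is a piecewise trivial fibration over \(E_J^{\circ}\cap\pi^{-1}(x)\) with affine fibers. Once the stratification and the fiber structure are in place, the rest of the argument is a geometric series computation.
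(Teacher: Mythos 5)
The paper does not prove this theorem but cites it from Denef and Loeser, so there is no in-paper argument to compare against; your sketch follows the standard motivic-integration approach (change of variables along \(\pi\), stratification of arcs on \(Y\) by the contact pattern \(J\subseteq T\) and contact orders \((n_i)_{i\in J}\), and a geometric-series summation), which is indeed the correct route.

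Your exponent bookkeeping, however, is internally inconsistent. The change of variables formula for the \emph{birational} morphism \(\pi\) produces the integrand \(\mathbb{L}^{-s\,\textnormal{ord}_t(\pi^{*}f)-\textnormal{ord}_t K_{\pi}}=\mathbb{L}^{-\sum_{i}(N_is+k_i-1)n_i}\); there is no additional shift ``coming from the relative dimension,'' since \(\pi\) is birational of relative dimension zero. The factor promoting \(k_i-1\) to \(k_i\) arises instead from the \emph{measure} of the stratum: arcs \(\gamma\) with \(\gamma(0)\in E_J^{\circ}\cap\pi^{-1}(x)\) and \(\textnormal{ord}_t E_i(\gamma)=n_i\) have the first \(n_i\) coefficients of each local equation of \(E_i\) vanishing and the \(n_i\)-th lying in \(\mathbb{C}^{*}\), so the measure of the stratum is \([E_J^{\circ}\cap\pi^{-1}(x)](\mathbb{L}-1)^{|J|}\mathbb{L}^{-\sum_{i\in J}n_i-n}\). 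Your stated mass \(\mathbb{L}^{-n\sum n_i-\textnormal{correction}}\) has the wrong exponent (it should be \(-\sum_{i\in J}n_i-n\), not \(-n\sum n_i\)), and no constant ``correction'' absorbed into \(\mathbb{L}^{-n}\) can cancel an \(n_i\)-dependent factor. Once the integrand \(\mathbb{L}^{-\sum(N_is+k_i-1)n_i}\) and the measure factor \((\mathbb{L}-1)^{|J|}\mathbb{L}^{-\sum n_i-n}\) are combined correctly, each summand becomes \(\mathbb{L}^{-n}(\mathbb{L}-1)^{|J|}\prod_{i\in J}\mathbb{L}^{-(N_is+k_i)n_i}\), which matches your geometric-series step and yields the stated formula.
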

A similar expression holds for the global version \(Z_{\textnormal{mot}}(f; s)\). From \cref{thm-denef-loeser} we see that the candidate poles for the local motivic zeta functions are \(\sigma_i = -k_i/N_i, i \in T\). Since the ring \(K_0(\textnormal{Var}_{\mathbb{C}})\) is not a domain some care must be taken to define the order of a pole, see \cite[Remark 3.7]{NX16} for the precise definition.

\begin{conjecture}[Strong Monodromy Conjecture]
Let \(f : X \longrightarrow \mathbb{C}\) a non-constant regular function and \(x \in X\), then
\begin{enumerate}[label=\roman*)]
\item If \(s_0\) is a pole of \(Z_{\textnormal{mot}, x}(f; s)\), then \(s_0\) is a root of \(b_{f, x}(s)\).
\item The order of a pole of \(Z_{\textnormal{mot}, x}(f;s)\) is at most its multiplicity as a root of \(b_{f, x}(s)\).
\end{enumerate}
\end{conjecture}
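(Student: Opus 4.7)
My plan is to prove the Strong Monodromy Conjecture only in its plane curve form ($n = 2$), which is the scope of the paper. Both parts will follow at once from \cref{main-prop} stated in the introduction: if every pole of $Z_{\textnormal{mot}, x}(f;s)$, counted with its order, belongs to the set of topological roots of $b_{f,x}(s)$ produced by \cref{thm-main-2}, then part (i) recovers Loeser's theorem for plane curves, while part (ii) is the new contribution. \cref{prop-jumping} has already established the analogous inclusion on the side of jumping numbers, so it remains to carry out the same argument for the motivic poles.

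The first step is to analyze the candidate poles $-k_i/N_i$, $i \in T$, via the Denef-Loeser formula of \cref{thm-denef-loeser}. Taking the standard volume form $\omega = dx$ gives $\sigma_i(\omega) = -k_i/N_i$ and makes the residue numbers $\epsilon_{i_j}(dx)$ coincide with the combinatorial quantities that control cancellation in the Denef-Loeser expression. A direct computation shows that a double pole of $Z_{\textnormal{mot}, x}$ at $s_0$ forces the existence of two adjacent divisors $E_i, E_j \in \textnormal{Supp}(F_\pi)$ with $\sigma_i = \sigma_j = s_0$, equivalently $\epsilon_{i_j}(dx) = 0$; a simple pole at $s_0$ corresponds either to a rupture exceptional divisor whose residue configuration prevents the corresponding summand from being regular, or to a non-exceptional component.

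The second step is to feed these combinatorial data into \cref{thm-main-2}. For a double pole, $\epsilon_{i_j}(dx) = 0$ is exactly the hypothesis of \cref{prop-admissible-chains}, which upgrades $dx$ to a form $\omega'$ with the same $\sigma_i$ whose graph $\mathcal{M}(\omega')$ contains a maximal admissible chain; case (i) of \cref{thm-main-2} then yields multiplicity $2$. For a simple pole coming from a rupture divisor, the non-cancellation condition translates, after using \cref{thm-main-1} to choose a representative, into the existence of at least three non-integer residue numbers with no vanishing one, which is exactly case (ii) and gives multiplicity $1$. Simple poles coming from a non-exceptional divisor $E_i$ of multiplicity $N_i$ are absorbed by the final sentence of \cref{thm-main-2}, since $-k_i/N_i$ lies among $-1/N_i, \dots, -1$.

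The principal obstacle I anticipate is the multiplicity-two case when $f$ is not reduced, which is precisely the gap in Loeser's argument flagged by Veys: for non-isolated singularities the minimal polynomial of the local monodromy at $x$ need not carry a Jordan block of size $\geq 2$ at the eigenvalue $e^{2\pi\imath s_0}$, so one cannot always construct a vanishing cycle in $H_1(X_t, \mathbb{C})$ whose period develops a logarithmic term. My strategy bypasses the monodromy entirely by using the relative cycles in $H_1^{lf}(X_t \setminus \partial X_t, \mathbb{C})$ built in \cref{sec-log-terms-plane}: the explicit cycle produced from a maximal admissible chain is shown in \cref{prop-log-cycles} to have a period with non-zero coefficient in front of $t^{-s_0 - 1} \ln t$, and the integration-by-parts argument of \cref{prop-bfct}, which is stated for arbitrary classes in locally finite homology, then certifies that $s_0$ is a root of $b_{f,x}(s)$ of multiplicity at least $2$. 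Verifying that the boundary contributions in \eqref{eq-prop-bfct} do not swamp the leading asymptotic, and that distinct local contributions along the admissible chain do not cancel (the content of the last part of the proof of \cref{prop-log-cycles}), are the two places where the plane curve geometry is genuinely used.
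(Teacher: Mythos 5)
Your proposal is correct and follows essentially the same route as the paper: restrict to $n=2$, invoke Veys' characterization of the actual poles of $Z_{\textnormal{mot},x}(f;s)$ and their orders (\cref{thm-veys}), then plug the resulting combinatorial data into \cref{thm-main-2} with $\omega = \dd x$, which is exactly what the proof of \cref{main-prop} does. One small caveat: what you call a ``direct computation'' in the first step is really the content of Veys' non-cancellation theorem (\cite[Thm.~3.2]{Veys90}), which the paper cites rather than re-derives; the rest of your discussion correctly locates the genuinely new ingredient in the use of relative cycles in $H_1^{lf}(X_t\setminus\partial X_t,\mathbb{C})$ (\cref{prop-log-cycles}, \cref{prop-bfct}) to detect multiplicity~two without appealing to the Jordan structure of the local monodromy.
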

One can state a similar conjecture for the global motivic zeta function \(Z_{\textnormal{mot}}(f;s)\) and the Bernstein-Sato polynomial \(b_f(s)\), and the local conjecture implies the global. There is also a weak version of this conjecture predicting a connection between the poles of \(Z_{\textnormal{mot}, x}(f; s)\) and the eigenvalues of the local monodromy on the cohomology of the Milnor fibers at some point of \(\{f = 0\}\) near \(x \in X\). By results of Malgrange \cite{Mal83} and Kashiwara \cite{Kas83}, the strong version implies the weak. Finally, one can announce analogous conjectures for the \(p\)-adic Igusa zeta function or the topological zeta function.

\jump

Focusing now in the case where \(f : (\mathbb{C}^2, x) \longrightarrow (\mathbb{C}, 0)\), the actual poles of \(p\)-adic Igusa zeta function were determined in \cite{Veys90}, however, the proof generalizes directly to the motivic setting.

\begin{theorem}[{\cite[Thm. 3.2]{Veys90}}] \label{thm-veys}
Let \(s_0 \in \{-k_i/N_i\ |\ i\in T_e\}\). Then, \(s_0\) is a pole of \(Z_{\textnormal{mot}, x}(f;s)\) if and only if there is a rupture divisor \(E_i\) such that \(s_0 = -k_i/N_i\). Moreover,
\begin{enumerate}[label=\roman*)]
\item \(s_0\) is a pole of order 2 if and only if at least one intersection \(E_{i} \cap E_{j} \neq \emptyset \) occurs such that \(k_i/N_i = k_j/N_j\).
\item \(s_0\) is a pole of order 1 if and only if no such intersection occurs and there is at least one \(E_i\) rupture such that \(s_0 = -k_i/N_i\).
\end{enumerate}
\end{theorem}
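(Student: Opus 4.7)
The starting point is the Denef--Loeser formula of \cref{thm-denef-loeser}. On a smooth surface, $E_J^\circ$ is empty whenever $|J| \ge 3$, so only subsets $J$ with $|J| \in \{0,1,2\}$ contribute. Each factor $(\mathbb{L}-1)/(\mathbb{L}^{N_i s + k_i}-1)$ formally has a simple pole at $s = -k_i/N_i$, so the total pole order of $Z_{\textnormal{mot},x}(f;s)$ at any candidate value $s_0 \in \{-k_i/N_i\ |\ i \in T_e\}$ is at most $2$, and it is attained exactly by the maximal cardinality of a $J$ for which every $i \in J$ satisfies $-k_i/N_i = s_0$ and $E_J^\circ \cap \pi^{-1}(x) \ne \emptyset$.

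Statement (i) is then almost immediate. A double pole at $s_0$ appears if and only if some pair $J=\{i,j\}$ realises both $-k_i/N_i = -k_j/N_j = s_0$ together with $E_i \cap E_j \cap \pi^{-1}(x) \ne \emptyset$, in which case the leading Laurent coefficient is a nonzero rational multiple of the class of that (finite, nonempty) intersection; no cancellation with the singleton or empty-set contributions is possible since they occur only at lower pole order.

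For the main equivalence and (ii), fix $s_0$ and consider the subgraph $\Delta_\pi(s_0) \subseteq \Delta_\pi$ spanned by those components $E_i$ with $-k_i/N_i = s_0$. The coefficient of $(s-s_0)^{-1}$ expands, using $[E_i^\circ \cap \pi^{-1}(x)] = [E_i \cap \pi^{-1}(x)] - \sum_{j}[E_i \cap E_j \cap \pi^{-1}(x)]$ and $E_i \cong \mathbb{P}^1$ for exceptional components, into a sum over the connected components of $\Delta_\pi(s_0)$. Along each component the contributions of valency-$2$ interior vertices telescope against the pair-contributions of their two neighbours inside $\Delta_\pi(s_0)$. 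What remains is supported on the ``boundary'' vertices: rupture divisors, dead-ends, and arrowhead attachments. Via the edge-determinant identity \eqref{eq-edgedet} together with \cref{prop-calculus}, dead-end and arrowhead boundary terms exactly cancel against the remaining pair terms attached to them, leaving a residue that vanishes unless at least one rupture divisor sits inside $\Delta_\pi(s_0)$. At such a rupture $E_r$, the class $[E_r^\circ \cap \pi^{-1}(x)] = \mathbb{L}+1-r_r$ with $r_r \ge 3$ is not annihilated by any of the corrections and produces a genuine simple pole, which gives (ii).

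The main technical obstacle lies in the boundary-cancellation bookkeeping of the telescoping argument. The key local input I would use is the numerical identity $k_i N_j - k_j N_i = \pm 1$ along any edge $E_i E_j$ of $\Delta_\pi$, a consequence of the edge-determinant rule together with \cref{prop-calculus}, which precisely controls the mutual residues of neighbouring factors in the Denef--Loeser product. Combined with the fact that in the \emph{minimal} resolution dead-ends and arrowheads never share their candidate value $-k/N$ with an adjacent divisor, this makes the cancellation at each endpoint type a finite, explicit verification and closes the argument.
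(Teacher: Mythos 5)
The paper does not prove this statement: it is imported verbatim from \cite[Thm.~3.2]{Veys90}, with the text only remarking that Veys' $p$-adic proof ``generalizes directly to the motivic setting.'' So there is no proof in the paper for your argument to be compared against, and your proposal should be evaluated on its own. The general shape of your plan—bound the pole order by the cardinality $|J|\le 2$ of nonempty strata in the Denef--Loeser formula, then run a residue-cancellation argument along valency-two chains, leaving nonzero contributions only at rupture vertices—is indeed the kind of argument Veys and Loeser use.

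However, the ``key local input'' you lean on to close the cancellation, the identity $k_i N_j - k_j N_i = \pm 1$ along any edge of $\Delta_\pi$, is false. For the cusp $f = y^2 - x^3$, the minimal log resolution has numerical data $(N_0,k_0)=(2,2)$, $(N_1,k_1)=(3,3)$, $(N_2,k_2)=(6,5)$, with $E_0$ and $E_1$ both adjacent to the rupture divisor $E_2$; one computes $k_0 N_2 - k_2 N_0 = 2$ and $k_1 N_2 - k_2 N_1 = 3$. The edge determinant rule \eqref{eq-edgedet} constrains the \emph{diagram decorations} $\alpha,\beta$, not the pairs $(N_i,k_i)$, and combining it with \cref{prop-calculus} does not yield your unimodularity claim. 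What actually controls neighbouring residues in Veys/Loeser is the bound $-1\le \epsilon_{i_j}(\mathrm{d}x)\le 1$ on the residue numbers $\epsilon_{i_j}(\mathrm{d}x)=(k_{i_j}N_i - N_{i_j}k_i)/N_i$ (Loeser's Prop.~II.3.1, generalized here as \cref{thm-main-1}), together with the sum relation of \cref{lemma-sumeps}; these give an inequality, not an exact unit determinant, and the telescoping bookkeeping must be organized around them. Without the correct arithmetic input the cancellation step does not close. A secondary gap: your sketch never isolates $s_0=-1$. When $f$ is reduced the arrowheads carry $k=N=1$ and the $J=\{i,\text{arrowhead}\}$ strata always produce a pole there, and $-1$ can also equal $-k_i/N_i$ for dead-end exceptional divisors (again $E_0,E_1$ in the cusp serve as an example), so this candidate requires a separate treatment, as it receives in Veys' original paper.
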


\begin{proof}[Proof of \cref{main-prop}]
The claim about the opposites of the jumping numbers follows directly from \cref{prop-jumping}. The claim about the poles of \(Z_{\textnormal{mot}, x}(f;s)\) is a consequence of \cref{thm-veys} and \cref{thm-main-2} applied to the standard volume form \(\omega = \dd x\).
\end{proof}

In particular, \cref{main-prop} proves both parts of the Strong Monodromy Conjecture in dimension \(2\). The proof of the first part is the same as Loeser's proof in \cite{Loe88}. However, as discussed in the introduction the proof in \cite{Loe88} does not completely cover the second part of the Conjecture when \(f\) is not reduced.

\section{Some families of examples} \label{examples}

We describe next some families of examples of topological classes where the roots described by \cref{thm-main-2} are all the topological roots of \( b_{f, x}(s) \) for all curves \( (f^{-1}(0), x) \) in the topological class.

\begin{example} \label{ex100}
    Consider the topological class of the plane branch \( f = (y^2-x^3)^2-x^{s+2}y\), with \( s \geq 3 \). The semigroup of this branch is \(\Gamma = \langle 4, 6, 2s + 7 \rangle\). The divisors associated with the minimal log resolution are
    \[
        F_{\pi, exc} = 4E_{0} + 6E_1 + 12E_2 + \cdots + (4s + 14)E_{s+1}, \quad K_\pi = E_0 + 2E_1 + 4E_2 + \cdots + (2s + 5)E_{s+1}.
    \]
    The rupture divisors are \( E_2 \) and \( E_{s+1} \), and the associated valuations have semigroups \( \Gamma_2 = \langle 2, 3 \rangle  \) and \( \Gamma_{s+1} = \Gamma \). Then, the topological roots of geometric origin given by \cref{thm-main-2} are
    \begin{displaymath}
        \begin{gathered}
            E_2 : -\frac{5}{12}, -\frac{7}{12}, -\frac{11}{12}, -\frac{13}{12},\\
            E_{s+1} :  \left\{ \sigma_{s+1, \nu} = - \frac{2s + 5 + \nu}{4s + 14} \ \bigg| \ \nu \in \Gamma, 0 \leq \nu < 4s + 14,\, (2s + 6)\sigma_{s+1, \nu}, (2s + 7)\sigma_{s+1, \nu} \not\in \mathbb{Z} \right\},
        \end{gathered}
    \end{displaymath}
    and \( -1 \). One can check that these are, in fact, all roots of \( b_{f, x}(s) \). Since the singularity is isolated, the number or roots of \(b_{f,x}(s)\) is bounded by \(\mu + 1\), where \(\mu\) is the Milnor number of \(f\). For this family of examples, \( \mu = 2s + 10 \), and the number of topological roots coming from \(E_{s+1}\) is \(2s + 6\).

    \jump

    The reason for this phenomenon is that, up to analytical equivalence, there is only one curve in each of these topological classes; see \cite[\S II.1]{TeiApp86}. In contrast, the poles of the motivic zeta function are \( -\frac{5}{12}, -\frac{2s + 5}{4s + 14} \), and the jumping numbers in \( (0, 1) \) are \( \frac{5}{12}, \frac{11}{12}, \frac{2s + 9}{4s + 14}, \dots \), for a total of \(\mu/2 = s + 5\).
\end{example}

On the other hand, there are examples of topological classes of plane branches such that any root of \(b_{f,x}(s)\) not described by \cref{thm-main-2} changes within the topological class.

\begin{example} \label{ex101}
    Let \(\Gamma = \langle n, m \rangle\), \(\gcd(n, m) = 1\), be the semigroup of a plane branch with one characteristic exponent. Consider the \(\mu\)-constant family
    \begin{equation} \label{eq-ex101}
        f_{t} = y^n - x^m + \sum_{\substack{ni + mj > nm, \\0 \leq i < m -1, \\ 0 \leq j < n -1}} t_{i, j} x^i y^j.
    \end{equation}
    The topological roots of geometric origin given by \cref{thm-main-2} are
    \[
        S_{top} = \left\{ \sigma_{i, j} = -\frac{ni + mj}{nm} \ \bigg| \ ni + mj < nm + n + m,\quad n\sigma_{i,j}, m\sigma_{i,j} \not\in \mathbb{Z} \right\},
    \]
    and \(-1\). For the quasi-homogeneous curve \(f_0 = y^n - x^m\) it is well-known that
    \[
        \widetilde{b}^{-1}_{f_0, x}(0) = \left\{ -\frac{ni + mj}{nm}\ \bigg|\ 1 \leq i \leq m - 1, 1 \leq j \leq n - 1 \right\}.
    \]
    The roots of \(\widetilde{b}_{f_0, x}(s) \) that are not in \(S_{top}\) are in bijection with the terms \(t_{i,j} x^i y^j \) in \eqref{eq-ex101}. It can be shown, see \cite{CN87}, that if \(\sigma_{i,j} \not\in S_{top}\) is a root of \(\widetilde{b}_{f_0,x}(s)\) then, \(\sigma_{i, j} - 1\) is a root of \(f_t\) if the corresponding parameter \(t_{i, j} \) is different from zero.

    \jump

    In this example, the \(\mu\)-constant family \eqref{eq-ex101} induces the miniversal deformation of the quasi-homogeneous curve \(f_0 = y^n - x^m\). Moreover, this \(\mu\)-constant family covers the whole topological class since, for this particular case, any plane branch in the topological class is analytically equivalent to some fiber of this deformation, see \cite[\S VI]{Zar86}.
\end{example}

It is then reasonable to ask the following question.

\begin{intro-question} \label[question]{the-question}
    For any topological class \(\mathcal{T}\) of plane curves, are the topological roots of geometric origin from \cref{thm-main-2} all the possible topological roots of \(\mathcal{T}\)? More precisely, if for any \((f^{-1}(0), x)\) in \(\mathcal{T}\) the local Bernstein-Sato polynomial has a root \(\sigma\) that is not a topological root of geometric origin, does there exist \((g^{-1}(0), x)\) in \(\mathcal{T}\) such that either \(\sigma - 1\) or \(\sigma + 1\) is a root of \(b_{g,x}(s)\)?
\end{intro-question}

In addition to the examples above, \cref{the-question} is true for more general classes of plane branches. Assuming that the eigenvalues of the monodromy are pair-wise different, for \(g = 2\) a positive answer is given in \cite{ACLM18}. 


\begin{thebibliography}{ACNLMH18}

\bibitem[AC{\`A}DC16]{AAD16}
M.~Alberich-Carramiñana, J.~{{\`A}lvarez Montaner}, and F.~Dachs-Cadefau, \emph{Multiplier ideals in two-dimensional local rings with rational singularities}, Michigan Math. J. \textbf{65} (2016), no.~2, 287--320.

\bibitem[ACNLMH17]{ACLM17b}
E.~{Artal Bartolo}, Pi. Cassou-Noguès, I.~Luengo, and A.~Melle-Hernández, \emph{Bernstein polynomial of 2-{P}uiseux pairs irreducible plane curve singularities}, Methods Appl. Anal. \textbf{24} (2017), no.~2, 185--214.

\bibitem[ACNLMH18]{ACLM18}
\bysame, \emph{On the $b$-exponents of generic isolated plane curve singularities}, J. Singul. \textbf{18} (2018), 36--49.

\bibitem[Ber72]{Ber72}
J.~Bernstein, \emph{The analytic continuation of generalized functions with respect to a parameter}, Funct. Anal. Appl. \textbf{6} (1972), no.~4, 26--40.

\bibitem[Bjo74]{Bjo74}
J.-E. Bjork, \emph{Dimensions of modules over algebras of differential operators}, Fonctions analytiques de plusieurs variables et analyse complexe (Paris), Agora Mathematica, no.~1, Gauthier-Villars, 1974, pp.~6--11.

\bibitem[Bla19]{Bla19}
G.~Blanco, \emph{Poles of the complex zeta function of a plane curve}, Adv. Math. \textbf{350} (2019), no.~9, 396--439.

\bibitem[Bla21]{Bla21}
\bysame, \emph{Yano's conjecture}, Invent. Math. \textbf{226} (2021), 421--465.

\bibitem[Bra28]{Bra28}
K.~Brauner, \emph{Zur {G}eometrie der {F}unktionen zweier komplexer {V}eräderlicher. {II}. {D}as {V}erhalten der {F}unktionen in der {U}mgebung ihrer {V}erzweigungsstellen}, Abh. Math. Semin. Univ. Hambg. \textbf{6} (1928), no.~1, 1--55.

\bibitem[BS05]{BS05}
N.~Budur and M.~Saito, \emph{Multiplier ideals, {$V$}-filtration, and spectrum}, J. Algebraic Geom. \textbf{14} (2005), no.~2, 269--282.

\bibitem[CA00]{Cas00}
E.~Casas-Alvero, \emph{Singularities of plane curves}, London Math. Soc. Lecture Note Ser., no. 276, Cambridge Univ. Press, Cambridge, 2000.

\bibitem[CN87]{CN87}
Pi. Cassou-Noguès, \emph{Étude du comportement du pôlynome de {B}ernstein lors d'une déformation à {$\mu$} constant de {$X^a + Y^b$} avec {$(a, b) = 1$}}, Compos. Math. \textbf{63} (1987), no.~3, 291--313.

\bibitem[DL92]{DL92}
J.~Denef and F.~Loeser, \emph{Caractéristiques d'{E}uler-{P}oincaré, fonctions zêta locales et modifications analytiques}, J. Amer. Math. Soc. \textbf{5} (1992), no.~4, 705--720.

\bibitem[DL98]{DL98}
\bysame, \emph{Motivic {I}gusa zeta functions}, J. Algebraic Geom. \textbf{7} (1998), no.~3, 505--537.

\bibitem[DM86]{DM86}
P.~Deligne and G.~D. Mostow, \emph{Monodromy of hypergeometric functions and nonlattice integral monodromy}, Inst. Hautes Études Sci. Publ. Math. \textbf{63} (1986), 5--89.

\bibitem[ELSV04]{ELSV04}
L.~Ein, R.~Lazarsfeld, K.~E. Smith, and D.~Varolin, \emph{Jumping coefficients of multiplier ideals}, Duke Math. J. \textbf{123} (2004), no.~3, 469--506.

\bibitem[EN85]{EN85}
D.~Eisenbud and W.~Neumann, \emph{Three-dimensional link theory and invariants of plane curve singularities}, Ann. of Math. Stud., no. 110, Princeton Univ. Press, Princeton, N.J., 1985.

\bibitem[Igu88]{Igu88}
J.{-i}. Igusa, \emph{{$B$}-functions and {$p$}-adic integrals}, Algebraic analysis (M.~Kashiwara and T.~Kawai, eds.), vol.~1, Academic Press, Boston, MA, 1988, pp.~231--241.

\bibitem[Kas76]{Kas76}
M.~Kashiwara, \emph{${B}$-functions and holonomic systems}, Invent. Math. \textbf{38} (1976), 33--53.

\bibitem[Kas83]{Kas83}
\bysame, \emph{Vanishing cycle sheaves and holonomic systems of differential equations}, Algebraic geometry (Tokyo/Kyoto, 1982), Lecture Notes in Math., no. 1016, Springer, Berlin, 1983, pp.~134--142.

\bibitem[Laz04]{Laz04-2}
R.~Lazarsfeld, \emph{Positivity in algebraic geometry. {II}. {P}ositivity for vector bundles, and multiplier ideals}, Ergeb. Math. Grenzgeb. (3), no.~49, Springer, Berlin, 2004.

\bibitem[Lic85]{Lich85}
B.~Lichtin, \emph{Some algebro-geometric formulae for poles of {$|f(x, y)|^s$}}, Amer. J. Math. \textbf{107} (1985), no.~1, 139--162.

\bibitem[Lic89]{Lich89}
\bysame, \emph{Poles of {$|f(z,w)|^{2s}$} and roots of the {$B$}-function}, Ark. Mat. \textbf{27} (1989), no.~1-2, 283--304.

\bibitem[Loe88]{Loe88}
F.~Loeser, \emph{Fonctions d'{I}gusa {$p$}-adiques et polynômes de {B}ernstein}, Amer. J. Math. \textbf{110} (1988), no.~1, 1--21.

\bibitem[Loe90]{Loe90}
\bysame, \emph{Fonctions d'{I}gusa {$p$}-adiques, polynômes de {B}ernstein, et polyèdres de {N}ewton}, J. Reine Angew. Math. \textbf{412} (1990), 75--96.

\bibitem[Loe09]{Loe09}
\bysame, \emph{Seattle lectures on motivic integration}, Algebraic geometry -- Seattle 2005. Part 2 (D.~Abramovich, A.~Bertram, L.~Katzarkov, R.~Pandharipande, and M.~Thaddeus, eds.), Proc. Sympos. Pure Math., vol.~80, Amer. Math. Soc., Providence, RI, 2009, pp.~745--784.

\bibitem[Mal74a]{Mal74a}
B.~Malgrange, \emph{Intégrales asymptotiques et monodromie}, Ann. Sci. École Norm. Sup. (4) \textbf{7} (1974), no.~3, 405--430.

\bibitem[Mal74b]{Mal74b}
\bysame, \emph{Sur les polynômes de {I}. {N}. {B}ernstein}, Russian Math. Surveys \textbf{29} (1974), no.~4, 81--88.

\bibitem[Mal83]{Mal83}
\bysame, \emph{Polynôme de {B}ernstein-{S}ato et cohomologie évanescente}, Analysis and topology on singular spaces ({II}-{III}) (Luminy), Astérisque, vol. 101-102, Soc. Math. France, Paris, 1983, pp.~243--267.

\bibitem[Meu83]{Meu83}
D.~Meuser, \emph{On the poles of a local zeta function for curves}, Invent. Math. (1983), 445--465.

\bibitem[MHTV09]{MHTV09}
A.~Melle-Hernández, T.~Torrelli, and W.~Veys, \emph{On `maximal' poles of zeta functions, roots of {$b$}-functions, and monodromy {J}ordan blocks}, J. Topol. \textbf{2} (2009), no.~3, 517--526.

\bibitem[MHTV10]{MHTV10}
\bysame, \emph{Monodromy {J}ordan blocks, {$b$}-functions and poles of the zeta functions for germs of plane curves}, J. Algebra \textbf{324} (2010), no.~6, 1364--1382.

\bibitem[Mil68]{Mil68}
J.~Milnor, \emph{Singular points of complex hypersurfaces}, Ann. of Math. Stud., no.~61, Princeton Univ. Press, Princeton, N.J., 1968.

\bibitem[MP19]{MP19}
M.~Musta\c{t}\v{a} and M.~Popa, \emph{Hodge ideals}, Mem. Amer. Math. Soc. \textbf{262} (2019), no.~1268.

\bibitem[NV12]{NV12}
A.~N\'emethi and W.~Veys, \emph{Generalized monodromy conjecture in dimension two}, Geom. Topol. \textbf{16} (2012), no.~1, 155--217.

\bibitem[NX16]{NX16}
J.~Nicaisse and C.~Xu, \emph{Poles of maximal order of motivic zeta functions}, Duke Math. J. \textbf{165} (2016), no.~2, 217--243.

\bibitem[Pha11]{Pham11}
F.~Pham, \emph{Singularities of integrals}, Universitext, Springer, London, 2011.

\bibitem[Sai94]{Sai94}
M.~Saito, \emph{On microlocal {$b$}-function}, Bull. Soc. Math. France \textbf{122} (1994), 163--184.

\bibitem[Ste77]{Steen76}
J.~H.~M. Steenbrink, \emph{Mixed {H}odge structure on the vanishing cohomology}, Real and complex singularities (Alphen aan den Rijn), Proc. Ninth Nordic Summer School/NAVF Sympos. Math., Sijthoff and Noordhoff, 1977, pp.~525--563.

\bibitem[Tei86]{TeiApp86}
B.~Teissier, \emph{Appendix, in \cite{Zar86}}, 1986.

\bibitem[Tr{\'a}72]{Tra72}
L{\^e}~D{\~u}ng Tr{\'a}ng, \emph{Sur les noeuds algébriques}, Compos. Math. \textbf{25} (1972), no.~3, 281--321.

\bibitem[Var80]{Var80}
A.~N. Varchenko, \emph{{G}auss-{M}anin connection of isolated singular point and {B}ernstein polynomial}, Bull. Sci. Math. (2) \textbf{104} (1980), 205--223.

\bibitem[Var82]{Var82a}
\bysame, \emph{Asymptotic {H}odge structure in the vanishing cohomology}, Math. USSR-Izv. \textbf{18} (1982), no.~3, 469--512.

\bibitem[Vey90]{Veys90}
W.~Veys, \emph{On the poles of {I}gusa's local zeta function for curves}, J. Lond. Math. Soc. (2) \textbf{41} (1990), no.~1, 27--32.

\bibitem[Vey91]{Vey91}
\bysame, \emph{Relations between numerical data of an embedded resolution}, Amer. J. Math. \textbf{113} (1991), no.~4, 573--592.

\bibitem[Vey95]{Vey95}
\bysame, \emph{Determination of the poles of the topological zeta function for curves}, Manuscripta Math. \textbf{87} (1995), no.~4, 435--448.

\bibitem[Vey06]{Veys06}
\bysame, \emph{Arc spaces, motivic integration and stringy invariants}, Singularity theory and its applications (Shyuichi Izumiya, Goo Ishikawa, Hiroo Tokunaga, Ichiro Shimada, and Takasi Sano, eds.), Adv. Stud. Pure Math., vol.~43, Math. Soc. Japan, Tokyo, 2006, pp.~529--572.

\bibitem[Yan78]{Yano78}
T.~Yano, \emph{On the theory of {$b$}-functions}, Publ. Res. Inst. Math. Sci. \textbf{14} (1978), no.~1, 111--202.

\bibitem[Zar32]{Zar32}
O.~Zariski, \emph{On the topology of algebroid singularities}, Amer. J. Math. \textbf{54} (1932), no.~3, 453--465.

\bibitem[Zar86]{Zar86}
\bysame, \emph{Le problème des modules pour les branches planes}, Hermann, Paris, 1986.

\end{thebibliography}
\end{document}